\documentclass[12pt]{article}

\usepackage{style}
\usepackage{macros}
\usepackage{amsmath}
\usepackage{subfiles}

\usepackage[a4paper,margin=1.2in]{geometry}
\usepackage{fancyhdr}
\title{Two-dimensional Virasoro algebras}
\author{Zhengping Gui and Brian R. Williams}
\date{\today}

\newcommand{\SP}{\mathbb{P}^{S^1}}
\newcommand{\1}{{\textbf{\texttt{\tiny{1}}}}}
\newcommand{\2}{\textbf{\texttt{\tiny{2}}}}
\newcommand{\jj}{\textbf{\texttt{\tiny{j}}}}

\newcommand{\ii}{\textbf{\texttt{\tiny{i}}}}
\newcommand{\kkk}{\textbf{\texttt{\tiny{k}}}}
\newcommand{\rrr}{\textbf{\texttt{\tiny{r}}}}
\newcommand{\ttt}{\textbf{\texttt{\tiny{t}}}}
\newcommand{\s}{\textbf{\texttt{\tiny{s}}}}
\newcommand{\ddd}{\textbf{\texttt{\tiny{d}}}}
\newcommand{\smallhollowcolon}{%
  \vcenter{\offinterlineskip
    \ialign{##\cr
      \tiny$\circ$\cr
      \noalign{\kern0.4ex}
      \tiny$\circ$\cr
    }%
  }%
}

\newcommand{\dd}{\textbf{\texttt{\tiny{d}}}}

\newcommand{\tr}{\mathrm{tr}}

\newcommand{\permute}{\tau}

\usepackage{simpler-wick}
\usepackage{simpler-wick}
\usetikzlibrary{decorations.markings}
\tikzset{W->-/.style={decoration={
  markings,
  mark=at position 0.5*\pgfdecoratedpathlength+2pt with
  {\draw[-latex] (-2pt,0pt) -- (1pt,0pt);}},postaction={decorate}},
  W-<-/.style={decoration={
  markings,
  mark=at position 0.5*\pgfdecoratedpathlength with
  {\draw[latex-] (-2pt,0pt) -- (1pt,0pt);}},postaction={decorate}}
  }
\pgfkeys{
  /simplerwick/.cd,
  arrows/.store in=\LstWickArrows,
  arrows/.initial={-,-,-,-,-,-,-,-,-}, 
  arrows={-,-,-,-,-,-,-,-,-},
  positions/.store in=\LstWickPositions,
  positions={+1,+1,+1,+1,+1,+1,+1,+1,+1},
  positions/.initial={+1,+1,+1,+1,+1,+1,+1,+1,+1},
}

\makeatletter
\newcounter{Wick@up}
\newcounter{Wick@down}
\def\swick@end#1#2{
  \swick@setfalse@#1
  \tikzexternaldisable
  \begin{tikzpicture}[remember picture, baseline=(swick-close#1.base)]
    \node[use as bounding box, inner sep=0pt, outer sep=0pt] (swick-close#1) {$\displaystyle #2$};
  \end{tikzpicture}
  \tikz[remember picture, overlay]
{
\xdef\myW@style{\empty}
\foreach \W@X[count=\W@C] in \LstWickArrows
{\ifnum\W@C=#1
\xdef\myW@style{\W@X}
\fi}
\ifx\myW@style\empty
\PackageWarning{simpler-wick}{%
The list arrows has not enough entries!%
}{}
\xdef\myW@style{-}
\fi
\xdef\myW@pos{-77}
\foreach \W@X[count=\W@C] in \LstWickPositions
{\ifnum\W@C=#1
\xdef\myW@pos{\W@X}
\fi}
\ifnum\myW@pos=-77
\PackageWarning{simpler-wick}{%
The list positions has not enough entries!%
}{}
\xdef\myW@pos{+1}
\fi
\ifnum\myW@pos=-1
    \draw[\myW@style] ($(swick-open#1.south) + (0, -3pt)$) 
          -- ($(swick-open#1.base) + (0, -\swick@offset) + \theWick@down*(0, -\swick@sep)$) 
          -- ($(swick-close#1.base) + (0, -\swick@offset) + \theWick@down*(0, -\swick@sep)$) 
          -- ($(swick-close#1.south) + (0, -3pt)$);
\stepcounter{Wick@down}
\else
\stepcounter{Wick@up}
    \draw[\myW@style] ($(swick-open#1.north) + (0, 3pt)$) 
          -- ($(swick-open#1.base) + (0, \swick@offset) + \theWick@up*(0, \swick@sep)$) 
          -- ($(swick-close#1.base) + (0, \swick@offset) + \theWick@up*(0, \swick@sep)$) 
          -- ($(swick-close#1.north) + (0, 3pt)$);
\fi}
  \tikzexternalenable}
\def\wick@[#1]#2{\setcounter{Wick@up}{0}
\setcounter{Wick@down}{-1}
  \ifmmode
    \begingroup
    \pgfkeys{
        simplerwick,
        #1}
    \swick@cond@reset
    \swick@count=0
    \def\swick@max{0}
    \def\c{\swick@smart}
    #2
    \dimen0=\swick@sep
    \multiply\dimen0 by \swick@max
    \advance\dimen0 by \swick@offset
    \vbox to \dimen0{}
    \swick@cond@any{
      \PackageWarning{simpler-wick}{%
        I have reached the end of \protect\wick\space with some unclosed
        contractions%
      }{}
    }{}
    \endgroup
  \else
    \PackageWarning{simpler-wick}{%
      \protect\wich\space has been called outside a math environment, this will
      be ignore%
    }
  \fi
}

\makeatother

\begin{document}
\maketitle

\abstract{We classify central extensions of the dg Lie algebra of derived global sections of the tangent sheaf on the
punctured, formal $2$-disk.
We then prove a local, universal form of the Grothendieck--Riemann--Roch theorem for families of two-dimensional complex
varieties.}


\section*{Introduction}

The Virasoro algebra is ubiquitous in conformal field theory, and more generally representation theory as a whole.
Geometrically, the starting point is the \textit{Witt} Lie algebra: the Lie algebra of vector fields on the formal punctured disk.
The Virasoro algebra is the unique central extension of the Witt algebra.
In other words, representations of the Virasoro algebra are in bijective correspondence with projective representations
of the Witt algebra.
The action by this preferred central element $c \in \C$ is called the \textit{central charge} of the representation.
It is a powerful invariant in conformal field theory.

Our starting point is the punctured $d$-disk $\mathring{D}^d$ in dimension $d>1$.
The key feature is that $\mathring{D}^d$ is not an affine for $d>1$, so, in particular the Lie algebra of vector fields on
$\mathring{D}^d$ is equal to the Lie algebra of vector fields on the (unpunctured) $d$-disk:
\begin{equation}\label{}
  \lie{w}_d \cong \Gamma(\mathring{D}^d, \sT) .
\end{equation}
This Lie algebra certainly does not play the same role in dimension $d>1$ as the Witt algebra does in
dimension $d=1$.
For example, it is well-known that $H^2(\lie{w}_d) = 0$, for $d>1$, so this Lie algebra admits no nontrivial central extensions.
Following \cite{FHK,HKgf} we instead consider the \textit{derived} global sections of the tangent sheaf 
\begin{equation}\label{}
  \lie{witt}_d \simeq \R \Gamma(\mathring{D}^d,\sT) .
\end{equation}
We write equivalence here since in the main text, see section \ref{s:witt}, we construct an explicit dg Lie model for the higher-dimensional Witt algebra
using the ``Jouanolou method'' as in \cite{BD,FHK,GWWchiral}.

In \cite{FHK} the higher Kac--Moody algebras were introduced as central extensions of the derived global
sections of $\lie{g}$ multivariate currents $\R \Gamma(\mathring{D}^d, \lie{g}
\otimes \cO)$ where $\lie{g}$ is a Lie algebra.
Further, it was shown in \cite{GWkm} how such extensions appear as quantum \textit{symmetries} in higher-dimensional
holomorphic field theories which have $\lie{g}$ as a classical symmetry.
Likewise, central extensions of $\lie{witt}_d$ have been conjectured to appear as fundamental quantum symmetries in
higher-dimensional holomorphic field theory \cite{BWac,KapranovNotes}.
We call such central extensions \textit{higher-dimensional Virasoro algebras}.

More geometrically, the higher Virasoro algebras should be involved with a sort of \textit{uniformization} for the
moduli of complex structures in higher-dimensions.
This was worked out for the higher-dimensional Kac--Moody algebras and the moduli of $G$-bundles in \cite{FHK}.
In another direction, we have also shown how to realize such higher Virasoro algebras using higher-dimensional chiral
algebras \cite{GWWchiral}.

In this paper we take a (derived) algebraic approach to higher-dimensional Virasoro algebras.
Many of our constructions work for general dimension $d$, including our definition of the higher-dimensional Virasoro
algebras.
However, our main classification theorem applies only to the case $d=2$.
We treat the $d > 2$ cases of our main theorems in a future publication.

Our first result concerns the classification of central extensions, and hence the second Lie algebra (hyper)cohomology, of $\lie{witt}_d$.
As conjectured by Kapranov in \cite{KapranovNotes}, see also \cite{Wthesis,BWac}, it is expected that central extensions of the dg Lie algebra
$\lie{witt}_d$ are in bijective correspondence with degree $2d+2$ polynomials:
\begin{equation}\label{}
  \C[\op{ch}_1,\op{ch}_2,\ldots,\op{ch}_d]_{2d+2}
\end{equation}
where $c_i$ carries degree $2i$.
These classes have also appeared in \cite{BWvf,BWac}.
Our first main result produces the candidate classes explicitly, which holds for general dimension $d$.
When $d=2$ we show that these are, in fact, all of the classes up to equivalence.

\begin{globaltheorem}\label{thm:chern}
For any integer $d \geq 1$, there is an explicit homomorphism 
\begin{equation}\label{}
  \C[\op{ch}_1,\ldots,\op{ch}_d]_{2d+2} \to \HH^2_{Lie}(\lie{witt}_d)
\end{equation}
which is an isomorphism when $d=1,2$.
\end{globaltheorem}

Of course, the $d=1$ result is classical.
The $d=2$ result states that space of central extensions for $\lie{witt}_2$, up to
equivalence, is two-dimensional corresponding to basis vectors $\op{ch}_1^3, \op{ch}_1 \op{ch}_2$.
In general, the formulas we give for the cocycles which represent these universal characteristic classes are given as
explicit residues over the higher-dimensional sphere in punctured affine space. 
See section \ref{s:chern} for details.

From the construction of the map in theorem \ref{thm:chern}, we arrive at the definition of the higher-dimensional Virasoro algebra in any dimension,
though our result strictly only classifies them when $d=2$.
We expect that this map is an isomorphism for all $d$.

Our next main result is a local and universal version of the Grothendieck--Riemann--Roch theorem.
We briefly formulate this in dimension $d=1$, where this formula is well-known.
Suppose that $\lambda$ labels a $\lie{gl}(1)$ weight and let $\cK^\lambda$ be the corresponding coinduced $\op{witt}
_1$-module, denoted $\sfL_\lambda \colon \op{witt}_1 \to \op{End}(\cK^\lambda)$.
As $\cK^\lambda$ is a Tate vector space (as a vector space it is isomorphic to Laurent series $\C(\!(z)\!)$, the second
Lie algebra cohomology of $\op{End}(\cK^\lambda)$ is one-dimensional \cite{FHK}, with a distinguished generator that we
denote $\mathfrak{t}$ that is linearly dual to the element $\frac{1}{z} \wedge z$.
The local Grothendieck--Riemann--Roch theorem is a formula for the restriction of this universal class along the
representation $\sfL_\lambda$:
\begin{equation}\label{}
  \sfL_\lambda^* \mathfrak{t} = \frac12 (6 \lambda^2 - 6 \lambda + 1) c_1^2 \in \HH^2(\lie{witt}_1).
\end{equation}
This universal class $c_1^2$ reflects the formal version of the
Miller-Mumford-Morita class~$\kappa \in H^2 \left(\br \cM_g\right)$.

We replace $\lie{witt}_1$ with $\lie{witt}_2$ and $\cK^\lambda$ with the Jouanolou model of a tensor $\lie{gl}_2
$-module.
For simplicity, we take the trivial module as input, and so consider the natural $\lie{witt}_2$-module structure on the
Jouanolou model for the structure sheaf
\begin{equation}\label{}
  \cJ_2 \simeq \R \Gamma(\mathring{D}^2 , \cO) 
\end{equation}
  by derivations. (We point out the generalization to general dimension $d$ and general tensor modules in section \ref{s:2dGRR}, see
  also \cite{KapranovNotes}).
  Denote this module structure by $\sfL \colon \lie{witt}_2 \to \op{End}(\cJ_2)$.
  A theorem of \cite{FHK} states that $\cJ_2$ is a \textit{dg Tate} vector space and it follows that the
  second degree (hyper) Lie algebra cohomology of its endomorphisms is one-dimensional: 
  \begin{equation}\label{}
    \HH^2_{Lie}( \op{End} \cJ_2) \simeq \C .
  \end{equation}
  with a distinguished generator $\mathfrak{t}$ dual to $z^{\texttt{1}} \wedge z^{\texttt{2}} \wedge P$.
  Where $P$ denotes multiplication by the Jouanolou Bochner--Martinelli form which satisfies $\op{Res}(P \wedge \d z^1
  \wedge \d z^2) = 1$.
  \begin{globaltheorem}\label{thm:GRRglobal}
  One has 
  \begin{equation}
    \sfL^*\mathfrak{t} = \op{Td}|_{6}=\frac{1}{48} \left(\op{ch}_1^3 - 2 \op{ch}_1 \op{ch}_2\right)\in \HH^2_{Lie}(\lie{witt}_2) .
  \end{equation}
\end{globaltheorem}

We prove this theorem in sections \ref{s:2dgf} and \ref{s:2dGRR}.

\subsection*{Acknowledgements}
We thank Kevin Costello, Mikhail Kapranov, and Matt Szczesny for comments on the first draft of this paper.

\section{dg Witt algebra}\label{s:witt}

The Lie algebra of vector fields $\op{Der}(\C[z^\1,\ldots,z^{\ddd}])$ on $\AA^d$ is freely generated over $\C[z^\1,
\ldots,z^\ddd]$ by coordinate derivations
$\del_\1,\ldots,\del_\ddd$.
The variety $\AA^d - \{0\}$, on the other hand, is not affine and therefore taking global sections is not an exact
functor.
For example, the global sections of the tangent sheaf on $\AA^d - \{0\}$ is identical to the Lie algebra of vector
fields on $\AA^d$.
The cohomology of $\AA^d - \{0\}$ with coefficients in the tangent sheaf is concentrated in degrees zero and $d-1$.

In this section, we introduce an explicit dg model for the derived global sections of the tangent sheaf on punctured
affine space which we will refer to as the dg Witt Lie algebra.
When $d=1$ it recovers the ordinary Witt algebra $\op{Der} \C(\!(z)\!)$, but when $d>1$ it is necessarily a derived object.
While we will primarily focus on the case $d=2$ in subsequent sections, we will allow for general $d$ for most of this
section.

\subsection{Jouanolou models}

Refer to \cite[\S 1.2]{FHK} and \cite[\S 1]{GWWchiral} for notations.
Denote $\bJ_{\mathring{\AA}^d}(\sM)$ the \textit{Jouanolou model} of the sheaf $\sM$ on punctured $d$-dimensional affine space 
is the Jouanolou torsor for $\AA^d - \{0\}$ 
The complex $\bJ_{\mathring{\AA}^d}(\sM)$ is an explicit model for the derived global sections $\R \Gamma(\mathring{\AA}^d,
\sM)$ \cite{BD,FHK}.
We denote its internal differential by $\dbar$ (usually with the sheaf $\sM$ understood from context).

When $\sM = \sO$ denote this complex by $\cJ^{poly}_d$ (in \cite{FHK} this is denoted $A_{[d]}$).
As the Jouanolou construction is monoidal, $\cJ^{poly}_d$ is naturally a commutative dg algebra which is concentrated in degrees $0,\ldots,d-1$.

For any quasi-coherent sheaf $\sM$, the complex $\bJ_{\mathring{\AA}^d}(\sM)$ is a dg $\cJ^{poly}_d$-module.
When $\sM = \sT$, the tangent sheaf, we will denote 
\begin{equation}\label{}
  \lie{witt}^{poly}_d \define \bJ_{\mathring{\AA}^d}(\sT) .
\end{equation}
We will also denote the internal differential for the dg Lie algebra $\lie{witt}^{poly}_d$ by $\dbar$.

\begin{lemma}\label{lem:triv}
  As a dg $\cJ^{poly}_d$-module $\lie{witt}^{poly}_d$ is freely generated by $d$-generators $\{\del_\1,\ldots,\del_\ddd\}$.
\end{lemma}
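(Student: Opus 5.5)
The plan is to reduce the statement to two facts: the tangent sheaf of punctured affine space is a free $\sO$-module, and the Jouanolou construction is monoidal (in particular, compatible with direct sums and with the $\sO$-module structure).

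First, let $j \colon \mathring{\AA}^d \hookrightarrow \AA^d$ be the inclusion. The tangent sheaf of $\AA^d$ is free on the coordinate vector fields $\del_\1,\ldots,\del_\ddd$, and restricting along $j$ preserves this. So there is an isomorphism of quasi-coherent $\sO$-modules on $\mathring{\AA}^d$:
\begin{equation*}
  \sO^{\oplus d} \xrightarrow{\ \cong\ } \sT_{\mathring{\AA}^d}, \qquad (f_1,\ldots,f_d) \mapsto \textstyle\sum_i f_i \del_i .
\end{equation*}

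Second, apply the Jouanolou model, recalling how it is built (following \cite{FHK,GWWchiral}). Let $p \colon \mathcal{Y} \to \mathring{\AA}^d$ be the Jouanolou affine torsor, with affine total space. For a quasi-coherent $\sM$, the complex $\bJ_{\mathring{\AA}^d}(\sM)$ is the relative de Rham (Koszul-type) complex along the fibers of $p$ of the pullback $p^*\sM$. Equivalently, it is
\begin{equation*}
  \Gamma\bigl(\mathcal{Y}, p^*\sM \otimes_{\sO_{\mathcal{Y}}} \Omega^\bullet_{\mathcal{Y}/\mathring{\AA}^d}\bigr) \cong \Gamma(\mathring{\AA}^d,\sM) \otimes_{\Gamma(\sO_{\AA^d})} \cJ^{poly}_d ,
\end{equation*}
with differential $\dbar$ acting on the $\cJ^{poly}_d$ factor. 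This functor is additive and $\sO$-linear. The identification $p^*\sM\otimes\Omega^\bullet \cong p^*\sM \otimes_{p^*\sO} (p^*\sO\otimes\Omega^\bullet)$ is exactly the statement that the construction is monoidal and that $\bJ(\sM)$ is a dg $\cJ^{poly}_d$-module. Applying $\bJ$ to the isomorphism above therefore gives an isomorphism of dg $\cJ^{poly}_d$-modules
\begin{equation*}
  \bigoplus_{i=1}^d \cJ^{poly}_d \cdot \del_i \xrightarrow{\ \cong\ } \lie{witt}^{poly}_d .
\end{equation*}

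Third, check that the differential is compatible with this basis, namely $\dbar(\del_i) = 0$. This holds because the $\del_i$ are pulled back from global sections on $\AA^d$. Consequently $\dbar(\sum_i f_i \del_i) = \sum_i (\dbar f_i)\del_i$, and the generators form a free basis in the dg sense, not merely at the level of graded modules.

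The step requiring the most care is this last compatibility. One must verify that the Jouanolou differential on $p^*\sT$ is induced by the fiberwise de Rham differential using the canonical flat structure on the pullback of a sheaf from the base, so that base sections such as $\del_i$ are horizontal. This follows directly from the explicit model in \cite[\S 1.2]{FHK}. In that model $\bJ(\sM)$ is $M \otimes_{\C[z]} \cJ^{poly}_d$ suitably localized, with $\dbar$ acting only on the second factor. I would write this model out for $\sM=\sT$ with $M = \bigoplus_i \C[z^\1,\ldots,z^\ddd]\del_i$, which makes both the freeness and the closedness of the $\del_i$ transparent.
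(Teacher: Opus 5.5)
Your proof is correct and is essentially the paper's argument. The paper invokes the explicit model of \cite[\S 1.2]{FHK} to write each degree-$q$ element uniquely as $\sum_i\sum_{|I|=q}\alpha^i_I(z,z^*)\,\d z^{*I}\,\del_i$, which is your identification $\lie{witt}^{poly}_d\cong\Gamma(\AA^d,\sT)\otimes_{\C[z]}\cJ^{poly}_d=\bigoplus_i\cJ^{poly}_d\,\del_i$ written in coordinates. Your third step is not the delicate point but is automatic: applying $\bJ_{\mathring{\AA}^d}$ to the trivialization $\sO^{\oplus d}\cong\sT$ already gives a chain isomorphism, so the images $\del_i$ of the unit sections are $\dbar$-closed.
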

\begin{proof}
  Denote by $\del_\jj = \frac{\del}{\del z^\jj}$, $\jj=1,\ldots,d$ a basis for the tangent space.
  It follows from the same logic as \cite[\S 1.2]{FHK}, a degree $q$ element of $\lie{witt}^{poly}_d$ is of the form 
  \begin{equation}\label{eq:expansion}
   \sum_i \sum_{|I|=q} \alpha_{I}^\ii(z,z^*) \d z^{*I} \del_\ii
  \end{equation}
  where $\alpha_I^\ii \in \C[z,z^*][(zz^*)^{-1}]$, $\d z^{*I} = \d z^{*\ii_1} \cdots \d z^{*\ii_k}$, and the sum is over $\ii=1,\ldots,
  d$ and multi-indices $I = (\ii_1 \leq \ii_1 < \ldots < \ii_d\leq d$). 
\end{proof}


We will mostly be considering objects on the \textit{formal} $d$-disk.
Let $\lie{witt}_d$ denote the formal completion of $\lie{witt}_d^{poly}$ obtained by letting the coefficients
  $\alpha_I^\ii(z,z^*)$ in the expression \eqref{eq:expansion} be valued in 
  \begin{equation}\label{}
    \alpha_I^\ii \in \C\llbracket z\rrbracket[z^*][(zz^*)^{-1}]
  \end{equation}
Similarly, for the Jouanolou model of the structure sheaf, we let $\cJ_d$ be the formal completion of $\sJ^{poly}_d$
($\cJ_d$ is denoted $A_d$ in \cite{FHK}).
The cochain complex $\lie{witt}_d$ is a $\cJ_d$-module in the obvious way.
We will give an even more explicit description of $\lie{witt}_d$ in the case $d=2$ at the end of this section.

\subsection{Graded Jacobi bracket}
Abstractly, any model for the derived global sections of the tangent sheaf can be endowed with the structure of a homotopy Lie algebra.
For $\lie{witt}_d$, the following explicit bracket endows it with the structure of a dg Lie algebra.

\begin{lemma}
  Together with the internal Jouanolou differential $\dbar$, the formula
\begin{multline}
  \left[\alpha_{I,\ii} (z,z^*) \d x^I \del_\ii, \beta_{J,\jj} (z,z^*) \d z^{*J}  \del_{\jj} \right]
  \\ = \left(\alpha_{I,\ii}(z,z^*) \frac{\del \beta_{J,\jj}}{\del z^\ii}(z,z^*)\; \del_\jj - (-1)^{|I|} \beta_{J,\jj}(z,
z^*) \frac{\del \alpha}{\del z^\jj} (z,z^*) \; \del_\ii\right) \d z^{*I} \d z^{*J} 
\end{multline}
endows $\lie{witt}_d$ with the structure of a dg Lie algebra.
It is compatible with the Jacobi bracket of holomorphic sections of the tangent sheaf~$\sT$.
\end{lemma}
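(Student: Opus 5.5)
The bracket is the Lie bracket of vector fields extended over the graded-commutative coefficient algebra, so the plan is to identify $\lie{witt}_d$ with a base change of the Witt Lie algebra and read off the dg Lie axioms from standard facts. By Lemma \ref{lem:triv} (and its formal completion), $\lie{witt}_d \cong \cJ_d \otimes_{\C[\![z]\!]} \op{Der}(\C[\![z]\!])$. This is a free $\cJ_d$-module on $\del_\1,\ldots,\del_\ddd$. Recall that $\cJ_d$ is the graded-commutative algebra generated over $\C[\![z]\!][z^*][(zz^*)^{-1}]$ by the odd elements $\d z^{*\jj}$. Each $\del_\ii$ acts on $\cJ_d$ as the even derivation $\del/\del z^\ii$, applied coefficientwise: it differentiates in $z$ only and leaves $z^*$ and $\d z^{*}$ untouched. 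The proposed formula is then exactly the graded commutator of derivations of $\cJ_d$. Indeed, $X = \alpha\, \d z^{*I}\del_\ii$ defines the derivation $f \mapsto \alpha\, \d z^{*I}\, \del_\ii f$ of degree $|I|$. Computing $X\circ Y - (-1)^{|X||Y|} Y\circ X$ kills all second-order terms and leaves the stated expression. The sign $(-1)^{|I|}$ arises from moving $\d z^{*J}$ past $\d z^{*I}$, combined with the Koszul sign.

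The key steps, in order, are as follows.

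\textbf{Graded Lie algebra.} I would show that the formula lands in $\op{Der}(\cJ_d)$ and agrees with the graded commutator there. Then graded antisymmetry and the graded Jacobi identity follow immediately, because graded derivations of any graded-commutative algebra form a graded Lie algebra under the graded commutator. One must also check closure: the image is the $\cJ_d$-span of the $\del_\ii$, i.e. these derivations are determined by their values on $z^\1,\ldots,z^\ddd$. This is immediate from the formula.

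\textbf{Compatibility with $\dbar$.} I must show $\dbar$ is a derivation of the bracket. Write $\dbar = \sum_\jj \d z^{*\jj}\, \del/\del z^{*\jj}$, acting on the coefficients of $\lie{witt}_d$ and fixing $\del_\ii$. In Jouanolou coordinates the variables $z$ and $z^*$ are independent, so $\del/\del z^{*\jj}$ commutes with $\del/\del z^{\ii}$. Hence $\dbar$, viewed on $\cJ_d$, supercommutes with every coefficientwise $\del_\ii$. The differential on $\lie{witt}_d$ is then $\op{ad}_{\dbar}$ restricted to our subspace of $\op{Der}(\cJ_d)$: $[\dbar, \alpha\,\del_\ii] = \dbar(\alpha)\,\del_\ii$ because $[\dbar,\del_\ii]=0$. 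Since $\op{ad}$ of an odd derivation is a derivation of the graded commutator, the Leibniz rule follows, and $\dbar^2=0$ is already known.

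\textbf{Compatibility with $\sT$.} In degree $0$, with no $z^*$-dependence, the formula reduces to $[\alpha\del_\ii,\beta\del_\jj] = \alpha\del_\ii\beta\,\del_\jj - \beta\del_\jj\alpha\,\del_\ii$, the usual bracket. More precisely, the inclusion of $\C[\![z]\!]$-valued (or holomorphic) vector fields as $\dbar$-closed degree-zero elements is a Lie map. The same holds on each Jouanolou chart, since the construction is compatible with the sheaf structure.

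The main obstacle I expect is the sign bookkeeping together with the claim $[\dbar,\del_\ii]=0$. The latter needs the convention that $\del_\ii$ differentiates only the $z$-variables at fixed $z^*$, including in the localized factor $(zz^*)^{-1}$. I would verify it on generators $z^\jj$, $z^{*\jj}$, $(zz^*)^{-1}$ and $\d z^{*\jj}$, where it is a direct check. Note also that the displayed formula has typos ($\d x^I$, and $\del\alpha$ missing its indices), which I would read as $\d z^{*I}$ and $\del\alpha_{I,\ii}/\del z^\jj$.
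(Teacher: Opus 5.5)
Your strategy is the right one: realize $\alpha\,\mathrm{d}z^{*I}\partial_i$ as the derivation $f\mapsto\alpha\,\mathrm{d}z^{*I}\,\partial_i f$ of $\cJ_d$, then transport antisymmetry, Jacobi and the Leibniz rule from the graded commutator. This is also the picture the paper relies on in Proposition~\ref{prop:derivations}; the paper states this lemma without proof.

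However, your identification step is false as stated. You claim that $X\circ Y-(-1)^{|X||Y|}Y\circ X$ reproduces the displayed formula, with $(-1)^{|I|}$ coming from reordering the forms plus the Koszul sign. Do the computation for $X=\alpha\,\mathrm{d}z^{*I}\partial_i$ and $Y=\beta\,\mathrm{d}z^{*J}\partial_j$. The first-order term of $Y\circ X$ is $\beta\,\partial_j\alpha\,\mathrm{d}z^{*J}\mathrm{d}z^{*I}\,\partial_i$. Reordering the forms costs $(-1)^{|I||J|}$, and this cancels exactly against the Koszul sign $(-1)^{|I||J|}$. Hence
\[
[X,Y]=\bigl(\alpha\,\partial_i\beta\;\partial_j-\beta\,\partial_j\alpha\;\partial_i\bigr)\,\mathrm{d}z^{*I}\mathrm{d}z^{*J},
\]
with no $(-1)^{|I|}$. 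For $|I|$ odd the displayed bracket is therefore not the commutator, and nothing you transport from $\mathrm{Der}(\cJ_d)$ applies to it.

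Worse, the literal formula is not a graded Lie bracket at all. Take $X=\alpha\,\mathrm{d}z^{*I}\partial_1$ with $|I|$ odd and $Y=\beta\,\partial_1$ of degree $0$. The formula gives $[X,Y]+[Y,X]=2\beta\,\partial_1\alpha\,\mathrm{d}z^{*I}\partial_1$. This is nonzero for $\alpha=\beta=z^1$, $I=\{1\}$, whereas graded antisymmetry forces $[X,Y]=-[Y,X]$ here. So the statement cannot be proved as written, and your argument breaks precisely at the sign you did not check.

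The repair is to treat $(-1)^{|I|}$ as a third typo, alongside $\mathrm{d}x^I$ and the unindexed $\partial\alpha$. Then prove the lemma for the bracket displayed above; equivalently, take the second term to be $-(-1)^{|I||J|}\beta\,\partial_j\alpha\,\partial_i\,\mathrm{d}z^{*J}\mathrm{d}z^{*I}$. This is the bracket the paper actually uses later. In the Injectivity subsection it writes $[P\partial_1,(z^1)^2\partial_1]=2z^1P\,\partial_1-(z^1)^2\,\partial_1P\,\partial_1$, where the literal formula would give a plus sign.

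With the corrected bracket, the rest of your argument is complete:
\begin{itemize}
\item A derivation $\sum_i\omega^i\partial_i$ is recovered from its values $\omega^j$ on $z^j$. So $\lie{witt}_d\to\mathrm{Der}(\cJ_d)$ is injective, which transports antisymmetry and Jacobi.
\item $[\dbar,\partial_i]=0$ identifies the differential with $\mathrm{ad}_{\dbar}$, which gives the Leibniz rule.
\item The degree-zero, $z^*$-independent part is the usual bracket of vector fields.
\end{itemize}
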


\begin{definition}[]
\label{dfn:witt}
With this bracket and differential we refer to the dg Lie algebra $\lie{witt}_d$ as the $d$-dimensional \defterm{Witt
dg Lie algebra}. 
\end{definition}

On the other hand, letting the symbols~$\del_\jj$ act by vector fields, we see that $\lie{witt}_d$ acts on $\cJ_d$ by
derivations.
This leads to the following, purely algebraic, interpretation of the dg Witt algebra.

\begin{proposition}
\label{prop:derivations}
 There is a quasi-isomorphism of dg Lie algebras 
 \begin{equation}\label{}
   \lie{witt}^{poly}_d \simeq \op{Der}(\cJ^{poly}_d)
 \end{equation}
 and similarly $\lie{witt}_d \simeq \op{Der}(\cJ_d)$. 
\end{proposition}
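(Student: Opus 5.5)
We construct an explicit map of dg Lie algebras
\[
\Phi \colon \lie{witt}^{poly}_d \to \op{Der}(\cJ^{poly}_d)
\]
and show that it is a quasi-isomorphism. The formal case $\lie{witt}_d \to \op{Der}(\cJ_d)$ is handled in the same way.

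\textbf{Step 1: the map.} Recall that $\cJ^{poly}_d$ is generated as a cdga by $z^\ii$, $z^{*\ii}$, $(zz^*)^{-1}$ and $\d z^{*\ii}$. Its Jouanolou differential $\dbar$ is determined by $\dbar z^{*\ii} = \d z^{*\ii}$ and $\dbar z^\ii = 0$, together with the constraint coming from $\sum_\ii z^\ii z^{*}_\ii$. An element $\alpha\, \d z^{*I}\del_\ii$ is sent to the derivation $\alpha\,\d z^{*I}\,\del/\del z^\ii$, where the partial derivative acts only on the $z$-variables and kills $z^*$ and $\d z^*$. Concretely, on the localized ring we take the unique derivation extending $z^\jj \mapsto \delta^\jj_\ii$ and $z^{*},\d z^{*}\mapsto 0$.

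Three properties then need checking.
\begin{itemize}
\item The map commutes with $\dbar$. This holds because $\dbar$ differentiates only in $z^*$ and $\del_\ii$ differentiates only in $z$, so the relevant graded commutator vanishes.
\item The map preserves brackets. Comparing with the formula in the preceding lemma, this reduces to the standard computation of commutators of derivations.
\item The map is injective. By Lemma \ref{lem:triv}, $\lie{witt}^{poly}_d$ is free on the $\del_\ii$, and a derivation is detected by its values on the $z^\jj$.
\end{itemize}

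\textbf{Step 2: the quasi-isomorphism.} A derivation $D$ of $\cJ^{poly}_d$ is determined by the values $D(z^\ii)$, $D(z^{*\ii})$ and $D(\d z^{*\ii})$. It is therefore natural to view $\cJ^{poly}_d$ as the Jouanolou model of $\AA^d-\{0\}$ pulled back along the affine torsor $\pi \colon Y\to \AA^d-\{0\}$. Under this identification, $\op{Der}(\cJ^{poly}_d)$ is the complex of global sections of the tangent complex of $Y$, extended by the de Rham-type generators $\d z^*$.

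This gives a short exact sequence of dg modules
\[
0\to \op{Der}_{\cJ\text{-vert}} \to \op{Der}(\cJ^{poly}_d)\to \cJ^{poly}_d\otimes \mathrm{span}\{\del_\ii\}\to 0 ,
\]
where the first term consists of derivations killing the $z^\jj$. The quotient map restricts to an isomorphism on the image of $\Phi$, because sending $D\mapsto (D z^\ii)_\ii$ splits it. It therefore suffices to prove that the vertical part $\op{Der}_{\cJ\text{-vert}}$ is acyclic.

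Vertical derivations are determined by their values on $z^{*\ii}$ and $\d z^{*\ii}$, subject to the Jouanolou relation. Up to that relation, the pairs $(z^{*\ii},\d z^{*\ii})$ are Koszul/de Rham pairs with $\dbar z^* = \d z^*$. The induced differential on the vertical part is then of the form $D\mapsto [\dbar, D]$, and it pairs $D(z^*)$ with $D(\d z^*)$ exactly. This is the familiar fact that derivations of a free contractible pair $(x, \d x)$ form a contractible complex. We would write down the explicit contracting homotopy $h(D)=\,$(the derivation sending $z^{*\ii}\mapsto \pm D(\d z^{*\ii})$ and $\d z^{*\ii}\mapsto 0$) and verify that $[\dbar,h]=\mathrm{id}$ on vertical derivations.

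\textbf{Main obstacle.} The hardest point is the Jouanolou relation $\sum z^\ii z^*_\ii$ (equivalently, the inversion of $zz^*$). Because of it the generators $z^*$ are not free, so the vertical derivations must satisfy a compatibility condition, and the homotopy $h$ has to preserve it. The first approach is to work on the torsor $Y$, which is affine, and use the fact that the fibre directions of $\pi$ form an affine bundle. Vertical derivations then become sections of $\pi^*$ of the relative tangent bundle, extended by the $\d z^*$-direction. The homotopy is constructed fibrewise there, which respects the relation automatically. If that fails, a fallback is to filter by $\d z^*$-degree and compare associated graded pieces using the Koszul resolution.

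Finally, everything above is $\C[z]$-linear in the appropriate sense and compatible with completion in $z$. Replacing $\C[z]$ by $\C\llbracket z\rrbracket$ throughout therefore gives the formal statement $\lie{witt}_d\simeq\op{Der}(\cJ_d)$.
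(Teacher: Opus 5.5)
Your strategy is the paper's: use the natural map $\lie{witt}^{poly}_d\to\op{Der}(\cJ^{poly}_d)$, and kill the complementary fibre-direction derivations with Cartan's formula $[\dbar,\del_{\d z^{*j}}]=\del_{z^{*j}}$. Organizing this through $q\colon D\mapsto (Dz^i)_i$ is tidy. It is a chain map because $\dbar z^i=0$, and $q\circ\Phi=\mathrm{id}$, so $\op{Der}(\cJ^{poly}_d)\cong\op{im}\Phi\oplus\ker q$ as complexes.

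\textbf{The homotopy has the wrong degree.} This is the step that carries the proof, and it fails as written. If $|D|=k$, the derivation sending $z^{*i}\mapsto\pm D(\d z^{*i})$ and $\d z^{*i}\mapsto 0$ has degree $k+1$. Then $dh+hd$ (with $d=[\dbar,-]$) raises degree by two and cannot be the identity. Up to sign, your $h$ is just the piece of $d$ that turns contractions into Lie derivatives. The contracting homotopy must go the other way: $h(D)$ kills the fibre coordinates and sends their differentials to $\pm D(\text{coordinate})$. Concretely, $h(f\,\del_{z^{*j}})=(-1)^{|f|}f\,\del_{\d z^{*j}}$ and $h(g\,\del_{\d z^{*j}})=0$, which is exactly the paper's Cartan argument.

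\textbf{The obstacle you flag goes away in the right presentation.} The paper also passes over this point. The $z^{*i}$ are not elements of the Jouanolou model: they have weight one under rescaling $z^*$, and only weight-zero basic combinations such as $x^i=z^{*i}/(zz^*)$ lie in it. So write $\cJ=\cJ^{poly}_d$ and use the presentation the paper gives explicitly for $d=2$:
$\cJ=\C[z][x^1,\ldots,x^d,\dbar x^1,\ldots,\dbar x^d]/\bigl(\sum_i z^ix^i-1,\ \sum_i z^i\dbar x^i\bigr)$.
In this presentation:
\begin{itemize}
\item $\Phi(\del_i)$ is the derivation with $z^j\mapsto\delta_i^j$, $x^j\mapsto -x^ix^j$ and $\dbar x^j\mapsto-\dbar(x^ix^j)$. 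Note $x^j\mapsto 0$ would violate the relation.
\item A degree $k$ derivation with $D(z^i)=0$ is exactly a pair $u=(Dx^i)_i\in(\cJ^k)^{\oplus d}$, $v=(D\dbar x^i)_i\in(\cJ^{k+1})^{\oplus d}$ with $\sum_i z^iu^i=0=\sum_i z^iv^i$.
\item The differential becomes $(u,v)\mapsto(\dbar u-(-1)^kv,\ \dbar v)$. So $\ker q$ is the cone of the identity on the dg module $\ker\bigl(z\cdot\colon\cJ^{\oplus d}\to\cJ\bigr)$.
\item The map $h(u,v)=(0,(-1)^ku)$ satisfies $dh+hd=\mathrm{id}$. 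It visibly respects the constraints, since they are $\C[z]$-linear in the fibre generators.
\end{itemize}

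\textbf{Formal case.} For $\lie{witt}_d\simeq\op{Der}(\cJ_d)$, $\op{Der}$ must mean continuous derivations. Already $\C(\!(z)\!)$ has nonzero $\C$-derivations vanishing on $z$. With that reading, the same computation runs verbatim over $\C\llbracket z\rrbracket$.
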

\begin{proof}
  From the explicit presentation of $\cJ^{poly}_d$ in \cite{FHK} we see that the $\sJ_d^{poly}$-module $\op{Der}
  (\cJ^{poly}_d)$ is generated by degree zero derivations $\del_\1,\ldots,\del_\ddd, \del_{z^{*\1}}, \ldots, \del_{z^{*\ddd}}$ and
  degree $-1$ derivations
  derivations $\del_{\d z^{*\1}}, \ldots, \del_{\d z^{*\ddd}}$.
  Therefore, there is a natural map 
  \begin{equation}\label{}
    \lie{witt}_d^{poly} \to \op{Der}(\cJ_d^{poly}).
  \end{equation}
  To see that this is a quasi-isomorphism we use Cartan's magic formula 
  \begin{equation}\label{}
    [\dbar , \del_{\d z^{*\jj}}] = \del_{z^{*\jj}} .
  \end{equation}
  The second claim follows from taking completions.
\end{proof}

Similarly, for any sheaf of $\sT$-modules $\sM$, the Jouanolou model $\bJ_{\mathring{\AA}^d}(\sM)$ is a dg $\lie{witt}
_d$-module.
(These are the ``tensor" modules.)

\subsection{Gelfand--Fuks cohomology}

For a dg Lie algebra $\lie{g}$ we denote $C^\bu(\lie{g})$ its Chevalley--Eilenberg complex computing Lie algebra
cohomology with trivial coefficients.
If $M$ is a dg $\lie{g}$-module then let $C^\bu(\lie{g};M)$ be cohomology of $\lie{g}$ with coefficients in $M$.
This latter complex has the differential of the form 
\begin{equation}\label{}
  \left(\d_{\lie{g}} + \d_M\right) +  \d^{\rm Lie}
\end{equation}
where $\d_{\lie{g}}$ is the linear dual of the internal differential of the dg Lie algebra $\lie{g}$, $\d_M$ is the
internal differential for the dg module $M$, and $\d^{\rm Lie}$ is
Chevalley and Eilenberg's differential defined on generators by
\begin{itemize}
  \item For $v \in M$, $\d^{\rm Lie} v (x) = x \cdot v$.
  \item For $\varphi \colon \lie{g} \to M$, $\d^{\rm Lie}\varphi (x,y) = \varphi([x,y]) + x \cdot \varphi(y) - (-1)
    ^{|x||y|} y \cdot \varphi(x)$.
  \end{itemize}
We will denote the total cohomology of this complex by $\HH_{\rm Lie}^\bu(\lie{g} ; M)$.
Consider the increasing fitration by graded symmetric polynomial degree on $C^\bu(\lie{g};M)$.
This defines a spectral sequence which converges to the total cohomology.
Observe that the first two differentials $(\d_{\lie{g}}+\d_M)$ preserve the grading by graded symmetric degree, thus
the $E_1$ page of this spectral sequence is simply
\begin{equation}\label{}
  H^\bu \left( \op{Hom} (\op{S}(\lie{g}[1], M) \;\; , \;\; \d_{\lie{g}} + \d_{M} )\right) .
\end{equation}

Central extensions (extensions by $\C$) of the dg Lie algebra are parametrized by the total degree two cohomology
\begin{equation}\label{}
  \HH_{\rm Lie}^2(\lie{g}) .
\end{equation}
Since $\lie{g}$ is equipped with its own internal grading, total degree two classes may be represented by non-quadratic
cocycles in which case we present the central extension most efficiently as an $L_\infty$-algebra.
In the next section we will characterize central extensions of the dg Witt algebra by computing
\begin{equation}\label{}
  \HH^2_{\rm Lie}(\lie{witt}_d) .
\end{equation}
In dimension $d$ we find nontrivial cocycle representatives which are homogenous $(d+1)$-linear.

Gelfand and Fuks study the continuous Lie algebra cohomology of the Lie algebra of formal vector fields on the $d$-disk \cite{GF,
FuksBook}.
We will denote this lie algebra by~$\lie{w}_d$.
Notice that it is naturally a Lie subalgebra of the dg Lie algebra $\lie{witt}_d$.

\subsection{Explicit presentation $d=2$}

We unpack the presentation of $\lie{witt}_2$.
First, we recall the description of $\cJ_2$ following \cite[\S 1]{GWWchiral}.
See \cite{FHK} for original definitions.

The degree zero part of the dg algebra $\cJ_2$ is 
\[
  \cJ_2^0 = \C\llbracket z^\1, z^\2\rrbracket[x^\1,x^\2] \slash (z^\1 x^\1 + z^\2 x^\2 = 1) 
\]
which involves two formal variables and two polynomial variables subject to the relation. By substituting $x^{\ii}=\frac{z^{*\ii}}{zz^*}=\frac{z^{*\ii}}{z^{\1}z^{*\1}+z^{\2}z^{*\2}}$, we get the original definition in \cite{FHK}. The degree one part $\cJ_2^1$ is generated over $\cJ_2^0$ by a single element that we denote $P \in \cJ_2^1$.
We have $\cJ_2^k = 0$ for $k > 2$.
The differential is prescribed on generators by 
\begin{align*}
  \dbar (z^\1) &= \dbar (z^\2) = 0 \\ 
  \dbar (x^\1) &= - z^\2 P, \quad \dbar (x^\2) = z^\1 P .
\end{align*}
In other words,
\[
  \cJ_2 = \C\llbracket z^\1, z^\2\rrbracket[x^\1,x^\2, \dbar (x^\1), \dbar (x^\2)] \slash (z^\1 x^\1 + z^\2 x^\2 = 1,z^\1 \dbar (x^\1) + z^\2 \dbar (x^\2) = 0),
\]
and the element $P$ corresponds to the Martinelli-Bochner form
\[
P=x^\1 \dbar (x^\2)-x^\2 \dbar (x^\1) =\frac{z^{*\1}dz^{*\2}-z^{*\2}dz^{*\1}}{(z^{\1}z^{*\1}+z^{\2}z^{*\2})}.
\]

The model $\cJ_2$ carries a natural action of differential operators 
\[
\del_{\ii}x^{\jj}=\del_{\ii}\left(\frac{z^{*\jj}}{z^{\1}z^{*\1}+z^{\2}z^{*\2}}\right)=-x^{\ii}\cdot x^{\jj},\quad \ii,\jj=1,2.
\]
By direct computation, we have
\[
(\del_{\1})^{k}(\del_{\2})^{\ell}P=(-1)^{k+\ell}\cdot (k+\ell+1)!\cdot (x^{\1})^k(x^{\2})^{\ell}\cdot P.
\]

The $z$-variables comprise the zeroth cohomology $H^0_{\dbar}(\cJ_2) = \C\llbracket z^\1,z^\2\rrbracket$ whereas the first cohomology can be
identified with 
\begin{equation}\label{}
  H^1_{\dbar}(\cJ_2) \cong \C[\del_\1,\del_\2] P .
\end{equation}
where we introduce the cohomology classes 
\[
  \del_\1^{(k)} \del_\2^{(\ell)} P \define \frac{(k+\ell+1)!}{k!\ell!}\cdot \left[(x^\1)^k (x^\2)^\ell P \right]_{\dbar} .\footnote{This is a small simplification of
    notation since $\lie{w}_2$, and hence the algebra of differential operators on the $2$-disk, acts on $H^1_{\dbar}(\cJ_2)$.
    With respect to this action one has $\del_\1^{(k)} \del_\2^{(\ell)} P = \frac{(-1)^{k+\ell}}{k! \ell!} \frac{\del^{k+\ell}P}
  {\del (z^\1)^k \del (z^\2)^\ell}$.} 
\]
In other words our notation $\del_\1^k \del_\2^\ell P$ is for the class representing the cocycle $(x^\1)^k (x^\2)^\ell P \in Z(\cJ_2^1)$.
The following lemma is easy to prove, and is one of the main advantages of using our particular model for punctured
affine space.
(The analogous statement is true for general dimension $d$.)

\begin{proposition}
  The Jouanolou model $\cJ_2$ is $GL_2$-equivariant, where the element $P$ is $GL_2$-invariant.
  Let $V$ be the $2$-dimensional fundamental representation of $GL_2$.
  The corresponding decomposition of cohomology is: 
  \begin{equation}\label{}
    H^0_{\dbar}(\cJ_2) \cong \Hat{\op{S}} (V^*) 
  \end{equation}
  where the hat denotes completed symmetric algebra, and  
  \begin{equation}\label{}
    H^1_{\dbar}(\cJ_2) \cong \op{S}(V) \otimes \wedge^2 V .
  \end{equation}
\end{proposition}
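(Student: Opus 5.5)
We first fix the $GL_2$-action. An element $g \in GL_2$ acts on the coordinates $z = (z^\1,z^\2)$ through the dual representation $V^*$, and contragrediently on the polynomial variables $x = (x^\1,x^\2)$: we set $x \mapsto (g^{-1})^{T} x$. With this choice the defining relation $z^\1 x^\1 + z^\2 x^\2 = 1$ is preserved, because the pairing $\langle z, x\rangle$ is invariant. We then extend the action to $\dbar(x^\ii)$ by requiring that it commute with $\dbar$, which we can do since $\dbar$ is $\C\llbracket z\rrbracket$-linear and is defined only in terms of the generators. The second relation $z^\1\dbar(x^\1)+z^\2\dbar(x^\2)=0$ is $\dbar$ of the first, so it is preserved as well. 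This gives equivariance of $\cJ_2$ as a dg algebra.

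For the invariance of $P$, write $P = x^\1\dbar(x^\2) - x^\2\dbar(x^\1) = x \wedge \dbar x$. Under $g$ this expression picks up the factor $\det((g^{-1})^T) = \det(g)^{-1}$. Since $P$ also pairs against $\d z^\1 \wedge \d z^\2$ in the residue normalization $\op{Res}(P\wedge \d z^\1\wedge\d z^\2)=1$, the correct statement is presumably that $P$ is invariant up to this determinant character, with $P\otimes \d z^\1\d z^\2$ genuinely invariant. We check this by a direct computation of the transformation of $x\wedge \dbar x$. We use the same determinant twist to interpret "$P$ is $GL_2$-invariant" and to account for the $\wedge^2 V$ factor below.

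For the cohomology, $H^0_{\dbar}(\cJ_2) = \C\llbracket z^\1,z^\2\rrbracket$ was recalled above, and this is precisely $\Hat{\op S}(V^*)$ as a $GL_2$-module because the $z^\ii$ span $V^*$. In degree one, $H^1_{\dbar}(\cJ_2)$ has basis given by the classes of $(x^\1)^k(x^\2)^\ell P$. The monomials of total degree $n=k+\ell$ in $x$ span $\op S^n(V)$, since $x$ transforms as $V$. It remains to check two things. First, the map $\op S(V)\otimes \C P \to H^1_{\dbar}$, $f(x)\mapsto [fP]$, must be equivariant. This is clear because it is defined by multiplication in an equivariant algebra and $\dbar$ is equivariant. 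Second, this map must be a bijection, which is the content of the stated identification $H^1_{\dbar}(\cJ_2)\cong\C[\del_\1,\del_\2]P$. Combining these with the determinant character carried by $P$ gives $H^1_{\dbar}(\cJ_2)\cong \op S(V)\otimes\wedge^2V$.

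The main obstacle is the sign and character bookkeeping in the last step: making sure the character carried by $P$ is exactly $\wedge^2 V$ rather than its dual. We settle this by testing on the diagonal torus $g=\mathrm{diag}(t_1,t_2)$. There $z^\ii$ has weight $t_\ii^{-1}$, $x^\ii$ has weight $t_\ii$, and hence $P$ has weight $t_1t_2$. This matches $\wedge^2V$ and also confirms $\op S(V)$ for the $x$-monomials.
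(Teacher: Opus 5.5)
Your argument is correct and is the direct verification the paper leaves to the reader (it calls the statement easy and gives no proof). You are also right that $P$ is not literally $GL_2$-invariant: it spans a copy of $\wedge^2 V$, and only $P\,\d z^\1 \d z^\2$ (or $P$ under $SL_2$) is invariant. The conclusion itself forces this, since $[P]\neq 0$ while $\op{S}(V)\otimes\wedge^2 V$ has no trivial summand.

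Do tidy the conventions, though. If the $z^\ii$ span $V^*$, preserving $z^\1 x^\1 + z^\2 x^\2 = 1$ forces $x \mapsto g x$ rather than $x\mapsto (g^{-1})^{T}x$, so $x\wedge\dbar x \mapsto \det(g)\, x\wedge \dbar x$ directly. Your factor $\det(g)^{-1}$ and your torus weights ($t_\ii$ for $x^\ii$) agree only once the first paragraph is reread as the coordinate substitution $z\mapsto gz$, i.e.\ the action of $(g^{-1})^{T}$.
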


Then, the residual graded commutative algebra structure on $H^\bu_{\dbar}(\cJ_2)$ is determined by the $\C\llbracket
z^1,z^2\rrbracket
$-module structure on $H^1_{\dbar}(\cJ_2)$.
It is given by 
\begin{equation}\label{}
  (z^\1)^{k_1} (z^\2)^{k_2} \cdot \left( \del_\1^{(\ell_1)} \del_\2^{(\ell_2)} P \right) =
  \begin{cases}
    \del_\1^{(\ell_1 - k_1)} \del_\2^{(\ell_2 - k_2)} P, & \ell_i \leq k_i \\
    0 & \text{else}
  \end{cases}
\end{equation}

We turn to the dg Witt Lie algebra $\lie{witt}_2$. 
As we showed, it is free over $\cJ_2$ on two generators 
\begin{equation}\label{}
  \lie{witt}_2 = \cJ_2 \otimes \C\{\del_\1,\del_\2\} .
\end{equation}
Thus, any element in $\lie{witt}_2$ has the form 
\begin{equation}\label{}
  T = \left(f^\1(z,x) + g^\1(z,x) P\right)\del_\1 + \left(f^\2(z,x) + g^\2(z,x)P\right) \del_\2 
\end{equation}
where $f^\1,f^\2,g^\1,g^\2 \in \cJ_2^0$.
Succinctly, we will write this vector field as $T = T^\ii \del_\ii$ where $T^\ii = f^\ii + g^\ii P$.
The differential $\dbar$ of $\lie{witt}_2$ applied to such an element is 
\begin{equation}\label{}
  \dbar T = (\dbar f^\1) \del_\1 + (\dbar f^\2) \del_\2 
\end{equation}
where $\dbar f^\1, \dbar f^\2$ are defined above.

As a corollary of the above $GL_2$-decomposition we obtain the similar result.\footnote{This proposition pertains to
  the internal, or $\dbar$-cohomology of $\lie{witt}_2$. And, notably, \textit{not} the Lie algebra cohomology of $\lie{witt}
_2$.}
\begin{corollary}
  The dg Lie algebra $\lie{witt}_2$ is $GL_2$-equivariant.
  The corresponding decomposition of cohomology is: 
  \begin{equation}\label{}
    H^0(\lie{witt}_2) \cong \Hat{\op{S}} (V^*) \otimes V 
  \end{equation}
  and 
  \begin{equation}\label{}
    H^1(\lie{witt}_2) \cong \op{S}(V) \otimes \wedge^2 V \otimes V
  \end{equation}
\end{corollary}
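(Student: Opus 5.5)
The corollary follows from the $GL_2$-equivariance of $\cJ_2$ in the preceding proposition, together with the freeness statement of Lemma \ref{lem:triv}. The plan is to show that the identification $\lie{witt}_2 = \cJ_2 \otimes \C\{\del_\1,\del_\2\}$ is $GL_2$-equivariant when $\C\{\del_\1,\del_\2\}$ is given the structure of $V$. We then use that $\dbar$ acts only on the $\cJ_2$ factor, so taking cohomology commutes with tensoring by this finite-dimensional representation.

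First, we fix the $GL_2$-action on $\lie{witt}_2$. The group $GL_2$ acts linearly on the coordinates: $z^\ii$ transforms in $V^*$, so $\del_\ii$ transforms in $V$. The auxiliary variables $x^\ii = z^{*\ii}/(zz^*)$ transform so that the relation $z^\1x^\1 + z^\2x^\2 = 1$ is preserved, which means the $x^\ii$ span a copy of $V$. We use the action already fixed on $\cJ_2$ in the proposition, and extend it to vector fields by the induced action. This makes the bracket equivariant, since the bracket of the previous lemma is built from coordinate derivatives $\del/\del z^\ii$ contracted against the $\del_\ii$, and such contractions are natural under linear coordinate changes. The differential $\dbar$ is equivariant because $\dbar T = (\dbar T^\ii)\del_\ii$ and $\dbar$ is equivariant on $\cJ_2$ by the proposition. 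It follows that $\lie{witt}_2$ is a $GL_2$-equivariant dg Lie algebra. Moreover, $\cJ_2 \otimes V \to \lie{witt}_2$, given by $f \otimes \del_\ii \mapsto f\del_\ii$, is an isomorphism of $GL_2$-equivariant complexes, where the complex structure on $\cJ_2 \otimes V$ is $\dbar \otimes 1$.

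Second, we compute cohomology. Since $V$ is finite-dimensional and carries zero differential, the Künneth formula gives
\begin{equation}\label{}
H^\bu(\lie{witt}_2) \cong H^\bu_{\dbar}(\cJ_2) \otimes V
\end{equation}
as $GL_2$-representations. Substituting $H^0_{\dbar}(\cJ_2) \cong \Hat{\op{S}}(V^*)$ and $H^1_{\dbar}(\cJ_2) \cong \op{S}(V)\otimes \wedge^2 V$ from the proposition yields both claimed isomorphisms. The completion causes no trouble, because tensoring with a finite-dimensional space commutes with completion.

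The only point needing care is the equivariance of the bracket, and in particular consistency of the $V$ versus $V^*$ conventions. We check this on generators: it suffices that $g \cdot (\del_\ii f) = (g\cdot \del_\ii)(g\cdot f)$ for $f \in \cJ_2$. This holds because the action of $\del_\ii$ on $x^\jj$, namely $\del_\ii x^\jj = -x^\ii x^\jj$, and on $P$ (which is $GL_2$-invariant up to the determinant character implicit in $\wedge^2 V$) is given by formulas covariant in the indices.
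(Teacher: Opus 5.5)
Your proposal is correct and is essentially the paper's argument. The paper deduces the corollary directly from the $GL_2$-decomposition of $H^\bullet_{\dbar}(\cJ_2)$ together with the free presentation $\lie{witt}_2 = \cJ_2 \otimes \C\{\del_\1,\del_\2\}$, with $\C\{\del_\1,\del_\2\}\cong V$, which is exactly your Künneth step. Your check of the $V$ versus $V^*$ conventions, and that $P$ transforms by the determinant character as $\wedge^2 V$ requires, usefully clarifies the paper's loose phrase ``$P$ is $GL_2$-invariant''.
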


In fact, we have the following stronger result.
We introduce the Lie algebra $\lie{w}_2$ of \textit{formal} vector fields on the $2$-disk studied in \cite{FuksBook},
for example.

\begin{lemma}[]
\label{prop:dbarwitt}
  There is an isomorphism of graded Lie algebras (concentrated in degrees zero and one)
  \begin{equation}\label{}
    H^\bu(\lie{witt}_2) \cong \lie{w}_2 \ltimes \sP [-1] 
  \end{equation}
  where the $\lie{w}_2$-module structure on 
  \begin{equation}\label{}
    \sP \define H^1_{\dbar} (\cJ_2) \otimes \C\{\del_\1,\del_\2\}
  \end{equation}
  is defined by  
  \begin{align*}\label{}
    (z^\1)^{a+1} (z^\2)^b \del_\1 \cdot \left(\del_\1^{(k)} \del_\2^{(\ell)} P \del_\1\right) & = -(k+a+2) \del_\1^{(k-a)} \del_\2^{(\ell-b)} P
    \del_\1 \\ 
    (z^\1)^{a+1} (z^\2)^b \del_\1 \cdot \left(\del_\1^{(k)} \del_\2^{(\ell)} P \del_\2\right) & = -(k+1) \del_\1^{(k-a)} \del_\2^{(\ell-b)}
    P \del_\2 - b \del_\1^{(k-a-1)} \del_\2^{(k-b+1)} P \del_\1 \\
    (z^\1)^{a} (z^\2)^{b+1} \del_\2 \cdot \left(\del_\1^{(k)} \del_\2^{(\ell)} P \del_\1\right) & = -(\ell+1) \del_\1^{(k-a)} \del_\2^{(\ell-b)}
    P \del_\1 - a \del_\1^{(k-a+1)} \del_\2^{(k-b-1)} P \del_\2 \\
    (z^\1)^{a} (z^\2)^{b+1} \del_\2 \cdot \left(\del_\1^{(k)} \del_\2^{(\ell)} P \del_\2\right) & = -(\ell+b+2) \del_\1^{(k-a)} \del_\2^{(\ell-b)} P
    \del_\2 . 
  \end{align*}
  In these expressions, $a,b,k,\ell \geq 0$ and we declare $\del_\1^{(m)} \del_\2^{(n)} P = 0$ if $m < 0$ or $n < 0$.
\end{lemma}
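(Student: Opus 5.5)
The plan is to compute the $\dbar$-cohomology of $\lie{witt}_2$ as a graded Lie algebra directly from the free $\cJ_2$-module description $\lie{witt}_2 = \cJ_2 \otimes \C\{\del_\1,\del_\2\}$ (Lemma \ref{lem:triv}) and the known cohomology of $\cJ_2$.

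\textbf{Cohomology as a graded vector space.} Since $\lie{witt}_2$ is free over $\cJ_2$ and $\dbar\del_\ii = 0$, we get $H^\bu(\lie{witt}_2) = H^\bu_{\dbar}(\cJ_2)\otimes \C\{\del_\1,\del_\2\}$. Hence $H^0 = \C\llbracket z^\1,z^\2\rrbracket\{\del_\1,\del_\2\} = \lie{w}_2$ and $H^1 = \sP$, placed in degree one. This is the corollary on the $GL_2$-decomposition.

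\textbf{Bracket structure.} Because the cohomology lives only in degrees $0$ and $1$, there are three brackets to determine.
\begin{itemize}
\item The $H^1\otimes H^1$ bracket lands in degree $2$, where $\cJ_2^2\otimes\C^2$ has no cohomology contributing (indeed $P\cdot P=0$ and products of two degree-one elements vanish in $\cJ_2$). So $\sP[-1]$ is abelian.
\item The $H^0\otimes H^0$ bracket is induced by the Jacobi bracket on $\C\llbracket z\rrbracket$-coefficients. It is the bracket of $\lie{w}_2$, because the inclusion $\lie{w}_2\hookrightarrow\lie{witt}_2$ is a Lie map and it hits every $\dbar$-cocycle class in degree $0$.
\item The mixed bracket is therefore the action of $\lie{w}_2$ on $\sP$. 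Restricted to $z$-only coefficients, the bracket formula reads
\[
[f^\ii\del_\ii,\ g^\jj P\,\del_\jj] = f^\ii\,\del_\ii(g^\jj P)\,\del_\jj - g^\jj P\,(\del_\jj f^\ii)\,\del_\ii .
\]
\end{itemize}
This yields the semidirect product structure, which is the content of the lemma apart from the explicit formulas.

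\textbf{Explicit formulas.} Take $X=(z^\1)^{a+1}(z^\2)^b\del_\1$ and a representative $c_{k\ell}(x^\1)^k(x^\2)^\ell P\,\del_\jj$ with $c_{k\ell}=\frac{(k+\ell+1)!}{k!\ell!}$.
\begin{itemize}
\item \textbf{Derivative term.} Use $\del_\1 x^\jj=-x^\1x^\jj$, together with the formula for $\del_\1^k\del_\2^\ell P$ (which shows $\del_\1 P=-2x^\1P$). Then $\del_\1\big((x^\1)^k(x^\2)^\ell P\big) = -(k+\ell+2)(x^\1)^{k+1}(x^\2)^\ell P$.
\item \textbf{Multiplication by the monomial.} Multiplying by $(z^\1)^{a+1}(z^\2)^b$ lowers the indices. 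To land back on a representative I will use the relation $z^\1x^\1+z^\2x^\2=1$ on cocycle classes, i.e.\ the module structure $(z^\1)^{k_1}(z^\2)^{k_2}\cdot\del^{(\ell_1)}_\1\del^{(\ell_2)}_\2P$ stated above.
\item \textbf{Second term.} This is $-\,\del_\jj\big((z^\1)^{a+1}(z^\2)^b\big)\,c_{k\ell}(x^\1)^k(x^\2)^\ell P\,\del_\1$, which is again multiplication by a monomial followed by the module action.
\end{itemize}
Tracking the normalizing constants $c_{k\ell}$ through these steps should give the coefficients $-(k+a+2)$, $-(k+1)$, $-b$, and so on. For example, with $\jj=1$ the constants combine as
\[
a+1 \;-\; (k+\ell+2)\tfrac{c_{k+1,\ell}}{c_{k\ell}}\cdot(\text{shift factors}),
\]
and this should simplify to $-(k+a+2)$ once we use $c_{k+1,\ell}/c_{k\ell}=\frac{k+\ell+2}{k+1}$ together with the effect of the module action on the normalized classes. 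The $\del_\2$ cases follow by symmetry $\1\leftrightarrow\2$.

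\textbf{Main obstacle.} I expect the main difficulty to be this bookkeeping. The module action is stated on the normalized divided-power classes, while the derivative term naturally produces unnormalized representatives. There is also a subtlety: $z^\ii x^\jj$ with $\ii\ne\jj$ is not simply a shift, and it is this mixing that produces the off-diagonal term $-b\,\del_\1^{(k-a-1)}\del_\2^{(\ell-b+1)}P\,\del_\1$. I would first verify the case $a=b=0$ (linear vector fields, the $\lie{gl}_2$ action), where the answer must match the $GL_2$-decomposition $\op{S}(V)\otimes\wedge^2V\otimes V$ and fixes normalizations. I would then induct on $a,b$ using the Jacobi identity in $\lie{w}_2$.
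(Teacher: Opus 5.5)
The paper states this lemma without proof, so there is nothing to compare against. Your structural half is fine. Freeness gives $H^\bu(\lie{witt}_2)=H^\bu_{\dbar}(\cJ_2)\otimes\C\{\del_\1,\del_\2\}$, and $H^0=\lie{w}_2$ as a Lie algebra. $\sP[-1]$ is abelian because $\cJ_2^2=0$. The action is $[f^\ii\del_\ii,gP\,\del_\jj]=f^\ii\del_\ii(gP)\,\del_\jj-gP\,(\del_\jj f^\ii)\,\del_\ii$.

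The explicit coefficients are the real content, however, and your bookkeeping as written does not produce them. In your sample combination the $a+1$ has the wrong sign, since the second half of the bracket contributes $-(a+1)$. The normalization ratio is also used upside down:
\[
\del_\1\bigl(c_{k\ell}(x^\1)^k(x^\2)^\ell P\bigr)=-(k+\ell+2)\,c_{k\ell}(x^\1)^{k+1}(x^\2)^\ell P=-(k+\ell+2)\frac{c_{k\ell}}{c_{k+1,\ell}}\,\del_\1^{(k+1)}\del_\2^{(\ell)}P=-(k+1)\,\del_\1^{(k+1)}\del_\2^{(\ell)}P .
\]
So you must divide by $c_{k+1,\ell}/c_{k\ell}=\frac{k+\ell+2}{k+1}$, not multiply. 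With unit ``shift factors'' your expression equals $a+1-\frac{(k+\ell+2)^2}{k+1}$, which is not $-(k+a+2)$.

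What closes the argument is the identity above, together with the fact that monomials act on the normalized classes by pure shifts with coefficient one. Note that the paper's condition there should read $k_i\le \ell_i$. You should prove this rather than quote it. From
\[
\dbar\bigl((x^\1)^k(x^\2)^{\ell+1}\bigr)=(\ell+1)z^\1(x^\1)^k(x^\2)^\ell P-k\,z^\2(x^\1)^{k-1}(x^\2)^{\ell+1}P
\]
and $z^\1x^\1+z^\2x^\2=1$, you get $z^\1(x^\1)^k(x^\2)^\ell P\sim\frac{k}{k+\ell+1}(x^\1)^{k-1}(x^\2)^\ell P$. That is, $z^\1\cdot\del_\1^{(k)}\del_\2^{(\ell)}P=\del_\1^{(k-1)}\del_\2^{(\ell)}P$. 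The case $k=0$ follows from $z^\1(x^\2)^\ell P=\frac{1}{\ell+1}\dbar (x^\2)^{\ell+1}$. The first formula is then $-(k+1)-(a+1)=-(k+a+2)$, and the other three follow the same way.

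Your diagnosis of the off-diagonal term is wrong. On cohomology there is no mixing from $z^\ii x^\jj$, because multiplication is always a pure shift. The term comes from $\del_\2 X^\1=b(z^\1)^{a+1}(z^\2)^{b-1}$ in the second half of the bracket, and it gives $-b\,\del_\1^{(k-a-1)}\del_\2^{(\ell-b+1)}P\,\del_\1$. The paper's $k-b+1$ is a typo, which you silently corrected.

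A cleaner packaging is to send $\del_\1^{(k)}\del_\2^{(\ell)}P\mapsto (z^\1)^{-k-1}(z^\2)^{-\ell-1}$, which is consistent with the paper's footnote. Then the action is just the Lie derivative on vector fields with local-cohomology coefficients. This also covers fields such as $(z^\2)^b\del_\1$, which the lemma does not list.

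Finally, the fallback of inducting from $a=b=0$ via the Jacobi identity cannot work. Brackets with $\lie{gl}_2$ preserve polynomial degree, so linear fields never reach the higher ones.
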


The continuous linear dual of the module 
\begin{equation}\label{}
  \sP = \op{S} (V) \otimes \wedge^2 V \otimes V 
\end{equation}
is more familiar.
As a $GL_2$-module the continuous dual is 
\begin{equation}\label{}
  \sP^* = \Hat{\op{S}} (V^*) \otimes \wedge^2 V^* \otimes V^* .
\end{equation}
Tracing the explicit module structure computed in the lemma, we see that this is the following tensor $\lie{w}_2$-module 
\begin{equation}\label{}
  \sP^* = \Omega^2 \otimes_{\C\llbracket z^\1,z^\2\rrbracket} \Omega^1 .
\end{equation}
Here, $\Omega^i$ denotes the $\C\llbracket z^\1,z^\2\rrbracket$-module of $i$-forms on the formal $2$-disk.

\subsection{Differential operators and the Weyl algebra}\label{s:weyl}

We will also make use of the dg algebra of polynomial differential operators on the formal, punctured $d$-disk.
Since the construction is very similar, we will be brief.
Let $\sD$ denote the sheaf of differential operators on $\AA^{d}-\{0\}$.
We denote the Jouanolou model of $\sD$:
\begin{equation}\label{}
  \cD^{poly}_d \define \bJ_{\mathring{\AA}^d} (\sD) .
\end{equation}
It is an explicit dg algebra model for $\R \Gamma(\mathring{\AA}^d, \sD)$.
As such, it is naturally equipped with the structure of a dg algebra.
We also have its completed version that we denote $\cD_{d}$.
Note that as a $\cJ_d$-module one has 
\begin{equation}\label{}
  \cD_d = \cJ_d \otimes \C[\del_\1,\dots,\del_\ddd] .
\end{equation}
The natural map $\lie{witt}_d \to \cD_d$ is a map of dg Lie algebras.
The $\dbar$-cohomology of $\cD_d$ is concentrated in degrees zero and $d-1$.

Let us turn to dimension $d=2$.
One has $H^0(\cD_d) = \fD_2$, the algebra of differential operators on the formal $2$-disk.
Thus $\fD_2 \cong \C\llbracket z^\1,z^\2\rrbracket [\del_\1,\del_\2]$ as $\C\llbracket z^\1,z^\2\rrbracket$-module.
We denote 
\begin{equation}\label{}
  \til \sP \define H^{1}_{\dbar}(\cD_2) .
\end{equation}
As a $GL_2$-module one has 
\begin{equation}\label{}
  \til \sP = \op{S}(V) \otimes \op{S}(\wedge^2 V \otimes V) .
\end{equation}
Its dual $\til \sP^*$ is the $\fD_2$-module coinduced from the (pro)tensor $GL_2$-module $\wedge^2 V^* \op{S}(V^*)$.
In other words
\begin{equation}\label{}
  \til \sP^* = \Omega^2 \otimes_{\C\llbracket z^\1,z^\2\rrbracket}\op{Sym}_{\C\llbracket z^\1,z^\2\rrbracket}  \left(\Omega^1\right) 
\end{equation}
where $\Omega^i$ denotes $i$-forms on the formal punctured disk and the symmetric algebra is over the ring of formal
power series.

Finally, we introduce the dg Weyl algebra associated to the higher-dimensional punctured disk.
This is a rather formal tool that we will use to port over the results of \cite{FFS} to prove the local, universal
version of the Grothendieck--Riemann--Roch theorem.
As a cochain complex the dg Weyl algebra $\cW_d$ is the same as $\cD_d$.
For the Weyl algebra we use the notations $z^\ii \leftrightarrow q^\ii$ and $\del_\ii \leftrightarrow p^{\ii}$.
The multiplication is the Moyal
$\star$-product:
\begin{equation}\label{}
  O' \star O'' \define m \circ \op{exp} \left(\frac12 (\del_p \otimes \del_q - \del_q \otimes \del_p ) \right) (O' \otimes
  O'') .
\end{equation}

\begin{lemma}
  There is a dg algebra isomorphism 
  \begin{equation}\label{}
    \sigma \colon \cD_d \xto{\cong} \cW_d
  \end{equation}
  defined as the formal symbol map 
  \begin{equation}\label{}
\sigma\left(f(z^{\1},...,z^{\dd})F(\partial_{\1},...,\partial_{\dd})\right) \define e^{-\frac{1}{2}\sum^d\limits_{\s=1}\partial_{p^{\s}}\partial_{q^{\s}}}(f(q^{\1},...,q^{\dd})F(p^{\1},...,p^{\dd})).
  \end{equation}
\end{lemma}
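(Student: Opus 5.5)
We need to prove that $\sigma$ is a dg algebra isomorphism $\cD_d \to \cW_d$. The plan is to reduce everything to the classical polynomial statement about normal-ordered symbols versus Moyal symbols, extended $\cJ_d$-linearly in the antiholomorphic and form directions.

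\emph{Step 1: $\sigma$ is a linear isomorphism.} We regard $\sigma$ as defined on the $\cJ_d$-module $\cD_d=\cJ_d\otimes\C[\del_\1,\dots,\del_\ddd]$. The operator $\del_{q^\s}$ acts only on the holomorphic variables $z^\s=q^\s$, through the derivation action of $\del_\s$ on $\cJ_d$ (so it also acts on the $x^\jj$ and on $P$). The exponent $N=-\frac12\sum_\s \del_{p^\s}\del_{q^\s}$ strictly lowers polynomial degree in $p$. Hence $e^{N}$ is well defined: on any element it is a finite sum. It is invertible with inverse $e^{-N}$, so $\sigma$ is bijective.

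\emph{Step 2: $\sigma$ commutes with $\dbar$.} The differential $\dbar$ on $\cD_d$ acts only on the $\cJ_d$-coefficients and kills $p$. On $\cJ_d$, $\dbar$ commutes with the derivations $\del_\s$. This holds because $\del_\s$ and $\dbar$ both come from the Jouanolou construction, and one can check it directly on generators: $\dbar\del_\ii x^\jj=\del_\ii\dbar x^\jj$, using $\del_\ii x^\jj=-x^\ii x^\jj$ and $\dbar x^\jj$ as given. Therefore $\dbar$ commutes with $N$, and hence with $\sigma$. The grading is preserved since $N$ has degree $0$.

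\emph{Step 3: multiplicativity.} This is the main step. The product on $\cD_d$ is composition of differential operators with $\cJ_d$-coefficients. Writing operators in normal-ordered form (functions left, $\del$'s right), the product is given by the normal-ordered star product
\[
O'\cdot O''=m\circ\exp\bigl(\del_p\otimes\del_q\bigr)(O'\otimes O''),
\]
which is the Leibniz rule $F(\del)\,g=\sum_\alpha \frac{1}{\alpha!}g^{(\alpha)}\,F^{(\alpha)}(\del)$. Because $\dbar$-forms and the variables $x,z^*$ are acted on only through the derivations $\del_\s$, this formula holds verbatim with graded-commutative coefficients in $\cJ_d$.

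It then remains to verify the standard identity
\[
e^{N}\circ m\circ e^{\del_p\otimes\del_q}=m\circ e^{\frac12(\del_p\otimes\del_q-\del_q\otimes\del_p)}\circ(e^{N}\otimes e^{N}).
\]
Write $\Delta=\del_p\otimes 1+1\otimes\del_p$ and similarly for $q$, so that $N\circ m=m\circ N_\Delta$ with
\[
N_\Delta=-\tfrac12(\del_p\otimes1+1\otimes\del_p)(\del_q\otimes1+1\otimes\del_q).
\]
Expanding gives $N_\Delta=N\otimes1+1\otimes N-\frac12(\del_p\otimes\del_q+\del_q\otimes\del_p)$. All of these operators commute, so the exponentials combine: $-\frac12(\del_p\otimes\del_q+\del_q\otimes\del_p)+\del_p\otimes\del_q$ equals the Moyal bidifferential $\frac12(\del_p\otimes\del_q-\del_q\otimes\del_p)$. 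This yields the identity, and hence $\sigma(O'O'')=\sigma(O')\star\sigma(O'')$.

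The expected obstacle is sign bookkeeping for graded coefficients, for example those involving $P$ of degree $1$. Since $\del_p,\del_q$ are even operators and the $p$'s are even, the Koszul signs are the same on both sides, so the computation goes through unchanged.
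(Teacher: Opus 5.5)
The paper states this lemma without proof; it is treated as the standard fact that the Weyl-ordered symbol intertwines composition with the Moyal product. So there is no argument of the authors' to compare with. Your proof is correct and is exactly that standard verification, extended $\cJ_d$-linearly.

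\begin{itemize}
\item Your key identity $e^{N}\circ m\circ e^{\del_p\otimes\del_q}=m\circ e^{\frac12(\del_p\otimes\del_q-\del_q\otimes\del_p)}\circ(e^{N}\otimes e^{N})$ holds with the paper's sign conventions. For $d=1$ it gives $p\star q-q\star p=1$, which matches $[\del,z]=1$. It also reproduces the formula $\sigma(T)=T-\frac12\op{div}(T)$ used later in the paper.

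\item Your reading of $\del_{q^\s}$ as the derivation action of $\del_\s$ on $\cJ_d$ is the one that formula requires. The only input from the Jouanolou model is that the $\del_\s$ act as mutually commuting even derivations of $\cJ_d$ that commute with $\dbar$. Your generator check of the last property is fine. For $d=2$ the check on $x^\jj$ uses $\del_\ii P=-2x^\ii P$ together with the relation $z^\1x^\1+z^\2x^\2=1$. On $\dbar x^\jj$ the property holds essentially by construction, since $\dbar^2=0$.

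\item One point is worth stating explicitly in Step~3: every exponential series there terminates on a given element. Each monomial in the exponents contains equally many $\del_p$'s and $\del_q$'s, and $\del_p\otimes 1$ and $1\otimes\del_p$ are locally nilpotent. So the recombination of exponentials is an identity between finite sums, and no completion issue arises from $\del_q$ not being nilpotent on $\cJ_d$.
\end{itemize}
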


\section{Universal characteristic classes}\label{s:chern}

In the context of chiral conformal field theory the Virasoro class in $H^2_{Lie}(\lie{witt}_1)$ Witt algebra is an
anomaly. 
Specifically, it represents the determinant line bundle of the corresponding complex of Cauchy--Riemann, or
$\dbar$-operators, over the moduli of Riemann surfaces.
In this section, we extend this relationship between characteristic classes and central extensions of the Witt algebra
to arbitrary dimension.
We refer to \cite{KapranovNotes} for further elaboration of the connection between characteristic classes and central
extensions of the Witt algebra.

We use these universal characteristic classes to construct explicit representatives for the cohomology $\HH^2_{Lie}(\lie{witt}_d)$, and hence explicit models for the resulting central
extensions of the $d$-dimensional Witt algebra.

Before getting into the detailed results and constructions, we briefly sketch the form of the two cocycles which give rise
to the two independent cohomology classes of $\lie{witt}_2$ advertised in the first paragraph.
In order to express these formulas succinctly, we recall that there is a natural residue map defined on the Jouanolou model we consider, which one can
identify differential geometrically as an integral over the real codimension one sphere in affine space (we recall this
in detail below).
The first cocycle representative has the form:
\begin{equation}\label{eq:ch13}
  (T_1^\ii \del_\ii,T_2^\jj \del_\jj,T_3^\kkk \del_\kkk) \mapsto \op{Res} \left(\del_\ii T_1^\ii \del \del_\jj T_2^\jj \del \del_\kkk T_3^\kkk\right)
\end{equation}
The other cocycle representative is similar; it takes the form:
\begin{equation}\label{eq:ch1ch2}
  (T_1^\ii \del_\ii,T_2^\jj \del_\jj,T_3^\kkk \del_\kkk) \mapsto \frac16 \sum_{\tau} (\pm) \op{Res} \left(\del_\ii T_{\tau(1)}
    ^\ii \del \del_\jj T_{\tau(2)}^\kkk \del \del_\kkk
  T_{\tau(3)}^\jj\right).
\end{equation}
In both of these expressions the sums over $\ii,\jj,\kkk=1,2$ are implicit.

\subsection{Cyclic cohomology and residue}
\label{s:res}
Recall that $\cJ_d$ is the Jouanolou model for the structure sheaf on punctured $d$-dimensional affine space.
On the commutative dg algebra $\cJ_d$, there is a natural $GL_d$-invariant, total degree $1$ cyclic cocycle which we
will denote by $\rho$.
It represents a nontrivial element in the cyclic cohomology of $\cJ_d$ \cite[1.5]{FHK}.
We recall its definition presently.

We consider $\rho$ as an explicit cocycle in Connes' (reduced) $\lambda$-complex $\br C_\bu^\lambda(\cJ_d)$, see
\cite{loday1984cyclic} for example.
The formula of the cocycle $\rho$ involves the residue
\begin{equation}\label{}
  \op{Res} \colon \Omega^d \otimes_{\sO} \cJ_d \to \C[-d+1]
\end{equation}
which is the unique $GL_d$-equivariant cochain map which satisfies $\op{Res}(P \d^d z) = 1$.
We emphasize that $\Omega^d \otimes_{\sO}\cJ_d$ is isomorphic, as a $\lie{witt}_d$-representation, to the Jouanolou model for the canonical bundle $\sK$ on punctured affine
space.
Define the cyclic cocycle~$\rho$ by the formula
\begin{equation}\label{eq:rho}
  \rho(f_0,\ldots,f_d) = \op{Res}(f_0 \del f_1 \cdots \del f_d) .
\end{equation}
We make the important observation that since $\cJ_d$ is commutative dg algebra the complex~$\br C^\lambda_\bu(\cJ_d)$ is,
itself, naturally a commutative dg algebra.
We recall it's product below.

As we have pointed out $\cJ_d$ is naturally a dg $\lie{witt}_d$-module.
This extends, by derivation, to a natural $\lie{witt}_d$-module structure on $\br C^\lambda_\bu(\cJ_d)$.
We will need the following lemma.

\begin{lemma}\label{lem:rho}
  The degree one cocycle $\rho \in \br C^\bu_\lambda(\cJ_d)^1$ is $\lie{witt}_d$-invariant.
\end{lemma}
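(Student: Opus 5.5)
The plan is to reduce $\lie{witt}_d$-invariance of $\rho$ to a single identity: the residue pairing vanishes on Lie derivatives. The action of $X\in\lie{witt}_d$ on a cochain $\phi$ in the $\lambda$-complex is
\[
(X\cdot \phi)(f_0,\ldots,f_d) = -\sum_{i}\pm\, \phi(f_0,\ldots, X f_i,\ldots, f_d).
\]
Invariance means this sum vanishes for $\phi=\rho$ and all homogeneous $X=T^\ii\del_\ii$ and $f_i\in\cJ_d$. If $\rho$ is only invariant up to a $\dbar$-exact term, that weaker statement suffices for the later use in cohomology. I will try to prove strict invariance.

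\textbf{Step 1: $X$ acts as a Lie derivative.} The derivation $X$ commutes with the de Rham operator $\del$ only up to a Lie-derivative correction. The form $\omega=f_0\,\del f_1\cdots\del f_d$ is a top holomorphic form with coefficients in $\cJ_d$, i.e.\ an element of $\Omega^d\otimes_{\sO}\cJ_d$. The Leibniz-type sum above is exactly $L_X\omega$, where $L_X=[\iota_X,\del]$ is Cartan's Lie derivative extended $\cJ_d$-linearly with Koszul signs. This rests on two facts:
\begin{itemize}
\item $L_X f = X f$ on functions;
\item $L_X\,\del f = \del(Xf)$, since $L_X$ commutes with $\del$.
\end{itemize}
Hence $\sum_i \pm\rho(\ldots,Xf_i,\ldots)=\op{Res}(L_X\omega)$. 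The signs must be tracked because $X$ and the $f_i$ may carry Jouanolou degree (powers of $P$, or $\dbar x$ terms). I will use that $\del$ acts only in the $z$-directions and commutes with $\dbar$ up to the standard sign.

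\textbf{Step 2: $\op{Res}\circ L_X=0$.} On top forms, $L_X\omega = \del(\iota_X\omega)$, because $\del\omega=0$ for degree reasons. So it suffices to show that $\op{Res}$ kills $\del$-exact top forms, i.e.\ $\op{Res}(\del_\ii(g)\,\d^dz)=0$ for all $g\in\cJ_d$. I will check this in two ways.
\begin{itemize}
\item \emph{Cohomology.} $\op{Res}$ factors through $\dbar$-cohomology $H^{d-1}(\Omega^d\otimes\cJ_d)$. By $GL_d$-equivariance this is spanned by the classes $\del^{(k)}P\,\d^dz$, and $\op{Res}$ picks out the $k=0$ coefficient. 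By the footnote formula $\del_\1^{(k)}\del_\2^{(\ell)}P \propto \del^{k+\ell}P/\del z^{k}\del z^{\ell}$, the derivative $\del_\ii$ of any class lands in classes with $k+\ell\geq 1$. The $z$-multiplication rules then show that $\del_\ii(\text{function}\cdot\text{class})$ never produces a $P$ term.
\item \emph{Geometry.} Alternatively, one can use the interpretation of $\op{Res}$ as integration over $S^{2d-1}$ and apply Stokes' theorem, using that $\del$ plus $\dbar$ is $d$ and that $\dbar$-exact terms integrate to zero.
\end{itemize}

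\textbf{Main obstacle.} I expect the hard part to be making Step 2 rigorous at the cochain level rather than just in cohomology, because $\del_\ii g$ for a non-closed $g$ need not be closed. Since $\op{Res}$ is a cochain map to $\C[-d+1]$, it vanishes on elements of the wrong degree and on $\dbar$-exact ones. The remaining case, degree $d-1$ with $g$ arbitrary, is where I need $\op{Res}(\del_\ii g\,\d^dz)=0$ directly.

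My first attempt is to use the explicit formula $\del_\ii x^\jj=-x^\ii x^\jj$. In $d=2$, the degree-$1$ elements are $g=h(z,x)P$, and the residue of $h P\,\d^2z$ is the constant term of $h$ after reduction. I will verify $\op{Res}(\del_\ii(hP)\,\d^2z)=0$ on monomials $z^a x^b P$, which is a finite check by homogeneity: only weight $0$ contributes. Then I will invoke $GL_d$-equivariance to extend to general $d$ via the analogous monomial computation.

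Finally, I will check that cyclic-symmetry compatibility is automatic, since $\rho$ was already a cyclic cocycle and the $\lie{witt}_d$-action commutes with the cyclic structure.
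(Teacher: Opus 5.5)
Your proposal is correct and is essentially the paper's argument. The paper integrates by parts term by term, using that $\op{Res}$ kills $\del$-exact forms, to rewrite $\op{Res}(X\cdot(f_0\del f_1\cdots\del f_d))$ as $\op{Res}(\iota_X(\del f_0\del f_1\cdots\del f_d))$; this vanishes because it is $\iota_X$ of a $(d+1)$-form, and it is exactly your Cartan identity $L_X\omega=\del\iota_X\omega\pm\iota_X\del\omega$ written out by hand. The paper takes $\op{Res}\circ\del=0$ for granted, whereas you justify it. Your cohomological argument for this works in any $d$, and the obstacle you anticipate does not arise: $\cJ_d$ vanishes above degree $d-1$, so every degree-$(d-1)$ element is $\dbar$-closed, and $\del_\ii$, which commutes with $\dbar$, descends to $H^{d-1}_{\dbar}$, where it strictly raises the index of the classes $\del^{(k)}P$.
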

\begin{proof}
  By definition,
    \begin{align*}
  &\mathrm{Res}\left(X \cdot (f_0 \del f_1 \cdots \del f_d)\right)\\
  &=\mathrm{Res}\left((\iota_X\partial f_0)\cdot\partial f_1\cdots\partial f_d))\right)+\sum^d_{i=1}\mathrm{Res}\left(f_0\cdots\partial (\iota_X\partial f_i)\cdots\partial f_d))\right)\\
  &=\mathrm{Res}\left((\iota_X\partial f_0)\cdot\partial f_1\cdots\partial f_d))\right)+\sum^d_{i=1}(-1)^{i-1}\mathrm{Res}\left(\partial f_0\cdots (\iota_X\partial f_i)\cdots\partial f_d))\right)\\
  &=\mathrm{Res}\left(\iota_X(\partial f_0\cdot\partial f_1\cdots\partial f_d)\right)=0.
    \end{align*}

\end{proof}

\subsection{Universal Chern classes}

The Atiyah class is a measure of curvature typically defined in algebro-geometric contexts.
There are formal, or cyclic versions \cite{Calaque2005,Caldararu2000}, which appear in the Hochschild--Kostant--Rosenberg isomorphism
\cite{CDSatiyah} as well as in the algebraic index theorem \cite{FFS}.
Our construction can be interpreted as a formal, universal version of such Atiyah classes.
(Really, we only make use of the traces of powers of this class, so we produce universal Chern characters.)

Concretely, to construct Lie algebra cohomology classes of $\lie{witt}_d$ with trivial coefficients we use two ingredients:
\begin{itemize}
  \item[(1)] The natural trace map from the cyclic cohomology of the dg algebra $\cJ_d$-valued matrices which uses the
    residue from above.
  \item[(2)] A universal cocycle
    \begin{equation}\label{}
      \lie{c} \in C^\bu(\lie{witt}_d ;\br C_\bu^\lambda(\lie{gl}_d(\cJ_d)))^1
    \end{equation}
    of total degree $+1$.
\end{itemize}

Consider the dg algebra $\lie{gl}_d(\cJ_d)$ of matrices valued in the commutative dg algebra~$\cJ_d$.
We will define a universal element $\til{\lie{c}}$ in the Chevalley--Eilenberg cochain complex of $\lie{witt}_d$ with coefficients in
$\br C_\bu^\lambda(\lie{gl}_d(\cJ_d))$.
This element is of total degree $+1$ and decomposes as
\begin{equation}\label{eq:c}
  \til{\lie{c}} = \til{\lie{c}}_1 + \til{\lie{c}}_2+ \cdots 
\end{equation}
where
\begin{equation}\label{}
  \til{\lie{c}}_{k+1} \colon \wedge^{k+1} \lie{witt}_d \to \br C_\bu^\lambda(\lie{gl}_d(\cJ_d))
\end{equation}
is defined by
\begin{align}
      \til{\lie{c}}_{k+1}(T_0\wedge\cdots\wedge T_{k}) &=\frac{(-1)^k}{2^k} \sum_{\permute \in S_k}(\pm)^{'} \cdot \left(J T_{0}, J T_{\permute(1)}, \ldots, J T_{\permute(k)}\right) \\
 & =\frac{(-1)^k}{2^k\cdot(k+1)} \sum_{\permute' \in S_{k+1}}(\pm)^{''}\cdot \left(J T_{\permute'(0)}, J T_{\permute'(1)}, \ldots, J T_{\permute'(k)}\right) 
\end{align}
where
$$
(\pm)'=\mathrm{sgn}(\permute)\cdot\varepsilon(\permute;T_1,\ldots,T_k) ,
$$
$$
(\pm)''=\mathrm{sgn}(\permute')\cdot\varepsilon(\permute';T_0,T_1,\ldots,T_k) ,
$$
and $J \colon \lie{witt}_d \to \lie{gl}_d(\cJ_d)$ is the \textit{Jacobian} matrix which to an element $T = \sum^d\limits_{\ii=1} f^\ii
\del_\ii$ assigns the $\cJ_d$-valued matrix
\begin{equation}\label{}
  (JT)_{\jj}^\kkk = \frac{\del f^\kkk}{\del z^\jj} \in \cJ_d .
\end{equation}
We observe that $\til{\lie{c}}$ is of total degree $+1$.

\begin{theorem}
  The element $\til{\lie{c}} = \sum\limits_{k\geq 0} \til{\lie{c}}_{k+1}=\til{\lie{c}}_1 + \til{\lie{c}}_2+ \cdots $ is a cocycle of total degree one.
\end{theorem}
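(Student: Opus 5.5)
The plan is to split the total differential on $C^\bu(\lie{witt}_d;\br C^\lambda_\bu(\lie{gl}_d(\cJ_d)))$ into three pieces: the internal piece $\d_{\rm int}$, induced by $\dbar$ on $\lie{witt}_d$ and on $\cJ_d$; the Chevalley--Eilenberg piece $\d^{\rm Lie}$, built from the bracket and the $\lie{witt}_d$-action on $\br C^\lambda_\bu$; and the Hochschild/cyclic differential $b$ on Connes' complex. We then check the cocycle equation arity by arity. Write $(\d_{\rm int}+\d^{\rm Lie}+b)\til{\lie c}$ and collect the components $\wedge^{k+1}\lie{witt}_d\to \br C^\lambda_\bu$. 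In arity $k+1$ three contributions appear: $\d_{\rm int}\til{\lie c}_{k+1}$, $b\,\til{\lie c}_{k+1}$, and $\d^{\rm Lie}\til{\lie c}_{k}$. We will show that these sum to zero.

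The first step is a structural identity for the Jacobian, viewed as a $1$-cochain $J\colon \lie{witt}_d\to\lie{gl}_d(\cJ_d)$. Since $\dbar$ commutes with $\del/\del z^\jj$, $J$ is a cochain map, so $\d_{\rm int}$ kills each tensor $(JT_0,\ldots,JT_k)$; hence $\d_{\rm int}\til{\lie c}=0$ term by term. Next, the Jacobian satisfies the ``crossed homomorphism'' (Atiyah-type) identity
\begin{equation*}
J[T,S] = T\cdot JS - (\pm) S\cdot JT + [JT,JS],
\end{equation*}
where $T\cdot(-)$ denotes the derivation action of $\lie{witt}_d$ on the entries of matrices, up to the sign and order convention for the matrix commutator. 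This is the chain rule, and it is what makes $J$ a $1$-cocycle with values in the $\lie{witt}_d$-module $\lie{gl}_d(\cJ_d)$ twisted by its own bracket. It should be verified directly from the bracket formula for $\lie{witt}_d$ given above.

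The second step computes $\d^{\rm Lie}\til{\lie c}_k$. The action part $T\cdot(JT_1,\ldots,JT_k)$ and the bracket part $(\ldots,J[T_i,T_j],\ldots)$ are combined using the identity above. The derivation terms $T\cdot JS$ cancel against the action of $T$ on the cyclic chain, because the action on $\br C^\lambda_\bu$ is by derivation in each slot. What survives are terms with a single slot replaced by the commutator $[JT_i,JT_j]$.

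The third step computes $b\,\til{\lie c}_{k+1}$. The Hochschild boundary of $(JT_0,\ldots,JT_k)$ multiplies adjacent entries. After antisymmetrising over the permutations, each adjacent product $JT_iJT_j$ pairs with $\mp JT_jJT_i$ to give a commutator. The normalisation $(-1)^k/2^k$ is then exactly what matches these terms against those from step two: each commutator appears twice from the $S_{k+1}$ symmetrisation, and the factor $\tfrac12$ per step in arity accounts for this. One must also check that the cyclic wrap-around term of $b$ is handled by the cyclic antisymmetrisation in the second formula for $\til{\lie c}_{k+1}$. The equality of the two displayed formulas for $\til{\lie c}_{k+1}$ (fixing $T_0$ versus symmetrising over all of $S_{k+1}$) follows from cyclic invariance in $C^\lambda$, and we use the cyclic form here. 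The lowest arity needs separate attention: $b\,\til{\lie c}_1=b(JT)$ is zero or degenerate in the reduced complex.

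The main obstacle I expect is the Koszul sign bookkeeping, namely the signs $\varepsilon(\tau;T)$ for graded elements of degrees $0$ and $1$ interacting with the signs of the cyclic operator and $b$. To contain this I would first do the computation for degree-zero $T_i$, where only $\mathrm{sgn}$ appears, and obtain the numerical identity $\d^{\rm Lie}\til{\lie c}_k + b\,\til{\lie c}_{k+1}=0$. The graded case then follows formally by the standard sign rule, that is, by working with the symmetric coalgebra $\op{S}(\lie{witt}_d[1])$ and treating $J$ as a degree-preserving map after shift. Throughout, Lemma \ref{lem:rho} is not needed; it will be used later, when pairing $\til{\lie c}$ with $\rho$ and traces.
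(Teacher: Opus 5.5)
Your outline is the paper's route. The $\dbar$-part vanishes because $J$ commutes with $\dbar$. The chain rule $J[T,S]=T\cdot JS-S\cdot JT-[JT,JS]$ (in the paper's matrix convention) makes the derivation terms of $\d^{\rm Lie}\til{\lie{c}}_k$ cancel against the action terms. The surviving commutator terms are then matched with $b\,\til{\lie{c}}_{k+1}$.

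The step that fails is the normalisation, which you treat as bookkeeping. There is no doubling on either side:
\begin{itemize}
\item In $b$ of the antisymmetrised chain, each adjacent product $JT_a\,JT_b$ occurs exactly once. It pairs with $-JT_b\,JT_a$ into a single commutator.
\item On the Lie side, each pair $\{a,b\}$ leaves a single $-[JT_a,JT_b]$. The two action terms $T_a\cdot JT_b$ and $T_b\cdot JT_a$ cancel exactly the two derivation terms inside $J[T_a,T_b]$.
\end{itemize}
So the commutator terms of $\d^{\rm Lie}\til{\lie{c}}_k$ carry the coefficient $c_k$ of $\til{\lie{c}}_k$, and those of $b\,\til{\lie{c}}_{k+1}$ carry $c_{k+1}$. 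Cancellation therefore forces $c_{k+1}=\pm c_k$, not a ratio of $\tfrac12$.

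This already fails in arity two. Take the Chevalley--Eilenberg sign for which the derivation terms cancel. Then $\d^{\rm Lie}\til{\lie{c}}_1(T_0,T_1)=T_0\cdot JT_1-T_1\cdot JT_0-J[T_0,T_1]=[JT_0,JT_1]$. On the other side, $b\,\til{\lie{c}}_2(T_0,T_1)=-\tfrac12\, b(JT_0,JT_1)=-\tfrac12[JT_0,JT_1]$. For $T_0=z^\2\del_\1$ and $T_1=z^\1\del_\2$ one has $JT_0=E_{12}$ and $JT_1=E_{21}$, so $[JT_0,JT_1]=E_{11}-E_{22}$. This is nonzero in $\br C^\lambda_0$, since it is not a multiple of the identity. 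No choice of signs makes the arity-two component vanish.

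What your argument actually proves is the statement for the renormalised element
\[
\til{\lie{c}}_{k+1}=(-1)^k\sum_{\tau\in S_k}(\pm)'\,(JT_0,JT_{\tau(1)},\ldots,JT_{\tau(k)}),
\]
that is, without the factors $2^{-k}$. There is a cleaner way to see this, which also disposes of the Koszul signs you postpone. The chain rule says exactly $\d^{\rm Lie}J=J\cdot J$ in the dg algebra $C^\bu(\lie{witt}_d;\lie{gl}_d(\cJ_d))$, so $\pm J$ is a Maurer--Cartan element. The renormalised $\til{\lie{c}}$ is the image of its cyclic Chern character $\sum_{n\geq 1}\pm\tfrac{1}{n}\,J^{\otimes n}$, which is a cyclic cycle for standard reasons.

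The paper's own display has the same slip. It uses $(-2)^{-k}$ as the coefficient of $\til{\lie{c}}_k$, whereas the definition gives $(-2)^{-(k-1)}$; this is where the factor of two silently disappears. You should either drop the $2^{-k}$, and track the resulting rescaling of $\op{Tr}\lie{c}_k$ in the later Chern and Todd computations, or exhibit a convention that genuinely produces the extra $\tfrac12$. The heuristic that each commutator is counted twice does not supply it.
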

\begin{proof}
    \begin{figure}[htbp]
        \centering

\tikzset{every picture/.style={line width=0.75pt}} 

\begin{tikzpicture}[x=0.75pt,y=0.75pt,yscale=-1,xscale=1]

\draw   (181,78) .. controls (181,53.7) and (200.7,34) .. (225,34) .. controls (249.3,34) and (269,53.7) .. (269,78) .. controls (269,102.3) and (249.3,122) .. (225,122) .. controls (200.7,122) and (181,102.3) .. (181,78) -- cycle ;
\draw [color={rgb, 255:red, 189; green, 16; blue, 224 }  ,draw opacity=1 ]   (218,35) ;
\draw [shift={(218,35)}, rotate = 0] [color={rgb, 255:red, 189; green, 16; blue, 224 }  ,draw opacity=1 ][fill={rgb, 255:red, 189; green, 16; blue, 224 }  ,fill opacity=1 ][line width=0.75]      (0, 0) circle [x radius= 3.35, y radius= 3.35]   ;
\draw [color={rgb, 255:red, 189; green, 16; blue, 224 }  ,draw opacity=1 ]   (250,43) ;
\draw [shift={(250,43)}, rotate = 0] [color={rgb, 255:red, 189; green, 16; blue, 224 }  ,draw opacity=1 ][fill={rgb, 255:red, 189; green, 16; blue, 224 }  ,fill opacity=1 ][line width=0.75]      (0, 0) circle [x radius= 3.35, y radius= 3.35]   ;
\draw [color={rgb, 255:red, 189; green, 16; blue, 224 }  ,draw opacity=1 ]   (266,95) ;
\draw [shift={(266,95)}, rotate = 0] [color={rgb, 255:red, 189; green, 16; blue, 224 }  ,draw opacity=1 ][fill={rgb, 255:red, 189; green, 16; blue, 224 }  ,fill opacity=1 ][line width=0.75]      (0, 0) circle [x radius= 3.35, y radius= 3.35]   ;
\draw [color={rgb, 255:red, 189; green, 16; blue, 224 }  ,draw opacity=1 ]   (206,117) ;
\draw [shift={(206,117)}, rotate = 0] [color={rgb, 255:red, 189; green, 16; blue, 224 }  ,draw opacity=1 ][fill={rgb, 255:red, 189; green, 16; blue, 224 }  ,fill opacity=1 ][line width=0.75]      (0, 0) circle [x radius= 3.35, y radius= 3.35]   ;
\draw [color={rgb, 255:red, 189; green, 16; blue, 224 }  ,draw opacity=1 ]   (256,110) ;
\draw [shift={(256,110)}, rotate = 0] [color={rgb, 255:red, 189; green, 16; blue, 224 }  ,draw opacity=1 ][fill={rgb, 255:red, 189; green, 16; blue, 224 }  ,fill opacity=1 ][line width=0.75]      (0, 0) circle [x radius= 3.35, y radius= 3.35]   ;
\draw    (247,122.91) .. controls (253.58,124.79) and (258.39,122.26) .. (265.59,115.37) ;
\draw [shift={(267,114)}, rotate = 135.31] [fill={rgb, 255:red, 0; green, 0; blue, 0 }  ][line width=0.08]  [draw opacity=0] (12,-3) -- (0,0) -- (12,3) -- cycle    ;
\draw    (277,88.91) .. controls (279.83,97.42) and (278.2,97.89) .. (272.95,107.2) ;
\draw [shift={(272,108.91)}, rotate = 298.61] [fill={rgb, 255:red, 0; green, 0; blue, 0 }  ][line width=0.08]  [draw opacity=0] (12,-3) -- (0,0) -- (12,3) -- cycle    ;
\draw   (178,237) .. controls (178,212.7) and (197.7,193) .. (222,193) .. controls (246.3,193) and (266,212.7) .. (266,237) .. controls (266,261.3) and (246.3,281) .. (222,281) .. controls (197.7,281) and (178,261.3) .. (178,237) -- cycle ;
\draw [color={rgb, 255:red, 189; green, 16; blue, 224 }  ,draw opacity=1 ]   (215,194) ;
\draw [shift={(215,194)}, rotate = 0] [color={rgb, 255:red, 189; green, 16; blue, 224 }  ,draw opacity=1 ][fill={rgb, 255:red, 189; green, 16; blue, 224 }  ,fill opacity=1 ][line width=0.75]      (0, 0) circle [x radius= 3.35, y radius= 3.35]   ;
\draw [color={rgb, 255:red, 189; green, 16; blue, 224 }  ,draw opacity=1 ]   (247,202) ;
\draw [shift={(247,202)}, rotate = 0] [color={rgb, 255:red, 189; green, 16; blue, 224 }  ,draw opacity=1 ][fill={rgb, 255:red, 189; green, 16; blue, 224 }  ,fill opacity=1 ][line width=0.75]      (0, 0) circle [x radius= 3.35, y radius= 3.35]   ;
\draw [color={rgb, 255:red, 189; green, 16; blue, 224 }  ,draw opacity=1 ]   (263,254) ;
\draw [shift={(263,254)}, rotate = 0] [color={rgb, 255:red, 189; green, 16; blue, 224 }  ,draw opacity=1 ][fill={rgb, 255:red, 189; green, 16; blue, 224 }  ,fill opacity=1 ][line width=0.75]      (0, 0) circle [x radius= 3.35, y radius= 3.35]   ;
\draw [color={rgb, 255:red, 189; green, 16; blue, 224 }  ,draw opacity=1 ]   (203,276) ;
\draw [shift={(203,276)}, rotate = 0] [color={rgb, 255:red, 189; green, 16; blue, 224 }  ,draw opacity=1 ][fill={rgb, 255:red, 189; green, 16; blue, 224 }  ,fill opacity=1 ][line width=0.75]      (0, 0) circle [x radius= 3.35, y radius= 3.35]   ;
\draw [color={rgb, 255:red, 189; green, 16; blue, 224 }  ,draw opacity=1 ]   (253,269) ;
\draw [shift={(253,269)}, rotate = 0] [color={rgb, 255:red, 189; green, 16; blue, 224 }  ,draw opacity=1 ][fill={rgb, 255:red, 189; green, 16; blue, 224 }  ,fill opacity=1 ][line width=0.75]      (0, 0) circle [x radius= 3.35, y radius= 3.35]   ;
\draw    (244,281.91) .. controls (250.58,283.79) and (255.39,281.26) .. (262.59,274.37) ;
\draw [shift={(264,273)}, rotate = 135.31] [fill={rgb, 255:red, 0; green, 0; blue, 0 }  ][line width=0.08]  [draw opacity=0] (12,-3) -- (0,0) -- (12,3) -- cycle    ;
\draw    (274,247.91) .. controls (276.83,256.42) and (275.2,256.89) .. (269.95,266.2) ;
\draw [shift={(269,267.91)}, rotate = 298.61] [fill={rgb, 255:red, 0; green, 0; blue, 0 }  ][line width=0.08]  [draw opacity=0] (12,-3) -- (0,0) -- (12,3) -- cycle    ;
\draw   (62,402) .. controls (62,377.7) and (81.7,358) .. (106,358) .. controls (130.3,358) and (150,377.7) .. (150,402) .. controls (150,426.3) and (130.3,446) .. (106,446) .. controls (81.7,446) and (62,426.3) .. (62,402) -- cycle ;
\draw [color={rgb, 255:red, 189; green, 16; blue, 224 }  ,draw opacity=1 ]   (99,359) ;
\draw [shift={(99,359)}, rotate = 0] [color={rgb, 255:red, 189; green, 16; blue, 224 }  ,draw opacity=1 ][fill={rgb, 255:red, 189; green, 16; blue, 224 }  ,fill opacity=1 ][line width=0.75]      (0, 0) circle [x radius= 3.35, y radius= 3.35]   ;
\draw [color={rgb, 255:red, 189; green, 16; blue, 224 }  ,draw opacity=1 ]   (131,367) ;
\draw [shift={(131,367)}, rotate = 0] [color={rgb, 255:red, 189; green, 16; blue, 224 }  ,draw opacity=1 ][fill={rgb, 255:red, 189; green, 16; blue, 224 }  ,fill opacity=1 ][line width=0.75]      (0, 0) circle [x radius= 3.35, y radius= 3.35]   ;
\draw [color={rgb, 255:red, 189; green, 16; blue, 224 }  ,draw opacity=1 ]   (147,419) ;
\draw [shift={(147,419)}, rotate = 0] [color={rgb, 255:red, 189; green, 16; blue, 224 }  ,draw opacity=1 ][fill={rgb, 255:red, 189; green, 16; blue, 224 }  ,fill opacity=1 ][line width=0.75]      (0, 0) circle [x radius= 3.35, y radius= 3.35]   ;
\draw [color={rgb, 255:red, 189; green, 16; blue, 224 }  ,draw opacity=1 ]   (87,441) ;
\draw [shift={(87,441)}, rotate = 0] [color={rgb, 255:red, 189; green, 16; blue, 224 }  ,draw opacity=1 ][fill={rgb, 255:red, 189; green, 16; blue, 224 }  ,fill opacity=1 ][line width=0.75]      (0, 0) circle [x radius= 3.35, y radius= 3.35]   ;
\draw [color={rgb, 255:red, 189; green, 16; blue, 224 }  ,draw opacity=1 ]   (137,434) ;
\draw [shift={(137,434)}, rotate = 0] [color={rgb, 255:red, 189; green, 16; blue, 224 }  ,draw opacity=1 ][fill={rgb, 255:red, 189; green, 16; blue, 224 }  ,fill opacity=1 ][line width=0.75]      (0, 0) circle [x radius= 3.35, y radius= 3.35]   ;
\draw    (158,412.91) .. controls (160.84,421.42) and (159.2,421.89) .. (153.95,431.2) ;
\draw [shift={(153,432.91)}, rotate = 298.61] [fill={rgb, 255:red, 0; green, 0; blue, 0 }  ][line width=0.08]  [draw opacity=0] (12,-3) -- (0,0) -- (12,3) -- cycle    ;
\draw   (243,402) .. controls (243,377.7) and (262.7,358) .. (287,358) .. controls (311.3,358) and (331,377.7) .. (331,402) .. controls (331,426.3) and (311.3,446) .. (287,446) .. controls (262.7,446) and (243,426.3) .. (243,402) -- cycle ;
\draw [color={rgb, 255:red, 189; green, 16; blue, 224 }  ,draw opacity=1 ]   (280,359) ;
\draw [shift={(280,359)}, rotate = 0] [color={rgb, 255:red, 189; green, 16; blue, 224 }  ,draw opacity=1 ][fill={rgb, 255:red, 189; green, 16; blue, 224 }  ,fill opacity=1 ][line width=0.75]      (0, 0) circle [x radius= 3.35, y radius= 3.35]   ;
\draw [color={rgb, 255:red, 189; green, 16; blue, 224 }  ,draw opacity=1 ]   (312,367) ;
\draw [shift={(312,367)}, rotate = 0] [color={rgb, 255:red, 189; green, 16; blue, 224 }  ,draw opacity=1 ][fill={rgb, 255:red, 189; green, 16; blue, 224 }  ,fill opacity=1 ][line width=0.75]      (0, 0) circle [x radius= 3.35, y radius= 3.35]   ;
\draw [color={rgb, 255:red, 189; green, 16; blue, 224 }  ,draw opacity=1 ]   (328,419) ;
\draw [shift={(328,419)}, rotate = 0] [color={rgb, 255:red, 189; green, 16; blue, 224 }  ,draw opacity=1 ][fill={rgb, 255:red, 189; green, 16; blue, 224 }  ,fill opacity=1 ][line width=0.75]      (0, 0) circle [x radius= 3.35, y radius= 3.35]   ;
\draw [color={rgb, 255:red, 189; green, 16; blue, 224 }  ,draw opacity=1 ]   (268,441) ;
\draw [shift={(268,441)}, rotate = 0] [color={rgb, 255:red, 189; green, 16; blue, 224 }  ,draw opacity=1 ][fill={rgb, 255:red, 189; green, 16; blue, 224 }  ,fill opacity=1 ][line width=0.75]      (0, 0) circle [x radius= 3.35, y radius= 3.35]   ;
\draw [color={rgb, 255:red, 189; green, 16; blue, 224 }  ,draw opacity=1 ]   (318,434) ;
\draw [shift={(318,434)}, rotate = 0] [color={rgb, 255:red, 189; green, 16; blue, 224 }  ,draw opacity=1 ][fill={rgb, 255:red, 189; green, 16; blue, 224 }  ,fill opacity=1 ][line width=0.75]      (0, 0) circle [x radius= 3.35, y radius= 3.35]   ;
\draw    (309,446.91) .. controls (315.58,448.79) and (320.39,446.26) .. (327.59,439.37) ;
\draw [shift={(329,438)}, rotate = 135.31] [fill={rgb, 255:red, 0; green, 0; blue, 0 }  ][line width=0.08]  [draw opacity=0] (12,-3) -- (0,0) -- (12,3) -- cycle    ;
\draw [color={rgb, 255:red, 189; green, 16; blue, 224 }  ,draw opacity=1 ]   (190,51) ;
\draw [shift={(190,51)}, rotate = 0] [color={rgb, 255:red, 189; green, 16; blue, 224 }  ,draw opacity=1 ][fill={rgb, 255:red, 189; green, 16; blue, 224 }  ,fill opacity=1 ][line width=0.75]      (0, 0) circle [x radius= 3.35, y radius= 3.35]   ;
\draw [color={rgb, 255:red, 189; green, 16; blue, 224 }  ,draw opacity=1 ]   (189,208) ;
\draw [shift={(189,208)}, rotate = 0] [color={rgb, 255:red, 189; green, 16; blue, 224 }  ,draw opacity=1 ][fill={rgb, 255:red, 189; green, 16; blue, 224 }  ,fill opacity=1 ][line width=0.75]      (0, 0) circle [x radius= 3.35, y radius= 3.35]   ;
\draw [color={rgb, 255:red, 189; green, 16; blue, 224 }  ,draw opacity=1 ]   (72,374) ;
\draw [shift={(72,374)}, rotate = 0] [color={rgb, 255:red, 189; green, 16; blue, 224 }  ,draw opacity=1 ][fill={rgb, 255:red, 189; green, 16; blue, 224 }  ,fill opacity=1 ][line width=0.75]      (0, 0) circle [x radius= 3.35, y radius= 3.35]   ;
\draw [color={rgb, 255:red, 189; green, 16; blue, 224 }  ,draw opacity=1 ]   (253,374) ;
\draw [shift={(253,374)}, rotate = 0] [color={rgb, 255:red, 189; green, 16; blue, 224 }  ,draw opacity=1 ][fill={rgb, 255:red, 189; green, 16; blue, 224 }  ,fill opacity=1 ][line width=0.75]      (0, 0) circle [x radius= 3.35, y radius= 3.35]   ;

\draw (83,68.4) node [anchor=north west][inner sep=0.75pt]    {$b\til{\lie{c}}_{k+1} =$};
\draw (201,11.4) node [anchor=north west][inner sep=0.75pt]  [font=\scriptsize]  {$JT_{\permute ( 1)}$};
\draw (143,54.4) node [anchor=north west][inner sep=0.75pt]    {$\sum $};
\draw (251,22.4) node [anchor=north west][inner sep=0.75pt]  [font=\scriptsize]  {$JT_{\permute ( 2)}$};
\draw (174,121.4) node [anchor=north west][inner sep=0.75pt]  [font=\scriptsize]  {$JT_{\permute ( k)}$};
\draw (281,115.4) node [anchor=north west][inner sep=0.75pt]  [font=\scriptsize]  {$[ JT_{\permute ( i)} ,JT_{\permute ( j)}]$};
\draw (315,50.4) node [anchor=north west][inner sep=0.75pt]  [font=\small]  {$([ JT_{\permute ( i)} ,JT_{\permute ( j)}])_{\s}^{\rrr} =\frac{\partial T_{\permute ( i)}^{\rrr}}{\partial z^{\ttt}}\frac{\partial T_{\permute ( j)}^{\ttt}}{\partial z^{\s}} -\frac{\partial T_{\permute ( j)}^{\rrr}}{\partial z^{\ttt}}\frac{\partial T_{\permute ( i)}^{\ttt}}{\partial z^{\s}}$};
\draw (52,224.4) node [anchor=north west][inner sep=0.75pt]    {$\d^{\rm Lie}\til{\lie{c}}_{k} =$};
\draw (199,170.4) node [anchor=north west][inner sep=0.75pt]  [font=\scriptsize]  {$JT_{\permute ( 1)}$};
\draw (140,213.4) node [anchor=north west][inner sep=0.75pt]    {$\sum $};
\draw (248,181.4) node [anchor=north west][inner sep=0.75pt]  [font=\scriptsize]  {$JT_{\permute ( 2)}$};
\draw (171,280.4) node [anchor=north west][inner sep=0.75pt]  [font=\scriptsize]  {$JT_{\permute ( k)}$};
\draw (278,274.4) node [anchor=north west][inner sep=0.75pt]  [font=\scriptsize]  {$J[ T_{\permute ( i)} ,T_{\permute ( j)}]$};
\draw (322,197.4) node [anchor=north west][inner sep=0.75pt]  [font=\small]  {$( J[ T_{\permute ( i)} ,T_{\permute ( j)}])_{\s}^{\rrr} =\frac{\partial \left( T_{\permute ( i)}^{\ttt}\frac{\partial T_{\permute ( j)}^{\rrr}}{\partial z^{\ttt}} -T_{\permute ( j)}^{\ttt}\frac{\partial T_{\permute ( i)}^{\rrr}}{\partial z^{\ttt}}\right)}{\partial z^{\s}}$};
\draw (83,335.4) node [anchor=north west][inner sep=0.75pt]  [font=\scriptsize]  {$JT_{\permute ( 1)}$};
\draw (24,378.4) node [anchor=north west][inner sep=0.75pt]    {$+\sum $};
\draw (132,346.4) node [anchor=north west][inner sep=0.75pt]  [font=\scriptsize]  {$JT_{\permute ( 2)}$};
\draw (55,445.4) node [anchor=north west][inner sep=0.75pt]  [font=\scriptsize]  {$JT_{\permute ( k)}$};
\draw (139,437.4) node [anchor=north west][inner sep=0.75pt]  [font=\scriptsize]  {$T_{\permute ( i)}( JT_{\permute ( j)})$};
\draw (264,335.4) node [anchor=north west][inner sep=0.75pt]  [font=\scriptsize]  {$JT_{\permute ( 1)}$};
\draw (198,378.4) node [anchor=north west][inner sep=0.75pt]    {$+\sum $};
\draw (313,346.4) node [anchor=north west][inner sep=0.75pt]  [font=\scriptsize]  {$JT_{\permute ( 2)}$};
\draw (236,445.4) node [anchor=north west][inner sep=0.75pt]  [font=\scriptsize]  {$JT_{\permute ( k)}$};
\draw (330,422.4) node [anchor=north west][inner sep=0.75pt]  [font=\scriptsize]  {$T_{\permute ( j)}( JT_{\permute ( i)})$};
\draw (389,337.4) node [anchor=north west][inner sep=0.75pt]  [font=\small]  {$ \begin{array}{l}
T_{\permute ( i)}( JT_{\permute ( j)}) -T_{\permute ( j)}( JT_{\permute ( i)})\\
=T_{\permute ( i)}^{\ttt} \partial _{z^{\ttt}}\left(\frac{\partial T_{\permute ( j)}^{\rrr}}{\partial z^{\s}}\right) -T_{\permute ( j)}^{\ttt} \partial _{z^{\ttt}}\left(\frac{\partial T_{\permute ( i)}^{\rrr}}{\partial z^{\s}}\right)
\end{array}$};
\draw (155,29.4) node [anchor=north west][inner sep=0.75pt]  [font=\scriptsize]  {$JT_{\permute ( 0)}$};
\draw (153,190.4) node [anchor=north west][inner sep=0.75pt]  [font=\scriptsize]  {$JT_{\permute ( 0)}$};
\draw (38,356.4) node [anchor=north west][inner sep=0.75pt]  [font=\scriptsize]  {$JT_{\permute ( 0)}$};
\draw (219,356.4) node [anchor=north west][inner sep=0.75pt]  [font=\scriptsize]  {$JT_{\permute ( 0)}$};

\end{tikzpicture}
        \caption{$b\til{\lie{c}}_{k+1}$ and $\d^{\rm Lie}\til{\lie{c}}_{k}$.}
        \label{fig:bAndLie}
    \end{figure}
We follow the fig \ref{fig:bAndLie}. 
\begin{align*}
&b    \til{\lie{c}}_{k+1}(T_0\wedge\cdots\wedge T_{k})\\
& = \frac{(-1)^{k+1}}{2^{k+1}}\sum_{\permute \in S_k}(\pm)_{'}\cdot  b\left(J T_{0}, J T_{\permute(1)}, \ldots, J T_{\permute(k)}\right)\\
&=\frac{(-1)^{k+1}\cdot 2^{-k}}{(k+1)k}\sum_{\permute' \in S_{k+1}}\sum_{0\leq i<j\leq k}(\pm)_{''}\cdot \left([JT_{\permute'(i)},JT_{\permute'(j)}],JT_{\permute'(0)},\dots,\widehat{JT_{\permute'(i)}},\dots,\widehat{JT_{\permute'(j)}},\dots, JT_{\permute'(k)}\right)
\end{align*}
where
$$
(\pm)_{'}=\mathrm{sgn}(\permute)\cdot\varepsilon(\permute;T_1,\ldots,T_k) ,
$$
$$
(\pm)_{''}=\mathrm{sgn}(\permute')\cdot\varepsilon(\permute';T_0,T_1,\ldots,T_k)\cdot (-1)^{i+j-1}\cdot (-1)^{|T_{\permute'(i)}|\cdot\sum^{i-1}\limits_{p=0}|T_{\permute'(p)}|+|T_{\permute'(j)}|\cdot\sum^{j-1}\limits_{q=0,q\neq i}|T_{\permute'(q)}|} .
$$
On the other hand 
\begin{align*}
&\mathbf{d}^{\mathrm{Lie}}\til{\lie{c}}_{k}\left(T_0\wedge\cdots\wedge T_k\right)\\&=\til{\lie{c}}_{k}\left(\sum_{0\leq i<j\leq k}(\pm)_{\flat'}\cdot [T_i,T_j]\wedge T_0\wedge\cdots\wedge\widehat{T_i}\wedge\cdots\wedge\widehat{T_j}\wedge\cdots\wedge T_k\right)\\
&+\sum_{0\leq i\leq k}(\pm)_{\flat''}\cdot T_i\til{\lie{c}}_{k}\left(T_0\wedge\cdots\wedge\widehat{T_i}\wedge\cdots\wedge T_k\right)\\
&=(-2)^{-k}\sum_{0\leq i<j\leq k}\sum_{\permute\in S_{k-1}}(\pm)_{\natural'}\cdot\left(J[T_i,T_j], JT_{\permute(0)},\ldots,\widehat{JT_i},\ldots,\widehat{JT_j},\ldots, JT_{\permute(k)}\right)\\
&+\frac{(-2)^{-k}}{k}\sum_{0\leq i\leq k}\sum_{\til{\permute}\in S_{k}}(\pm)_{\natural''}\cdot T_i\left(JT_{\til{\permute}(0)},JT_{\til{\permute}(1)},\ldots,\widehat{JT_i},\ldots, JT_{\til{\permute}(k)}\right)\\
&=\frac{(-2)^{-k}}{(k+1)k}\sum_{\permute' \in S_{k+1}}\sum_{0\leq i<j\leq k}(\pm)_{'}\cdot\left(J[T_{\permute'(i)},T_{\permute'(j)}],JT_{\permute'(0)},\dots,\widehat{JT_{\permute'(i)}},\dots,\widehat{JT_{\permute'(j)}},\dots, JT_{\permute'(k)}\right)- \\
&\frac{(-2)^{-k}}{(k+1)k}\sum_{\permute' \in S_{k+1}}\sum_{0\leq i<j\leq k}(\pm)_{''}\cdot\left(T_{\permute'(i)}(JT_{\permute'(j)})-T_{\permute'(j)}(JT_{\permute'(i)}),JT_{\permute'(0)},\dots,\widehat{JT_{\permute'(i)}},\dots,\widehat{JT_{\permute'(j)}},\dots, JT_{\permute'(k)}\right),
\end{align*}
where
\[
(\pm)_{\flat'}=(-1)^{i+j-1}\cdot (-1)^{|T_{i}|\cdot\sum^{i-1}\limits_{p=0}|T_{p}|+|T_{j}|\cdot\sum^{j-1}\limits_{q=0,q\neq i}|T_{q}|},\quad (\pm)_{\flat''}=(-1)^{i+1}\cdot (-1)^{|T_{i}|\cdot\sum^{i-1}\limits_{p=0}|T_{p}|},
\]
\[
(\pm)_{\natural'}=\mathrm{sgn}(\permute)\cdot\varepsilon(\permute;T_0,\ldots,\widehat{T_i},\ldots,\widehat{T_j},\ldots,T_k)\cdot (-1)^{i+j-1}\cdot (-1)^{|T_{i}|\cdot\sum^{i-1}\limits_{p=0}|T_{\permute(p)}|+|T_{j}|\cdot\sum^{j-1}\limits_{q=0,q\neq i}|T_{\permute(q)}|},
\]
\[
(\pm)_{\natural''}=\mathrm{sgn}(\til{\permute})\cdot\varepsilon(\til{\permute};T_0,\ldots,\widehat{T_i},\ldots,T_k)\cdot (-1)^{i+1}\cdot (-1)^{|T_{i}|\cdot\sum^{i-1}\limits_{p=0}|T_{p}|}. 
\]
We compute
$$
([ JT_{\permute' ( i)} ,JT_{\permute' ( j)}])_{\s}^{\rrr} =\sum^d\limits_{\ttt=1}\frac{\partial T_{\permute' ( i)}^{\rrr}}{\partial z^{\ttt}}\frac{\partial T_{\permute' ( j)}^{\ttt}}{\partial z^{\s}} -\sum^d\limits_{\ttt=1}\frac{\partial T_{\permute' ( j)}^{\rrr}}{\partial z^{\ttt}}\frac{\partial T_{\permute' ( i)}^{\ttt}}{\partial z^{\s}},
$$
$$
( J[ T_{\permute' ( i)} ,T_{\permute '( j)}])_{\s}^{\rrr} =\sum^d\limits_{\ttt=1}\frac{\partial \left( T_{\permute' ( i)}^{\ttt}\frac{\partial T_{\permute' ( j)}^{\rrr}}{\partial z^{\ttt}} -T_{\permute' ( j)}^{\ttt}\frac{\partial T_{\permute' ( i)}^{\rrr}}{\partial z^{\ttt}}\right)}{\partial z^{\s}}.
$$
$$
\begin{array}{l}
\left(T_{\permute' ( i)}( JT_{\permute' ( j)}) -T_{\permute '( j)}( JT_{\permute' ( i)})\right)^\rrr_\s\\
=\sum^d\limits_{\ttt=1} T_{\permute '( i)}^{\ttt} \partial_{z^{\ttt}}\left(\frac{\partial T_{\permute '( j)}^{\rrr}}{\partial z^{\s}}\right) -\sum^d\limits_{\ttt=1}T_{\permute' ( j)}^{\ttt} \partial _{z^{\ttt}}\left(\frac{\partial T_{\permute' ( i)}^{\rrr}}{\partial z^{\s}}\right).
\end{array}
$$
\end{proof}

For any dg algebra $A$ the matrix trace $\op{Tr} \colon \lie{gl}_d(A) \to A$ extends to a cochain map of
$\lambda$-complexes
\begin{equation}\label{}
  \op{Tr} \colon \br C_\bu^\lambda(\lie{gl}_d(A)) \xto{\simeq} \br C_\bu^\lambda (A)
\end{equation}
where it is a quasi-isomorphism \cite{loday1984cyclic}.
The map $\op{Tr}$ is explicitly defined as 
\begin{equation}\label{}
  \op{Tr}( (M_0 \otimes a_0), \ldots, (M_\ell \otimes a_\ell)) \define \op{Tr}(M_0 \cdots
  M_\ell)\cdot (a_0, \ldots, a_\ell) .
\end{equation}

Returning to $A = \cJ_d$, we let $\lie{c}$ be the trace of this cochain $\til{c}$:
\begin{equation}\label{}
  \lie{c} = \op{Tr}(\til{\lie{c}}) \in C^\bu(\lie{witt}_d \, ; \, \br C_\bu^\lambda(\cJ_d))^1.
\end{equation}
\begin{corollary}
  The element $\lie{c} = \sum \lie{c}_k$ is a cocycle of total degree one.
\end{corollary}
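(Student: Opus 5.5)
The plan is to deduce this directly from the preceding theorem. The only new ingredient is that the trace map is compatible with everything in sight, so it sends cocycles to cocycles. Concretely, $\lie{c} = \op{Tr}_*(\til{\lie{c}})$, where $\op{Tr}_*$ is postcomposition with $\op{Tr}\colon \br C_\bu^\lambda(\lie{gl}_d(\cJ_d)) \to \br C_\bu^\lambda(\cJ_d)$. It therefore suffices to show that $\op{Tr}_*$ is a cochain map
\[
C^\bu(\lie{witt}_d ; \br C_\bu^\lambda(\lie{gl}_d(\cJ_d))) \to C^\bu(\lie{witt}_d ; \br C_\bu^\lambda(\cJ_d))
\]
of total degree zero. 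It then carries the total-degree-one cocycle $\til{\lie{c}}$ to a total-degree-one cocycle. Since $\op{Tr}_*$ acts componentwise on the $(k+1)$-linear pieces, it also preserves the decomposition $\lie{c} = \sum \lie{c}_k$ with $\lie{c}_k = \op{Tr}\circ\til{\lie{c}}_k$.

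The total differential on these Chevalley--Eilenberg complexes has three pieces: $\d_{\lie{g}}$, the internal differential $\d_M$ of the coefficients, and $\d^{\rm Lie}$. Postcomposition trivially commutes with $\d_{\lie{g}}$, since that piece only acts on the inputs. For $\d_M$, the internal differential on the $\lambda$-complex is the sum of the Hochschild-type boundary $b$ and the differential induced by $\dbar$. That $\op{Tr}$ commutes with these is the standard statement, quoted above from \cite{loday1984cyclic}, that $\op{Tr}$ is a cochain map of $\lambda$-complexes. For completeness I would recheck it briefly: $\dbar$ acts only on the $\cJ_d$-entries of $M\otimes a$, and $b$ multiplies adjacent entries $(M_i\otimes a_i)(M_{i+1}\otimes a_{i+1}) = M_iM_{i+1}\otimes a_ia_{i+1}$. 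Both operations are compatible with $\op{Tr}(M_0\cdots M_\ell)$. This uses that scalar matrices are in degree zero, so no Koszul signs arise from moving them past the $a_i$, and that the trace is cyclic, which handles the wrap-around term of $b$ and the cyclic invariance.

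The remaining point, which I expect to be the only real content, is compatibility with $\d^{\rm Lie}$. This amounts to showing that $\op{Tr}$ is a map of $\lie{witt}_d$-modules. I need to be careful about which $\lie{witt}_d$-action on $\br C_\bu^\lambda(\lie{gl}_d(\cJ_d))$ is meant. In the proof of the theorem, the term $T_i(JT_{\permute(0)},\ldots)$ is the action of $T$ by derivations on the $\cJ_d$-entries only, extended as a derivation over the tensor factors; the matrix indices are untouched. With that action, $T\cdot(M_0\otimes a_0,\ldots,M_\ell\otimes a_\ell) = \sum_i \pm (\ldots, M_i\otimes T(a_i),\ldots)$. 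Applying $\op{Tr}$ gives $\op{Tr}(M_0\cdots M_\ell)\sum_i\pm(\ldots,T(a_i),\ldots) = T\cdot \op{Tr}(\ldots)$, with the same Koszul signs because the $M_i$ have degree zero. Hence $\op{Tr}\circ \d^{\rm Lie}\varphi = \d^{\rm Lie}(\op{Tr}\circ\varphi)$ on each arity, by the explicit formulas for $\d^{\rm Lie}$ on generators extended to all cochains.

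Combining the three compatibilities, $(\d_{\lie{g}}+\d_M+\d^{\rm Lie})\lie{c} = \op{Tr}_*\big((\d_{\lie{g}}+\d_M+\d^{\rm Lie})\til{\lie{c}}\big) = 0$ by the theorem. The only place an obstacle could hide is if the intended module structure on the matrix-valued complex also rotated the frame indices, i.e.\ acted by $[JT,-]$ as well. Even then, conjugation by an element of $\lie{gl}_d(\cJ_d)$ extends to an inner derivation of the tuple. Its trace vanishes by cyclicity of $\op{Tr}(M_0\cdots M_\ell)$ together with the $\cJ_d$-factor, so $\op{Tr}$ would remain equivariant and the argument would go through unchanged.
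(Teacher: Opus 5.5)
Your argument is correct and is essentially the paper's own: the paper gives no separate proof, the corollary being immediate because $\lie{c}$ is the image of the cocycle $\til{\lie{c}}$ under postcomposition with $\op{Tr}$. That map is a cochain map of $\lambda$-complexes and intertwines the entrywise $\lie{witt}_d$-action used in the theorem with the derivation action on $\br C^\lambda_\bu(\cJ_d)$, and you verify this explicitly. Your closing hedge is unnecessary and inaccurate as stated: for $X\in\lie{gl}_d(\cJ_d)$ with non-constant entries, the trace of $\sum_i(\ldots,[X,M_i],\ldots)$ does not vanish at the chain level, since the $\cJ_d$-tuples differ from slot to slot and the vanishing holds only up to homotopy. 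Nothing in your main argument depends on that remark.
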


For $A$ a commutative dg algebra, Connes' complex $\overline{C}_\bu^\lambda(A)$ admits, itself, the natural structure of a
commutative dg algebra \cite[proposition 3.7]{loday1984cyclic}. 
It is defined by the formula
\begin{equation}\label{eqn:starproduct}
x*y \define x\times(By),
\end{equation}
where
\[
B(a_0,a_1,\dots,a_d)=\sum^n_{i=0}\pm (1,a_i,\dots,a_n,a_0,\dots,a_{i-1}),
\]
and 
\[
\pm=(-1)^{i\cdot n}\cdot\varepsilon(\permute_i;a_0,a_1,\dots,a_d),\quad \permute_i(a_0,a_1,\dots,a_d)=(a_i,\dots,a_n,a_0,\dots,a_{i-1})
\]
The operation $\times$ is the so-called shuffle product, given by 
\[
(a,a_1,\dots,a_p)\times (a',a_{p+1},\dots,a_{p+q})=\sum_{\text{Shuffle}}(\pm)\cdot\mathrm{sgn}(\permute)\cdot (aa',a_{\permute^{-1}(1)},\dots,a_{\permute^{-1}(p+q)}),
\]
\[
(\pm)=(-1)^{|a'|\sum^p\limits_{i=1}|a_i|}\cdot\varepsilon(\permute_i;a_1,\dots,a_{p+q})
\]
where the sum is over all permutations $\permute$ of $\{1,\dots,p+q\}$ such that $\permute(1)<\cdots<\permute(p)$ and $\permute(p+1)<\cdots<\permute(p+q)$.
Intuitively, this product involves putting two circles together with a cyclic twist; see figure \ref{fig:CyclicProd}. A circle with marked points can be viewed as an element in $\overline{A}^{\otimes \bullet+1}/(1-t).$
\begin{figure}[htbp]
    \centering

\tikzset{every picture/.style={line width=0.75pt}} 

\begin{tikzpicture}[x=0.75pt,y=0.75pt,yscale=-1,xscale=1]

\draw   (89,107) .. controls (89,82.7) and (108.7,63) .. (133,63) .. controls (157.3,63) and (177,82.7) .. (177,107) .. controls (177,131.3) and (157.3,151) .. (133,151) .. controls (108.7,151) and (89,131.3) .. (89,107) -- cycle ;
\draw [color={rgb, 255:red, 126; green, 211; blue, 33 }  ,draw opacity=1 ]   (126,64) ;
\draw [shift={(126,64)}, rotate = 0] [color={rgb, 255:red, 126; green, 211; blue, 33 }  ,draw opacity=1 ][fill={rgb, 255:red, 126; green, 211; blue, 33 }  ,fill opacity=1 ][line width=0.75]      (0, 0) circle [x radius= 3.35, y radius= 3.35]   ;
\draw [color={rgb, 255:red, 126; green, 211; blue, 33 }  ,draw opacity=1 ]   (158,72) ;
\draw [shift={(158,72)}, rotate = 0] [color={rgb, 255:red, 126; green, 211; blue, 33 }  ,draw opacity=1 ][fill={rgb, 255:red, 126; green, 211; blue, 33 }  ,fill opacity=1 ][line width=0.75]      (0, 0) circle [x radius= 3.35, y radius= 3.35]   ;
\draw [color={rgb, 255:red, 126; green, 211; blue, 33 }  ,draw opacity=1 ]   (174,124) ;
\draw [shift={(174,124)}, rotate = 0] [color={rgb, 255:red, 126; green, 211; blue, 33 }  ,draw opacity=1 ][fill={rgb, 255:red, 126; green, 211; blue, 33 }  ,fill opacity=1 ][line width=0.75]      (0, 0) circle [x radius= 3.35, y radius= 3.35]   ;
\draw   (218,105) .. controls (218,80.7) and (237.7,61) .. (262,61) .. controls (286.3,61) and (306,80.7) .. (306,105) .. controls (306,129.3) and (286.3,149) .. (262,149) .. controls (237.7,149) and (218,129.3) .. (218,105) -- cycle ;
\draw [color={rgb, 255:red, 189; green, 16; blue, 224 }  ,draw opacity=1 ]   (255,62) ;
\draw [shift={(255,62)}, rotate = 0] [color={rgb, 255:red, 189; green, 16; blue, 224 }  ,draw opacity=1 ][fill={rgb, 255:red, 189; green, 16; blue, 224 }  ,fill opacity=1 ][line width=0.75]      (0, 0) circle [x radius= 3.35, y radius= 3.35]   ;
\draw [color={rgb, 255:red, 189; green, 16; blue, 224 }  ,draw opacity=1 ]   (287,70) ;
\draw [shift={(287,70)}, rotate = 0] [color={rgb, 255:red, 189; green, 16; blue, 224 }  ,draw opacity=1 ][fill={rgb, 255:red, 189; green, 16; blue, 224 }  ,fill opacity=1 ][line width=0.75]      (0, 0) circle [x radius= 3.35, y radius= 3.35]   ;
\draw [color={rgb, 255:red, 189; green, 16; blue, 224 }  ,draw opacity=1 ]   (303,122) ;
\draw [shift={(303,122)}, rotate = 0] [color={rgb, 255:red, 189; green, 16; blue, 224 }  ,draw opacity=1 ][fill={rgb, 255:red, 189; green, 16; blue, 224 }  ,fill opacity=1 ][line width=0.75]      (0, 0) circle [x radius= 3.35, y radius= 3.35]   ;
\draw [color={rgb, 255:red, 189; green, 16; blue, 224 }  ,draw opacity=1 ]   (243,144) ;
\draw [shift={(243,144)}, rotate = 0] [color={rgb, 255:red, 189; green, 16; blue, 224 }  ,draw opacity=1 ][fill={rgb, 255:red, 189; green, 16; blue, 224 }  ,fill opacity=1 ][line width=0.75]      (0, 0) circle [x radius= 3.35, y radius= 3.35]   ;
\draw   (444.04,130.49) .. controls (432.27,109.23) and (439.97,82.45) .. (461.23,70.69) .. controls (482.49,58.92) and (509.27,66.62) .. (521.04,87.88) .. controls (532.8,109.14) and (525.1,135.92) .. (503.84,147.68) .. controls (482.58,159.45) and (455.81,151.75) .. (444.04,130.49) -- cycle ;
\draw [color={rgb, 255:red, 126; green, 211; blue, 33 }  ,draw opacity=1 ]   (455.59,74.95) ;
\draw [shift={(455.59,74.95)}, rotate = 0] [color={rgb, 255:red, 126; green, 211; blue, 33 }  ,draw opacity=1 ][fill={rgb, 255:red, 126; green, 211; blue, 33 }  ,fill opacity=1 ][line width=0.75]      (0, 0) circle [x radius= 3.35, y radius= 3.35]   ;
\draw [color={rgb, 255:red, 126; green, 211; blue, 33 }  ,draw opacity=1 ]   (487.46,66.46) ;
\draw [shift={(487.46,66.46)}, rotate = 0] [color={rgb, 255:red, 126; green, 211; blue, 33 }  ,draw opacity=1 ][fill={rgb, 255:red, 126; green, 211; blue, 33 }  ,fill opacity=1 ][line width=0.75]      (0, 0) circle [x radius= 3.35, y radius= 3.35]   ;
\draw [color={rgb, 255:red, 126; green, 211; blue, 33 }  ,draw opacity=1 ]   (526.64,104.21) ;
\draw [shift={(526.64,104.21)}, rotate = 0] [color={rgb, 255:red, 126; green, 211; blue, 33 }  ,draw opacity=1 ][fill={rgb, 255:red, 126; green, 211; blue, 33 }  ,fill opacity=1 ][line width=0.75]      (0, 0) circle [x radius= 3.35, y radius= 3.35]   ;
\draw [color={rgb, 255:red, 189; green, 16; blue, 224 }  ,draw opacity=1 ]   (475,66) ;
\draw [shift={(475,66)}, rotate = 0] [color={rgb, 255:red, 189; green, 16; blue, 224 }  ,draw opacity=1 ][fill={rgb, 255:red, 189; green, 16; blue, 224 }  ,fill opacity=1 ][line width=0.75]      (0, 0) circle [x radius= 3.35, y radius= 3.35]   ;
\draw [color={rgb, 255:red, 189; green, 16; blue, 224 }  ,draw opacity=1 ]   (507,74) ;
\draw [shift={(507,74)}, rotate = 0] [color={rgb, 255:red, 189; green, 16; blue, 224 }  ,draw opacity=1 ][fill={rgb, 255:red, 189; green, 16; blue, 224 }  ,fill opacity=1 ][line width=0.75]      (0, 0) circle [x radius= 3.35, y radius= 3.35]   ;
\draw [color={rgb, 255:red, 189; green, 16; blue, 224 }  ,draw opacity=1 ]   (523,126) ;
\draw [shift={(523,126)}, rotate = 0] [color={rgb, 255:red, 189; green, 16; blue, 224 }  ,draw opacity=1 ][fill={rgb, 255:red, 189; green, 16; blue, 224 }  ,fill opacity=1 ][line width=0.75]      (0, 0) circle [x radius= 3.35, y radius= 3.35]   ;
\draw [color={rgb, 255:red, 189; green, 16; blue, 224 }  ,draw opacity=1 ]   (463,148) ;
\draw [shift={(463,148)}, rotate = 0] [color={rgb, 255:red, 189; green, 16; blue, 224 }  ,draw opacity=1 ][fill={rgb, 255:red, 189; green, 16; blue, 224 }  ,fill opacity=1 ][line width=0.75]      (0, 0) circle [x radius= 3.35, y radius= 3.35]   ;

\draw (194,101.4) node [anchor=north west][inner sep=0.75pt]    {$*$};
\draw (111,45.4) node [anchor=north west][inner sep=0.75pt]  [font=\scriptsize]  {$x_{0}$};
\draw (162,52.4) node [anchor=north west][inner sep=0.75pt]  [font=\scriptsize]  {$x_{1}$};
\draw (241,38.4) node [anchor=north west][inner sep=0.75pt]  [font=\scriptsize]  {$y_{0}$};
\draw (291,48.4) node [anchor=north west][inner sep=0.75pt]  [font=\scriptsize]  {$y_{1}$};
\draw (176,127.4) node [anchor=north west][inner sep=0.75pt]  [font=\scriptsize]  {$x_{p}$};
\draw (228,152.4) node [anchor=north west][inner sep=0.75pt]  [font=\scriptsize]  {$y_{q}$};
\draw (326,97.4) node [anchor=north west][inner sep=0.75pt]    {$=$};
\draw (432.74,63.68) node [anchor=north west][inner sep=0.75pt]  [font=\scriptsize,rotate=-331.04]  {$x_{0}$};
\draw (488.45,43.22) node [anchor=north west][inner sep=0.75pt]  [font=\scriptsize,rotate=-10.15]  {$x_{1}$};
\draw (537.49,93.12) node [anchor=north west][inner sep=0.75pt]  [font=\scriptsize,rotate=-358.47]  {$x_{p}$};
\draw (459,44.4) node [anchor=north west][inner sep=0.75pt]  [font=\scriptsize]  {$y_{0}$};
\draw (516,54.4) node [anchor=north west][inner sep=0.75pt]  [font=\scriptsize]  {$y_{1}$};
\draw (373,87.4) node [anchor=north west][inner sep=0.75pt]    {$\sum $};
\draw (444,154.4) node [anchor=north west][inner sep=0.75pt]  [font=\scriptsize]  {$y_{q}$};

\end{tikzpicture}
    \caption{The $*$ product on Connes' $\lambda$-complex}
    \label{fig:CyclicProd}
\end{figure}

We are prepared to construct classes in $\mathbb{H}^{2}_{\mathrm{Lie}}(\lie{witt}_d)$.
\begin{definition}\label{dfn:chern}
    For non-negative integers $i_1,i_2,\cdots,i_{d+1}$ such that $\sum^{d+1}\limits_{p=1}p\cdot i_p=d+1$ we define
    \begin{equation}\label{eq:chernwitt}
    (\mathrm{ch}_1)^{i_1}(\mathrm{ch}_2)^{i_2}\cdots (\mathrm{ch}_{d+1})^{i_{d+1}}\in C^\bu(\lie{witt}_d ; \C )
  \end{equation}
    by 
    \begin{equation}\label{eq:chernwitt2}
      \rho\left(\op{Tr}(\lie{c}_1)^{*i_1}*\op{Tr}(\lie{c}_2)^{*i_2}*\cdots*\op{Tr}(\lie{c}_{d+1})^{*i_{d+1}}\right).
    \end{equation}
\end{definition}

\begin{remark}
  The element $\op{ch}_{d+1}$ is a homogenous algebraic combination of lower chern characters $\op{ch}_1, \ldots, \op{ch}_{d}$.
The proof of this when $d=2$ is direct calculation; we will include the $d>2$ proof in forthcoming work.
\end{remark}

%

In the remainder of this subsection we will prove that the cochain \eqref{eq:chernwitt} is indeed a cocycle.
To prepare, we prove some lemmas involving the product $*$ on (reduced) cyclic chains.

\begin{lemma}\label{Lem:KillbOp}
  The map $\rho$ from \eqref{eq:rho} is the composition 
  \begin{equation}
    \overline{C}_d^\lambda(\cJ_d)\xrightarrow{\pi}\frac{\Omega^d\otimes_{\mathcal{O}}\cJ_d}{\mathrm{Im}(\del)}\xrightarrow{\mathrm{Res}}\C
  \end{equation}
  where $\pi$ is $\pi(f_0,\ldots,f_d) = [f_0 \del f_1 \cdots \del f_d]$.
  For any cyclic chains $v,w$ one has 
    $$
    \pi\left(b(v)*w\right)=0,\forall v,w.
    $$
\end{lemma}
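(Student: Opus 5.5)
The plan has two parts. First, check that $\rho$ factors through $\pi$: this is the definition \eqref{eq:rho}, once we see that $\op{Res}$ vanishes on $\partial$-exact elements of $\Omega^d\otimes_{\mathcal O}\cJ_d$. Second, and this is the real content, show that $\pi(b(v)*w)=0$.

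\textbf{Step 1: $\op{Res}$ kills $\op{Im}(\partial)$.} An element $\partial(g\,\partial f_1\cdots\partial f_{d-1})$ is a sum of terms $\partial_{\ii}(\,\cdot\,)\,\d^dz$. The residue of a total $z$-derivative vanishes: $\op{Res}$ is $GL_d$-equivariant, and its pairing with $\C[\![z]\!]$ pairs the coefficient of $x^{k}P$ against $z^{k}$, so it only sees the coefficient of $P\,\d^dz$. That coefficient is never hit by a derivative $\partial_{\ii}$, by the formula $\partial_\1^k\partial_\2^\ell P=\pm(k+\ell+1)!(x^\1)^k(x^\2)^\ell P$ and its $d$-dimensional analogue.

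\textbf{Step 2: compute $\pi$ on a product.} Write $v=(a_0,\dots,a_p)$ and $w=(c_0,\dots,c_q)$. Since $B$ inserts a $1$ in the zeroth slot, the product is
\[
(a_0,\dots,a_p)\times B(c_0,\dots,c_q)=\sum(\pm)\,(a_0,\text{shuffles of }(a_1,\dots,a_p)\text{ with cyclic rotations of }(c_0,\dots,c_q)).
\]
Under $\pi$, the wedge product $\partial f_1\cdots\partial f_d$ is graded-antisymmetric, so the shuffle sum collapses to a multiple of $a_0\,\partial a_1\cdots\partial a_p\cdot\partial c_0\cdots\partial c_q$. The cyclic sum over rotations likewise collapses, using graded commutativity of forms and the signs built into $B$. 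This gives the factorization
\[
\pi(v*w)=N_{p,q}\,\pi(v)\wedge\partial\pi(w),
\]
with an explicit combinatorial constant $N_{p,q}$. Here $\pi(w)$ is regarded in $\Omega^q\otimes\cJ_d$ modulo exact forms, and applying $\partial$ to it is well defined.

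\textbf{Step 3: $\pi\circ b$ is exact.} I need $\pi(b v)\in\op{Im}(\partial)$. For commutative $A$ this is the standard fact that the HKR map $(a_0,\dots,a_n)\mapsto a_0\,\partial a_1\cdots\partial a_n$ kills Hochschild boundaries; this holds even for dg $A$ with Koszul signs. The mechanism: the terms of $bv$ pair up, with the term $a_ia_{i+1}$ (where $\partial(a_ia_{i+1})=a_i\partial a_{i+1}+\partial a_i\,a_{i+1}$) cancelling against its neighbours, and the wrap-around term $a_na_0$ cancelling through commutativity. In the dg setting there are also the terms coming from the internal differential $\dbar$. Since $\pi$ commutes with $\dbar$ and $\op{Res}$ is a cochain map, these contribute $\dbar$-exact pieces, which vanish under the composite after taking cohomology. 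Indeed the relevant statement is at the level of $\op{Res}\circ\pi$, and $\op{Res}$ kills $\dbar$-exact elements.

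Combining Steps 2 and 3 gives
\[
\pi(bv*w)=N\,\pi(bv)\wedge\partial\pi(w)=N\,\partial(\eta)\wedge\partial\pi(w)=\pm N\,\partial\bigl(\eta\wedge\partial\pi(w)\bigr)\in\op{Im}\partial,
\]
using that $\partial\pi(bv)$ only needs $\pi(bv)$ modulo exact forms.

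\textbf{Main obstacle.} The hardest part is Step 2: showing rigorously that the shuffle and cyclic-rotation sums collapse under $\pi$ to $\pi(v)\wedge\partial\pi(w)$ with consistent Koszul signs. The subtle case is the rotations that move $c_0$ out of the first slot, so that $c_0$ becomes differentiated. The identity $\sum_{\text{rot}}\partial c_i\cdots c_0\cdots=\partial(c_0\partial c_1\cdots)$ is exactly what produces $\partial\pi(w)$. I would verify it first for $p,q\le2$ (the only case needed for $d=2$) before attempting the general sign bookkeeping.
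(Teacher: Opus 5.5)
Your proof of the second assertion is correct and is essentially the paper's argument. Both collapse the shuffle and rotation sums under $\pi$, and then use that the HKR-type map $\epsilon(a_0,\ldots,a_n)=a_0\,\partial a_1\cdots\partial a_n$ kills Hochschild boundaries. The only difference is where $bv$ sits.
\begin{itemize}
\item The paper writes $b(v)*w$ as $B(bv)\times w$, so every entry of $bv$ is differentiated. It then checks $\partial^{\otimes i}(bv)=0$ by telescoping, which is the identity $\partial\circ\epsilon\circ b=0$.
\item You keep $bv$ in the unrotated slot, as $x*y=x\times By$ literally dictates. You get $\pi(bv*w)=N\,\epsilon(bv)\wedge\partial c_0\cdots\partial c_q$ and use $\epsilon\circ b=0$ directly.
\end{itemize}
These are the same idea.

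The gap is in the first assertion. For $\pi$ to be a map out of Connes' complex $\overline{C}^\lambda_d(\mathcal{J}_d)$, you must show that $\pi\circ(1-t)$ lands in $\mathrm{Im}(\partial)$. This is exactly what the paper's proof begins with. Your Step 1 shows that $\mathrm{Res}$ kills $\partial$-exact forms, which the paper takes for granted; that is a different point. The missing check is one line:
\[
\pi\bigl((1-t)(f_0,\ldots,f_d)\bigr)=f_0\,\partial f_1\cdots\partial f_d-(-1)^d f_d\,\partial f_0\cdots\partial f_{d-1}=(-1)^{d-1}\,\partial\bigl(f_0f_d\,\partial f_1\cdots\partial f_{d-1}\bigr).
\]
You also use it implicitly when you treat $\pi(w)$ as a class modulo exact forms.

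Two smaller remarks.
\begin{itemize}
\item The Hochschild differential $b$ has no $\bar\partial$-terms; those belong to the separate internal differential. Since $\epsilon(bv)=0$ holds on the nose, you get $\pi(b(v)*w)=0$ exactly. Retreating to $\mathrm{Res}\circ\pi$, as your Step 3 suggests, would prove less than the lemma states.
\item Your ``main obstacle'' is not one. $B$ puts $1$ in the zeroth slot, so no $c_j$ ever stays undifferentiated. For degree-zero entries, the sign $(-1)^{iq}$ in $B$ cancels the sign $(-1)^{i(q+1-i)}$ from cyclically reordering the one-forms. So each of the $q+1$ rotations contributes the same $\partial c_0\cdots\partial c_q$.
\end{itemize}
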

\begin{proof}
    We first prove that $\pi$ is well defined. 
    The permutation operator on Connes' cyclic chains is defined by
    $$
    t(f_0,f_1,\dots,f_d)=(-1)^d\cdot (f_d,f_0,\dots,f_{d-1}).
    $$
    We have
    \begin{align*}
      & \pi\left((1-t)(f_0,f_1,\dots,f_d)\right)\\
      &=f_0\partial f_1\cdots \partial f_d-(-1)^df_d\partial f_0\cdots \partial f_{d-1}\\
      &=(-1)^{d-1}\partial\left(f_0f_d\cdot \partial f_1\cdots \partial f_{d-1}\right).
    \end{align*}
    Thus $\pi$ maps $(1-t)a$ to $\mathrm{Im}(\del)$ which induce a map $\overline{\cJ}^{\otimes d+1}_d/(1-t)\rightarrow
    \frac{\Omega^d\otimes_{\mathcal{O}}\cJ_d}{\mathrm{Im}(\del)}$.

    We compute
    \begin{align*}
       &b(v)*w\\
       &=(Bb(v_0,v_1,\dots,v_i))\times (w_0,w_1,\dots,w_{d-i-1})\\
       &=\sum_{\permute}\sum_{\tau}(-1)^{\chi}\permute(w_0,\tau(bv),w_1,\dots,w_{d-i-1})
    \end{align*}
    where summation is over cyclic permutation $\tau$ and shuffle $\permute$, and $(-1)^{\chi}$ is a sign factor that is not important in the proof. Note that $\pi$ is invariant under the shuffle $\permute$ action, we only need to prove that
    $$
    \partial^{\otimes i}\left(b(v_0,v_1,\dots,v_i)\right)=0.
    $$
We compute as follows
\begin{align*}
    &\partial^{\otimes i}\left(b(v_0,v_1,\dots,v_i)\right)\\
    &=\partial^{\otimes i}\left(\sum^{i-1}_{k=0}(-1)^k(v_0,\dots,v_kv_{k+1},\dots,v_i)+(-1)^i(v_iv_0,v_1,\dots,v_{i-1})\right)\\
     &=\sum^{i-1}_{k=0}(-1)^k\partial v_0\cdots\partial(v_kv_{k+1})\cdots\partial v_i+(-1)^i\partial(v_iv_0)\cdot \partial v_1\cdots\partial v_{i-1}\\
     &=v_0\cdot\partial v_1\cdots\partial v_i+(-1)^iv_0\partial v_i\cdot \partial v_1\cdots\partial v_{i-1}=0.
\end{align*}
\end{proof}


\begin{theorem}\label{thm:weakchern}
  The cochain \eqref{eq:chernwitt} is closed for the total differential on $C^{\bu}_{\text{Lie}} (\lie{witt}_d;\C)$ and
  so represents a class in $\HH^2_{\text{Lie}}(\lie{witt}_d)$.
\end{theorem}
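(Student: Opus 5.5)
The plan is to view the cochain \eqref{eq:chernwitt} as the image of $\lie{c}$ under three structure-preserving operations, and to check that each one respects the relevant differentials. Write $\mathcal{C} = C^\bu(\lie{witt}_d;\br C^\lambda_\bu(\cJ_d))$. Its total differential is $D = \d_{\lie{witt}} + b + \d^{\rm Lie}$, where $b$ now also absorbs the internal $\dbar$ of $\cJ_d$.

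\emph{Step 1: $\mathcal{C}$ is a commutative dg algebra.} Since $\br C^\lambda_\bu(\cJ_d)$ is a commutative dg algebra under $*$ \cite[Prop.~3.7]{loday1984cyclic}, the cup product on Chevalley--Eilenberg cochains, combined with $*$ on coefficients, makes $\mathcal{C}$ a graded commutative algebra. I want $D$ to be a derivation for this product. For $\d_{\lie{witt}}$ and the cup-product part of $\d^{\rm Lie}$ this is standard. For the action part, I need $\lie{witt}_d$ to act on $\br C^\lambda_\bu(\cJ_d)$ by derivations of $*$. This holds because the action is induced from an action on $\cJ_d$ by derivations, and both $B$ and the shuffle product are natural with respect to algebra derivations. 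For $b$, I use that $b$ is a derivation of $*$ on the reduced $\lambda$-complex, which is again \cite{loday1984cyclic}.

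\emph{Step 2: the product is a cocycle.} By the Corollary, $\lie{c}$ is a total-degree-one cocycle, so each homogeneous component is a cocycle. Here $\op{Tr}(\lie{c}_k)$ means the piece of cyclic-chain length $k-1$ (equivalently, form degree $k$), which I extract using a grading by cyclic length that is compatible with $D$ up to shifts. One caveat: $\lie{c}$ is only closed as a whole, and $b$ lowers cyclic length. So I will not argue componentwise. Instead I apply the derivation property to the $*$-power of the full $\lie{c}$, and then pick out the component landing in $\br C^\lambda_d$; this component is exactly the product in \eqref{eq:chernwitt2}, viewed as a degree-$d$ cyclic chain. The upshot is that $D$ of the product is a sum of terms, each of the form $b(\cdot)*(\cdot)$ or $\d^{\rm Lie}$ applied to a product.

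\emph{Step 3: $\rho$ kills the boundary terms.} Applying $\rho$ must send $D$-exact pieces in cyclic degree $d$ to zero. There are two kinds of term to handle.
\begin{itemize}
\item The $b$-terms have the form $b(v)*w$. By Lemma~\ref{Lem:KillbOp}, $\rho(b(v)*w) = \op{Res}\,\pi(b(v)*w) = 0$. The internal $\dbar$ terms vanish because $\op{Res}$ is a cochain map.
\item The $\d^{\rm Lie}$-action terms vanish by Lemma~\ref{lem:rho}: $\rho$ is $\lie{witt}_d$-invariant, so $\rho(X\cdot\omega) = 0$.
\end{itemize}
With these, $\rho$ intertwines $D$ with the Chevalley--Eilenberg differential on $C^\bu(\lie{witt}_d;\C)$, and so it sends cocycles to cocycles. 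For the degree count: $\rho$ has degree $1$ and the product has total degree equal to the number of factors. A consistency check in the regrading shows the total degree is $2$, which confirms the class lands in $\HH^2$.

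The main obstacle I expect is Step 2. The difficulty is making the derivation property of $b$ and of the action with respect to $*$ precise in the reduced, dg setting, with the Koszul signs under control, and tracking how the cyclic-length components interact so that only terms of the form $b(v)*w$ survive in degree $d$. My first attempt would be to verify the derivation property of $b$ directly on $B(\cdot)\times(\cdot)$, using $bB + Bb = 0$ and the fact that $b$ is a derivation of the shuffle product for commutative $\cJ_d$.
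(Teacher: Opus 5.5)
Your Step~2 has a genuine gap. Because $*$ raises cyclic degree by one, every product $\lie{c}_{n_1}*\cdots*\lie{c}_{n_r}$ with $n_1+\cdots+n_r=d+1$ lands in $\br C^\lambda_d$, whatever $r$ is. So the component of $\lie{c}^{*r}$ in $\br C^\lambda_d$ is the sum over \emph{all} compositions of $d+1$ into $r$ parts. That is a multinomial combination of every monomial with $r$ factors, not the single product in \eqref{eq:chernwitt2}.

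For $d\le 2$ each $r$ happens to produce a multiple of a single monomial. But already for $d=3$, $r=2$ you get $2\,\lie{c}_1*\lie{c}_3+\lie{c}_2*\lie{c}_2$. Your argument therefore shows only that $2\op{ch}_1\op{ch}_3+\op{ch}_2^2$ is closed, not $\op{ch}_1\op{ch}_3$ and $\op{ch}_2^2$ separately, and this route cannot isolate individual monomials. Relatedly:
\begin{itemize}
\item Your first claim, that each homogeneous component of $\lie{c}$ is a cocycle, is false.
\item Your closing ``$\rho$ sends cocycles to cocycles'' does not apply to an individual monomial. Such a monomial is in general not $D$-closed; its differential only vanishes after applying $\rho$.
\item Your degree count is also off. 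Every such product has $d+1$ inputs and cyclic degree $d$ independently of $r$. The total degree $2$ comes from $\op{Res}$ being nonzero only on inputs of total internal degree $d-1$.
\end{itemize}

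The missing idea is the componentwise content of $D\lie{c}=0$. Splitting by number of inputs and cyclic degree, it says exactly that $(\dbar+\d_{\lie{witt}})\lie{c}_k=0$ and $\d^{\rm Lie}\lie{c}_k=-b\,\lie{c}_{k+1}$. This zig-zag is what the proof of the cocycle theorem actually verifies.

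The paper uses it on one monomial at a time, evaluated on a chain $\xi$:
\begin{itemize}
\item $\op{Res}$ kills the $\dbar$-terms.
\item Lemma~\ref{lem:rho} turns $\rho(\mathrm{mono}(\d^{\rm Lie}\xi))$ into $\rho((\d^{\rm Lie}\mathrm{mono})(\xi))$.
\item The Leibniz rule for $\d^{\rm Lie}$ alone, after reordering by commutativity of $*$, gives $\d^{\rm Lie}(\lie{c}_{n_1}*\cdots*\lie{c}_{n_r})=-\sum_i\pm\, b(\lie{c}_{n_i+1})*(\text{remaining factors})$.
\item Each of these terms is killed by Lemma~\ref{Lem:KillbOp}.
\end{itemize}
That is why the lemma is stated in the strong form $\pi(b(v)*w)=0$. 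In your framework, where $\rho$ would be applied to a genuine $D$-cocycle, you would only ever need $\rho\circ b=0$. So your Step~3 is really describing the paper's argument rather than your own.

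The paper's route also removes the obstacle you anticipated. You never need $b$ to be a derivation of $*$. You only need $\lie{witt}_d$ to act on $\br C^\lambda_\bu(\cJ_d)$ by derivations of $*$, which you already argued.
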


\begin{proof}
$$
(\d^{\rm Lie}+\dbar)\rho\left(\op{Tr}(\lie{c}_1)^{*i_1}*\op{Tr}(\lie{c}_2)^{*i_2}*\cdots*\op{Tr}(\lie{c}_d)^{*i_d}\right)(\xi)
    $$    
    $$
    =\mathrm{Res}\circ \pi\left(\op{Tr}(\lie{c}_1)^{*i_1}*\op{Tr}(\lie{c}_2)^{*i_2}*\cdots*\op{Tr}(\lie{c}_d)^{*i_d}
    \right)((\d^{\rm Lie}+\dbar)\xi)
    $$
     $$
    =\mathrm{Res}\circ \pi\left(\op{Tr}(\lie{c}_1)^{*i_1}*\op{Tr}(\lie{c}_2)^{*i_2}*\cdots*\op{Tr}(\lie{c}_d)^{*i_d}
    (\d^{\rm Lie}\xi)\right)\quad (\text{Res annihilates }\ \dbar(-))
    $$
    Using Lemma \ref{lem:rho}, we have
    \begin{align*}
        &\mathrm{Res}\circ \pi\left(\op{Tr}(\lie{c}_1)^{*i_1}*\op{Tr}(\lie{c}_2)^{*i_2}*\cdots*\op{Tr}(\lie{c}_d)^{*i_d}(\d^{\rm Lie}\xi)\right)\\
        &=\mathrm{Res}\circ \pi \left(\d^{\rm Lie}\left(\op{Tr}(\lie{c}_1)^{*i_1}*\op{Tr}(\lie{c}_2)^{*i_2}*\cdots*\op{Tr}(\lie{c}_d)^{*i_d}\right)(\xi)\right)\\
        &=\mathrm{Res}\circ \pi \left(\left(\sum bx* y\right)(\xi)\right)=0.
    \end{align*}
\end{proof}

\subsection{Some low-dimensional examples}

When $d=1$ the only class the above construction produces is the standard Virasoro cocycle defined, up to scale, by: 
\begin{equation}\label{}
  \op{ch}_1^2 \colon \left(f \del_z, g \del_z \right) \mapsto \op{Res} \left(\del_z f \del_z^2 g \d z \right) .
\end{equation}
Thus, in this case we find that our construction recovers the well-known isomorphism $H^2(\lie{witt}_1) = \C$.

When $d=2$ we have two cocycles corresponding to the polynomials, $\op{ch}_1^3, \op{ch}_1 \op{ch}_2$.
Explicitly:
\begin{equation}\label{}
  \op{ch}_1^3 \colon (T_1 , T_2, T_3) \mapsto \op{Res}\left(\op{Tr}(J T_1) \del \op{Tr}(J T_2) \del \op{Tr}(J T_3)
    \right) . 
\end{equation}
This is the cocycle we advertised in equation \eqref{eq:ch13}.
Likewise, the other cocycle in dimension two is:
\begin{equation}\label{}
  \op{ch}_1 \op{ch}_2 \colon (T_1 , T_2, T_3) \mapsto \frac16 \sum_{\tau} (\pm)_{\tau}'' \op{Res}\left(\op{Tr}(J T_{\tau(1)} \op{Tr}(\del J
    T_{\tau(2)} \del J T_{\tau(3)})
    \right) .
\end{equation}
This is the cocycle we advertised in \eqref{eq:ch1ch2}.

\section{Cohomology of $\lie{witt}_2$}\label{s:2dgf}

In this section we classify central extensions of the two-dimensional dg Witt algebra $\lie{witt}_2$.
Previously, we have constructed classes in $\HH^2_{Lie}(\lie{witt}_d)$ from universal characteristic classes.
We have not yet shown that these classes represent non-trivial elements in cohomology nor did we show that these
classes are linearly independent in cohomology.
We tie up these loose ends completely in the case $d=2$.
When $d=2$ there are two universal characteristic classes $\op{ch}
_1^3$ (see \eqref{eq:ch13}) and $\op{ch}_1 \op{ch}_2$ (see \eqref{eq:ch1ch2}) which give rise to central extensions.
The section splits up into two main parts.

First, we show that the classes $\op{ch}_1^3$ and $\op{ch}_1 \op{ch}_2$ are indeed independent in cohomology by
direct calculation.
Thus $\dim \HH^2_{Lie} (\lie{witt}_2) \geq 2$.
Next, we use the Hochschild--Serre spectral sequence to obtain an upper bound thus finishing the proof of the theorem.

\begin{theorem}[]
\label{thm:2dgf} 
\begin{itemize}
  \item[(1)] The map
    \[
      \C[\op{ch}_1,\op{ch}_2]_6 \to \HH^2(\lie{witt}_2)
    \]
    defined in definition \ref{dfn:chern} is
    injective.
  \item[(2)] One has 
\begin{equation}
  \dim \HH^2_{Lie}(\lie{witt}_2) = 2 .
\end{equation}
\end{itemize}
\end{theorem}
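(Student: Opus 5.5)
For part (1) I would show that the two cocycles $\op{ch}_1^3$ and $\op{ch}_1\op{ch}_2$ are linearly independent in $\HH^2_{Lie}(\lie{witt}_2)$ by pairing them with explicit total-degree-two cycles in the Chevalley--Eilenberg chains of $\lie{witt}_2$. Both cocycles are trilinear and nonzero only when exactly one of the three inputs has degree one (a $P$-component), since $\op{Res}$ needs a degree-one input. So I would look for chains of the form $T_1\wedge T_2\wedge T_3$ with $T_1,T_2\in\lie{w}_2$ polynomial and $T_3 = (x^\1)^k(x^\2)^\ell P\,\del_\ii$, built from $\dbar$-cocycles so that the internal differential kills them. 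Closure under the Lie differential may force correction terms; these are sums of such triples. I would choose two test chains, for instance with $T_1,T_2$ among $z^\1\del_\1, z^\2\del_\2, z^\1 z^\2\del_\1,(z^\1)^2\del_\1,\dots$ and $T_3$ of low $x$-degree, and arrange that they have weight zero for the Euler field $z^\ii\del_\ii$ and for the torus $GL_1^2$. Using the module structure of Lemma~\ref{prop:dbarwitt} and the rule $(\del_\1)^k(\del_\2)^\ell P = (-1)^{k+\ell}(k+\ell+1)!(x^\1)^k(x^\2)^\ell P$, I would evaluate the $2\times 2$ pairing matrix by residues. Nondegeneracy of this matrix gives injectivity.

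For part (2) I would use the spectral sequence of the filtration by symmetric degree. Its $E_1$ page is the Chevalley--Eilenberg complex of the graded Lie algebra $H^\bu(\lie{witt}_2)\cong \lie{w}_2\ltimes\sP[-1]$ from Lemma~\ref{prop:dbarwitt}. The problem then reduces to bounding $H^2$ of this graded Lie algebra (formality is not needed, since we only need an upper bound). Since $\sP[-1]$ is an abelian ideal in degree one, the Hochschild--Serre spectral sequence for the ideal gives
\[
E_2^{p,q} = H^p\bigl(\lie{w}_2; \op{Sym}^q(\sP^*)\bigr)
\]
in total degree $p-q+2q = p+q$ (up to the shift conventions), with $\sP^*=\Omega^2\otimes\Omega^1$. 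Total degree two then receives contributions from $q=0,1,2$:
\begin{itemize}
\item $H^2(\lie{w}_2)$, which vanishes for $d=2$ by Gelfand--Fuks;
\item $H^1(\lie{w}_2;\Omega^2\otimes\Omega^1)$;
\item $H^0(\lie{w}_2;\op{Sym}^2(\Omega^2\otimes\Omega^1))$, which is zero because invariants of tensor modules vanish.
\end{itemize}
Care is needed with the grading: a degree-one element of $\lie{witt}_2$ shifted contributes degree zero in $\lie{g}[1]$. Cochains in $\sP^*$ therefore sit in symmetric degree with internal degree $-1$, so the relevant groups are $H^{p}(\lie{w}_2;\op{Sym}^q\sP^*)$ with $p+q-\dots$ adjusted, and I would redo this bookkeeping carefully. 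The likely correct outcome is that only $H^3(\lie{w}_2;\op{Sym}^1)$-type or $H^{p}$ terms with trilinear total arity $3$ survive, matching the trilinear shape of the cocycles.

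The main computation is the cohomology of $\lie{w}_2$ with coefficients in the tensor module $\Omega^2\otimes\Omega^1$ (and possibly its symmetric square). I would compute it by the Gelfand--Fuks method: restrict to $\lie{gl}_2\subset\lie{w}_2$ using Euler-weight arguments, so that only weight-zero cochains matter, then reduce to the relative cohomology of $\lie{w}_2$ with respect to $\lie{gl}_2$. Next I would invoke the known description of $H^\bu(\lie{w}_n; \text{tensor modules})$ via the truncated Weil algebra and Goncharov-type results, or compute directly with the finite-dimensional weight-zero subcomplex. I expect this to give dimension at most $2$, matched by a $\lie{gl}_2$-invariant count of pairings of $\op{S}^3(\lie{gl}_2)$-type cocycles, i.e. $\dim \C[\op{ch}_1,\op{ch}_2]_6=2$. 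This spectral-sequence degree bookkeeping, together with the coefficient cohomology computation, is the main obstacle. Combining the upper bound with part (1) gives $\dim\HH^2_{Lie}(\lie{witt}_2)=2$.
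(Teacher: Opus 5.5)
Your part (1) follows the paper's strategy: pair $\op{ch}_1^3$ and $\op{ch}_1\op{ch}_2$ against explicit $(\dbar+\d^{\rm Lie})$-cycles whose trilinear part is $P\,\del_\1\wedge X_2\wedge X_3$ with $X_2,X_3$ polynomial, then check a $2\times 2$ determinant. One correction: the correction terms are not triples. You need $[X_2,X_3]=0$, or cancellation within a combination. You then write $[P\del_\1,X_i]=\dbar Y_{1i}$ with $Y_{1i}\in\lie{witt}_2^0$, add the $2$-chains $Y_{1i}\wedge X_j$, and finally add polynomial $2$-chains such as $\del_\1\wedge(z^\1)^2\del_\1$.

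\textbf{The bookkeeping gap in part (2).} The genuine gap is in part (2), at exactly the bookkeeping you leave open. Since $\sP$ has internal degree $1$, $\sP^*$ sits in degree $0$ of $\op{Sym}((H^\bu(\lie{witt}_2)[1])^*)$, while $\lie{w}_2^*$ sits in degree $1$. As $\sP[-1]$ is abelian, the $E_2$-page is $\bigoplus_j H^\bu(\lie{w}_2;\op{Sym}^j\sP^*)$, graded by Lie degree alone. So total degree two receives $\bigoplus_{j\ge 0}H^2(\lie{w}_2;\op{Sym}^j(\Omega^2\otimes\Omega^1))$: Lie degree $2$ and \emph{every} symmetric degree $j$.

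Your list ($H^2(\lie{w}_2)$, $H^1(\lie{w}_2;\sP^*)$, $H^0(\lie{w}_2;\op{Sym}^2\sP^*)$) contains two groups that do not contribute. It also omits the group that carries the classes, $H^2(\lie{w}_2;\Omega^2\otimes\Omega^1)$. The cocycles live in $\wedge^2\lie{w}_2^*\otimes\sP^*$, not in an ``$H^3(\lie{w}_2;\op{Sym}^1)$'' term. Taken literally, your list bounds $\dim\HH^2_{Lie}(\lie{witt}_2)$ by $0$, because $H^1(\lie{w}_2;\sP^*)=0$. That contradicts part (1).

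\textbf{Missing steps once the grading is fixed.} Two substantive steps are missing.

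First, the infinite family $H^2(\lie{w}_2;\op{Sym}^j\sP^*)$ for $j\ge 2$ must be shown to vanish. This is not formal: the ``no invariants in tensor modules'' argument only handles $H^0$. The paper proceeds as follows:
\begin{itemize}
\item it dualizes to $H_2(\lie{w}_2^{poly};\op{Sym}^j\sP)$ and restricts to bi-weight $(0,0)$ for $(z^\1\del_\1,z^\2\del_\2)$;
\item it uses explicit brackets with $(z^\1)^2\del_\1$ and $(z^\2)^2\del_\2$ to push both vector fields into $L_1^{poly}$;
\item it invokes Fuks' result that $H_2(L_1^{poly})$ has no component of weight $\ge 4$, while the weight here is $m^\1+m^\2\ge 3j\ge 6$;
\item this lets it lower the weight inductively until the cycle is exact.
\end{itemize}

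Second, $\dim H^2(\lie{w}_2;\Omega^2\otimes\Omega^1)=2$ is only ``expected'' in your sketch. The paper obtains it from Losik's formula
\[
H^\bu(\lie{w}_2;\sM)\cong H^\bu(\lie{gl}_2)\otimes[H^\bu(L_1)\otimes M]^{GL_2}.
\]
It shows there are no invariants for $q=0,1$. For $q=2$, it finds three copies of the standard $\lie{sl}_2$-module among the $2$-chains of $L_1$ at scaling dimension $3$, all automatically cycles. It finds one copy in the $3$-chains with nonzero boundary, which leaves exactly two. Your $\lie{gl}_2$-relative reduction points in this direction. But without this count, and without the $j\ge 2$ vanishing, you have no upper bound, so part (2) is not established.
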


Together, parts (1) and (2) of this theorem imply theorem \ref{thm:chern}.
In the final part of this section we prove that the second Lie algebra cohomology of Lie algebra of (derived) differential operators
on the punctured $2$-disk is one-dimensional.
This fact will be used in our proof of the local, universal Grothendieck--Riemann--Roch theorem in section \ref{s:2dGRR}
.

\subsection{Injectivity}

In this section we consider only dimension $d=2$.
Theorem \ref{thm:weakchern} presents two universal classes in $\HH^2_{Lie}(\lie{witt}_2)$ which we denote $\op{ch}_1^3$ and $\op{ch}_1 \op{ch}_2$.
Our immediate goal is to show that these classes are independent (in particular, nontrivial).

We consider dual Chevalley--Eilenberg chains, which are simply elements of the graded symmetric algebra on $\lie{witt}
_2[1]$.
Our strategy is to find, for example, a chain $\mathbf{X}\in \bigwedge^{\bullet}\lie{witt}_{2}$ of total degree
$(-2)$\footnote{Note that our cohomologically graded Chevalley--Eilenberg complex is concentrated in non-positive
degrees.}
which detects the nontriviality of $\op{ch}_1^3$ at the
cochain level:
\[
(\dbar+d^{\mathrm{Lie}})\mathbf{X}=0,\quad (\mathrm{ch}_1)^3(\mathbf{X})\neq 0.
\]
We will do the same for $\op{ch}_1 \op{ch}_2$ as well.

We need the following straightforward lemma.
\begin{lemma}
Let
\[
X_1^{\mathrm{I}} = P\,\del_\1,\quad
X_2^{\mathrm{I}} = (z^\1)^3\del_\1,\quad
X_3^{\mathrm{I}} = (z^\2)^2\del_\2
\]
\[
X_1^{\mathrm{II}} = P\,\del_\1,\quad
X_2^{\mathrm{II}} = (z^\1)^2\del_\1,\quad
X_3^{\mathrm{II}} = z^\1 (z^\2)^2\del_\2
\]
\[
X_1^{\mathrm{III}} = P\,\del_\1,\quad
X_2^{\mathrm{III}} = z^\1 z^\2\del_\1,\quad
X_3^{\mathrm{III}} = (z^\1)^2 z^\2\del_\2
\]
\[
X_1^{\mathrm{IV}} = P\,\del_\1,\quad
X_2^{\mathrm{IV}} = (z^\1)^2 z^\2\del_\1,\quad
X_3^{\mathrm{IV}} = z^\1 z^\2\del_\2
\]
and let 
\[
\begin{aligned}
Y^{\mathrm{I}}_{12}
&:= 4 z^\1 x^\2\,\del_\1 + (z^\1)^2 x^\1 x^\2\,\del_\1,\\[4pt]
Y^{\mathrm{I}}_{13}
&:= -\,z^\2 x^\1 x^\2\,\del_\1 - x^\1\,\del_\1,
\end{aligned}
\qquad
\begin{aligned}
Y^{\mathrm{II}}_{12}
&:= 3 x^\2\,\del_\1 + z^\1 x^\1 x^\2\,\del_\1,\\[4pt]
Y^{\mathrm{II}}_{13}
&:= -\,z^\2 x^\1\,\del_\2 + (z^\2)^2 (x^\2)^2\,\del_\1,
\end{aligned}
\]
\[
\begin{aligned}
Y^{\mathrm{III}}_{12}
&:= -x^\1\,\del_\1 - z^\1 (x^\1)^2\,\del_\1,\\[4pt]
Y^{\mathrm{III}}_{13}
&:= 2 z^\2 x^\2\,\del_\2 + z^\1 z^\2 (x^\2)^2\,\del_\1,
\end{aligned}
\qquad
\begin{aligned}
Y^{\mathrm{IV}}_{12}
&:= 2 x^\2 z^\2\,\del_\1 - (z^\1)^2 (x^\1)^2\,\del_\1,\\[4pt]
Y^{\mathrm{IV}}_{13}
&:= -x^\1\,\del_\2 + z^\2 (x^\2)^2\,\del_\1.
\end{aligned}
\]

We have
\begin{enumerate}
\item
\[
[X_2^{\mathrm{I}},X_3^{\mathrm{I}}]=0,\qquad
[X_2^{\mathrm{II}},X_3^{\mathrm{II}}]-[X_2^{\mathrm{III}},X_3^{\mathrm{III}}]+[X_2^{\mathrm{IV}},X_3^{\mathrm{IV}}]=0,
\]
\item
for each choice $?\in\{\mathrm{I},\mathrm{II},\mathrm{III},\mathrm{IV}\}$ and $i\in\{2,3\}$,
\[
[X_1^{?},X_i^{?}]=\dbar Y^{?}_{1i}.
\]
\end{enumerate}
\end{lemma}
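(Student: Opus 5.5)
This is a direct verification in the explicit model of $\lie{witt}_2$ from the presentation at the end of section \ref{s:witt}. For part (1), all the $X_2,X_3$ are polynomial vector fields in degree zero, so the bracket is the ordinary Jacobi bracket of vector fields on the formal $2$-disk. For $[X_2^{\mathrm{I}},X_3^{\mathrm{I}}]=[(z^\1)^3\del_\1,(z^\2)^2\del_\2]$, each coefficient is independent of the other field's variable, so the bracket vanishes. For the alternating combination we compute each bracket separately:
\begin{align*}
[(z^\1)^2\del_\1, z^\1(z^\2)^2\del_\2] &= (z^\1)^2(z^\2)^2\del_\2 - 2z^\1 (z^\2)^2\cdot z^\1\del_\1\cdot 0 ,
\end{align*}
and similarly for the others. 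More precisely, for $[a\del_\1+b\del_\2,\ c\del_\1+e\del_\2]$ we use $\bigl(a\del_\1 e - \ldots\bigr)$, and only the terms $a\,\del_\1(\cdot)$ and $e\,\del_\2(\cdot)$ survive. The expected result is $(z^\1)^2(z^\2)^2\del_\2 - 2(z^\1)^2(z^\2)^2\del_\1$-type monomials in each bracket, with coefficients that cancel in the signed sum. We carry this out monomial by monomial. This step is purely mechanical.

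For part (2), the key tool is the Leibniz rule for the bracket with $P\del_\1$. Since $\del_\ii$ of $P$ is determined by the formula $\del_\ii P=-2x^\ii P$ (the case $k+\ell=1$ of the displayed derivative formula), we get
\[
[P\del_\1, f\del_\jj] = P\,(\del_\1 f)\del_\jj - f\,(\del_\jj P)\del_\1 = P\bigl(\del_\1 f\,\del_\jj + 2 f x^\jj \del_\1\bigr).
\]
The right-hand side is $P$ times a polynomial in $z,x$. We then compute $\dbar Y$ for each proposed $Y$ using $\dbar z=0$ and $\dbar x^\1=-z^\2P$, $\dbar x^\2=z^\1P$, together with the Leibniz rule; this too yields $P$ times a polynomial. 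It remains to compare the two polynomials in the ring $\C\llbracket z\rrbracket[x]/(z^\1x^\1+z^\2x^\2=1)$. In practice, the comparison amounts to repeatedly substituting $1=z^\1x^\1+z^\2x^\2$ to match terms. For example, in case I with $i=3$ we have $[P\del_\1,(z^\2)^2\del_\2]=2(z^\2)^2x^\2P\del_\1$. On the other side, $\dbar(-x^\1\del_\1)=z^\2P\del_\1$ and $\dbar(-z^\2x^\1x^\2\del_\1)=-z^\2(-z^\2x^\2+z^\1x^\1)P\del_\1$. Their sum is $z^\2(1-z^\1x^\1+z^\2x^\2)P\del_\1=2(z^\2)^2x^\2P\del_\1$, which matches.

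The main obstacle is bookkeeping rather than any conceptual difficulty. There are eight identities, each requiring reduction modulo the relation and careful signs. The sign conventions for degree-one elements matter: $P$ has degree one, so we must use the stated graded bracket with the $(-1)^{|I|}$ sign, and $\dbar$ acts as a derivation with $\dbar P=0$. To keep the computation systematic, we plan to normalize every expression to the form $P\cdot g(z,x)$ and to reduce $g$ using the relation $z^\1x^\1+z^\2x^\2=1$. If a mismatch persists after reduction, we would revisit the sign convention in $[P\del_\1,\cdot]$ before doubting the stated $Y$'s.
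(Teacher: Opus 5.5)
Your verification is correct. With $[P\del_\1, f\del_\jj] = P\,(\del_\1 f\,\del_\jj + 2 f x^\jj\del_\1)$ and $\dbar(g\,\del_\jj) = \bigl(z^\1\,\partial g/\partial x^\2 - z^\2\,\partial g/\partial x^\1\bigr)P\,\del_\jj$, all eight identities in (2) hold. Cases III, IV, and II with $i=3$ match term by term. The other three (I with $i=2,3$, II with $i=2$) need one use of $z^\1x^\1+z^\2x^\2=1$, as in your worked example.

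The paper argues differently. It writes $P$ formally as $\dbar(x^\2/z^\1)$ or $\dbar(-x^\1/z^\2)$, pulls $\dbar$ out past $\del_\ii$ and the $z$-monomials, and simplifies. That route \emph{produces} the $Y$'s rather than checking them. The cost is that these primitives exist only after inverting $z^\1$ or $z^\2$, and II with $i=3$ and both IV identities use both primitives at once. So the paper tacitly relies on $\cJ_2$ embedding in that localization with $\dbar$ commuting with $\del_\ii$ there. Your computation never leaves $\cJ_2$ and needs only the defining relation, but it does not explain where the $Y$'s come from.

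Two corrections to your write-up.

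First, in part (1) the $\del_\1$-terms are not of type $(z^\1)^2(z^\2)^2\del_\1$. One gets
$[X_2^{\mathrm{II}},X_3^{\mathrm{II}}]=(z^\1)^2(z^\2)^2\del_\2$,
$[X_2^{\mathrm{III}},X_3^{\mathrm{III}}]=2(z^\1)^2(z^\2)^2\del_\2-(z^\1)^3z^\2\del_\1$, and
$[X_2^{\mathrm{IV}},X_3^{\mathrm{IV}}]=(z^\1)^2(z^\2)^2\del_\2-(z^\1)^3z^\2\del_\1$.
The signed sum vanishes because $1-2+1=0$ for the $\del_\2$-coefficients and $0+1-1=0$ for the $\del_\1$-coefficients.

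Second, drop your remark about using the displayed bracket ``with the $(-1)^{|I|}$ sign''. Taken literally for $|I|=1$, $|J|=0$, it flips the sign of the $f\,\del_\jj P\,\del_\1$ term, and then none of the identities in (2) hold. That formula is not even graded antisymmetric. The sign you actually used, $P\,\del_\1 f\,\del_\jj - f\,\del_\jj P\,\del_\1$, is the correct one, and it is the one the paper's proof uses.
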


\begin{proof}
The first part is a direct computation:
\[
[X_2^{\mathrm{I}},X_3^{\mathrm{I}}]=0,
\]
\[
[X_2^{\mathrm{II}},X_3^{\mathrm{II}}]
= \bigl[(z^\1)^2\del_\1,\, z^\1 (z^\2)^2\del_\2\bigr]
= (z^\1)^2 (z^\2)^2 \,\del_\2,
\]
\[
[X_2^{\mathrm{III}},X_3^{\mathrm{III}}]
= \bigl[z^\1 z^\2\del_\1,\, (z^\1)^2 z^\2\del_\2\bigr]
= 2 (z^\1)^2 (z^\2)^2 \,\del_\2 - (z^\1)^3 z^\2 \,\del_\1,
\]
\[
[X_2^{\mathrm{IV}},X_3^{\mathrm{IV}}]
= \bigl[(z^\1)^2 z^\2\del_\1,\, z^\1 z^\2\del_\2\bigr]
= (z^\1)^2 (z^\2)^2 \,\del_\2 - (z^\1)^3 z^\2 \,\del_\1.
\]

To prove (b) we use the following observations (each bracket is rewritten as a $\dbar$--exact expression):

\[
\begin{aligned}
[X_1^{\mathrm{I}},X_2^{\mathrm{I}}]
&= \dbar\Bigl(3 (z^\1)^2 \frac{x^\2}{z^\1}\,\del_\1
- (z^\1)^3 \del_\1\bigl(\tfrac{x^\2}{z^\1}\bigr)\,\del_\1\Bigr),
\\[4pt]
[X_1^{\mathrm{I}},X_3^{\mathrm{I}}]
&= \dbar\Bigl((z^\2)^2\del_\2\bigl(\tfrac{x^\1}{z^\2}\bigr)\,\del_\1\Bigr),
\end{aligned}
\]

\[
\begin{aligned}
[X_1^{\mathrm{II}},X_2^{\mathrm{II}}]
&=2 z^\1 P\,\del_\1 - (z^\1)^2 \del_\1P\,\del_\1\\
&= \dbar\Bigl(2 z^\1 \frac{x^\2}{z^\1}\,\del_\1
- (z^\1)^2 \del_\1\bigl(\tfrac{x^\2}{z^\1}\bigr)\,\del_\1\Bigr)\\
&= \dbar\bigl(3 x^\2\,\del_\1 + z^\1 x^\1 x^\2\,\del_\1\bigr),
\end{aligned}
\]

\[
\begin{aligned}
[X_1^{\mathrm{II}},X_3^{\mathrm{II}}]
&= (z^\2)^2 P\,\del_\2 - z^\1 (z^\2)^2 \del_\2P\,\del_\1\\
&= \dbar\Bigl((z^\2)^2 \bigl(-\tfrac{x^\1}{z^\2}\bigr)\,\del_\2
- z^\1 (z^\2)^2 \del_\2\bigl(\tfrac{x^\2}{z^\1}\bigr)\,\del_\1\Bigr)\\
&= \dbar\bigl(-z^\2 x^\1\,\del_\2 + (z^\2)^2 (x^\2)^2\,\del_\1\bigr),
\end{aligned}
\]

\[
\begin{aligned}
[X_1^{\mathrm{III}},X_2^{\mathrm{III}}]
&= z^\2 P\,\del_\1 - z^\1 z^\2 \del_\1P\,\del_\1\\
&= \dbar\Bigl(z^\2 \bigl(-\tfrac{x^\1}{z^\2}\bigr)\,\del_\1
- z^\1 z^\2 \del_\1\bigl(-\tfrac{x^\1}{z^\2}\bigr)\,\del_\1\Bigr)\\
&= \dbar\bigl(-x^\1\,\del_\1 - z^\1 (x^\1)^2\,\del_\1\bigr),
\end{aligned}
\]

\[
\begin{aligned}
[X_1^{\mathrm{III}},X_3^{\mathrm{III}}]
&= 2 z^\2 z^\1 P\,\del_\2 - (z^\1)^2 z^\2 \del_\2P\,\del_\1\\
&= \dbar\Bigl(2 z^\2 z^\1 \tfrac{x^\2}{z^\1}\,\del_\2
- (z^\1)^2 z^\2 \del_\2\bigl(\tfrac{x^\2}{z^\1}\bigr)\,\del_\1\Bigr)\\
&= \dbar\bigl(2 z^\2 x^\2\,\del_\2 + z^\1 z^\2 (x^\2)^2\,\del_\1\bigr),
\end{aligned}
\]

\[
\begin{aligned}
[X_1^{\mathrm{IV}},X_2^{\mathrm{IV}}]
&= 2 z^\1 z^\2 P\,\del_\1 - (z^\1)^2 z^\2 \del_\1P\,\del_\1\\
&= \dbar\Bigl(2 z^\1 z^\2 \tfrac{x^\2}{z^\1}\,\del_\1
- (z^\1)^2 z^\2 \del_\1\bigl(-\tfrac{x^\1}{z^\2}\bigr)\,\del_\1\Bigr)\\
&= \dbar\bigl(2 x^\2 z^\2\,\del_\1 - (z^\1)^2 (x^\1)^2\,\del_\1\bigr),
\end{aligned}
\]

\[
\begin{aligned}
[X_1^{\mathrm{IV}},X_3^{\mathrm{IV}}]
&= z^\2 P\,\del_\2 - z^\1 z^\2 \del_\2P\,\del_\1\\
&= \dbar\Bigl(z^\2 \bigl(-\tfrac{x^\1}{z^\2}\bigr)\,\del_\2
- z^\1 z^\2 \del_\2\bigl(\tfrac{x^\2}{z^\1}\bigr)\,\del_\1\Bigr)\\
&= \dbar\bigl(-x^\1\,\del_\2 + z^\2 (x^\2)^2\,\del_\1\bigr).
\end{aligned}
\]

These identities establish (b).
\end{proof}

\medskip

Define
\[
\mathbb{X}:= X_1^{\mathrm{I}}\wedge X_2^{\mathrm{I}}\wedge X_3^{\mathrm{I}},
\qquad
\tilde{\mathbb{X}}:= X_1^{\mathrm{II}}\wedge X_2^{\mathrm{II}}\wedge X_3^{\mathrm{II}}
- X_1^{\mathrm{III}}\wedge X_2^{\mathrm{III}}\wedge X_3^{\mathrm{III}}
+ X_1^{\mathrm{IV}}\wedge X_2^{\mathrm{IV}}\wedge X_3^{\mathrm{IV}}.
\]

\begin{prop}
We can extend $\mathbb{X},\tilde{\mathbb{X}}\in \bigwedge^{3}\mathfrak{witt}_{2}$ to
$\mathbf{X},\tilde{\mathbf{X}}\in \bigwedge^{\bullet}\mathfrak{witt}_{2}$ such that
\[
(\dbar+d^{\mathrm{Lie}})\mathbf{X}=(\dbar+d^{\mathrm{Lie}})\tilde{\mathbf{X}}=0
\]
and
\[
\det\begin{pmatrix}
(\mathrm{ch}_1)^3(\mathbf{X}) & (\mathrm{ch}_1\cdot \mathrm{ch}_2)(\mathbf{X})\\[4pt]
(\mathrm{ch}_1)^3(\tilde{\mathbf{X}}) & (\mathrm{ch}_1\cdot \mathrm{ch}_2)(\tilde{\mathbf{X}})
\end{pmatrix}\neq 0.
\]
\end{prop}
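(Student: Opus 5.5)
The plan is to build $\mathbf{X}$ by adding lower-arity correction terms to $\mathbb{X}$, and likewise for $\tilde{\mathbf{X}}$. Each correction is chosen to cancel, one arity at a time, the failure of closedness coming from the previous term. The chain complex $\bigwedge^\bullet\lie{witt}_2$ has differential $\dbar+d^{\mathrm{Lie}}$, where $d^{\mathrm{Lie}}$ lowers wedge length by one through brackets. The element $\mathbb{X}$ has total degree $-3+1=-2$, because $X_1=P\del_\1$ has internal degree one.

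\emph{Arity three and two.} The elements $X_2,X_3$ are $\dbar$-closed and $\dbar X_1=\dbar(P)\del_\1=0$, so $\dbar\mathbb{X}=0$. Next,
\[
d^{\mathrm{Lie}}\mathbb{X}=\pm[X_1,X_2]\wedge X_3\pm[X_1,X_3]\wedge X_2\pm[X_2,X_3]\wedge X_1 .
\]
By part (1) of the preceding lemma the last term vanishes for $\mathbb{X}$. For $\tilde{\mathbb{X}}$ it vanishes after summing the three pieces, since their $X_1$ all equal $P\del_\1$. By part (2), $[X_1,X_i]=\dbar Y_{1i}$. We therefore set
\[
\mathbf{X}=\mathbb{X}\mp Y_{12}\wedge X_3\mp Y_{13}\wedge X_2
\]
with the signs fixed so that $\dbar$ of the new arity-two terms cancels $d^{\mathrm{Lie}}\mathbb{X}$. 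Here $X_i$ is $\dbar$-closed and the $Y$'s have degree $0$, so $\dbar(Y\wedge X)=\dbar Y\wedge X$. The same recipe gives $\tilde{\mathbf{X}}$.

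\emph{Arity one.} The remaining obstruction is $d^{\mathrm{Lie}}$ of the arity-two correction, namely the arity-one element $\pm[Y_{12},X_3]\pm[Y_{13},X_2]$ of degree $0$. I must show this equals $\dbar Z$ for some degree $-1$ element $Z$. Since $\lie{witt}_2$ vanishes in degree $-1$, this forces $Z=0$, so the element has to vanish identically. The finer point is that the arity-one component of $(\dbar+d^{\mathrm{Lie}})$ applied to the chain must be zero in $\lie{witt}_2^0$. I expect this to hold because the total is a $\dbar$-closed degree-zero element, i.e.\ a formal vector field, and it can be read off as the coefficient obtained by contracting the $P$'s. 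Concretely, I will check that
\[
[Y_{12},X_3]-[Y_{13},X_2]\in \cJ_2^0\otimes\C\{\del_\1,\del_\2\}
\]
vanishes by substituting the explicit formulas and using $z^\1x^\1+z^\2x^\2=1$ together with $\del_\ii x^\jj=-x^\ii x^\jj$. If it does not vanish, one can modify $Y_{1i}$ by $\dbar$-closed degree-zero elements, i.e.\ formal vector fields. That freedom will be used to kill the polynomial remainder, and for $\tilde{\mathbf{X}}$ the three summands may need to be combined before the remainder vanishes. This arity-one bookkeeping, with its signs, is the main obstacle.

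\emph{Evaluation.} Both $\mathrm{ch}_1^3$ and $\mathrm{ch}_1\mathrm{ch}_2$ have total degree $2$, so their components are $\rho\circ(\text{products of }\mathrm{Tr}\,\lie{c}_k)$ on arities $3,2,1$. I will evaluate them on $\mathbf{X}$ using the explicit formulas \eqref{eq:ch13} and \eqref{eq:ch1ch2}, plus the arity-two components, which involve $\lie{c}_2$ and pair against the $Y\wedge X$ terms. Each evaluation reduces to residues of the form $\op{Res}((x^\1)^k(x^\2)^\ell P\,d^2z\cdot z^{m})$. These are computed by the module rule $(z^\1)^{k_1}(z^\2)^{k_2}\cdot\del_\1^{(\ell_1)}\del_\2^{(\ell_2)}P$ and normalized by $\op{Res}(P\,d^2z)=1$. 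For example, on $\mathbb{X}$ the Jacobians are $\mathrm{Tr}\,JX_1=\del_\1P$, $\mathrm{Tr}\,JX_2=3(z^\1)^2$ and $\mathrm{Tr}\,JX_3=2z^\2$, so $\mathrm{ch}_1^3(\mathbb{X})$ is a nonzero multiple of $\op{Res}(\del_\1P\,d(z^\1)^2\,dz^\2)$, which is nonzero. The final step is to assemble the resulting rational numbers into the $2\times2$ matrix and check that its determinant is nonzero.
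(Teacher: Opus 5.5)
\textbf{There is a genuine gap in the arity-one step.} Your overall strategy matches the paper's: correct $\mathbb{X}$ by $Y_{12}\wedge X_3-Y_{13}\wedge X_2$, deal with arity one, then evaluate. The arity-one step fails as planned, though.

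The obstruction $[Y_{12},X_3]-[Y_{13},X_2]$ is indeed $\dbar$-closed, by Jacobi and $[X_2,X_3]=0$. So it is a formal vector field. But being a formal vector field is not the same as being zero, and here it is not zero. In case $\mathrm{I}$, write $u=z^\1x^\1$ and $v=z^\2x^\2$, so $u+v=1$. Then $[Y^{\mathrm I}_{12},X^{\mathrm I}_3]=z^\1(4v^2+2uv^2)\del_\1$ and $[Y^{\mathrm I}_{13},X^{\mathrm I}_2]=-z^\1(3u+3uv+u^2+2u^2v)\del_\1$. Hence the obstruction is $\pm 4z^\1\del_\1\neq 0$.

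Your fallback cannot remove it. Replace $Y_{1i}$ by $Y_{1i}+V_{1i}$ with $V_{1i}\in\lie{w}_2$, and let $f_{1i}$ be the $\del_\1$-component of $V_{1i}$. The obstruction then changes by $[V_{12},(z^\2)^2\del_\2]-[V_{13},(z^\1)^3\del_\1]$. Its $\del_\1$-coefficient is $-(z^\2)^2\del_\2 f_{12}-3(z^\1)^2f_{13}+(z^\1)^3\del_\1 f_{13}$. This lies in the ideal $((z^\1)^2,(z^\2)^2)$, which does not contain $z^\1$.

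\textbf{The missing idea.} Allow an extra arity-two term $W\in\wedge^2\lie{w}_2$ that is not tied to the $X_i$. Any such $W$ has total degree $-2$ and satisfies $\dbar W=0$, so it does not disturb the arity-two cancellation. You only need $d^{\mathrm{Lie}}W$ to cancel the obstruction. This is possible because the obstruction is a commutator in $\lie{w}_2$: explicitly $[\del_\1,(z^\1)^2\del_\1]=2z^\1\del_\1$. That is exactly why the paper adds $2\,\del_\1\wedge(z^\1)^2\del_\1$ to $\mathbf{X}$, and $2\,\del_\1\wedge(z^\1)^2\del_\1-4\,\del_\2\wedge(z^\2)^2\del_\2$ to $\tilde{\mathbf{X}}$.

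\textbf{A smaller error in the evaluation step.} $\mathrm{ch}_1^3$ and $\mathrm{ch}_1\mathrm{ch}_2$ are homogeneous trilinear cochains, since their defining condition is $\sum_p p\,i_p=3$. Here $\lie{c}_2$ enters only through the trilinear product $\op{Tr}\lie{c}_1*\op{Tr}\lie{c}_2$. So there are no arity-two components pairing with the $Y\wedge X$ terms. All corrections, including $W$, pair to zero, and $\mathrm{ch}(\mathbf{X})=\mathrm{ch}(\mathbb{X})$. The determinant therefore reduces to the four trilinear residue computations on $X^{\mathrm{I}},\dots,X^{\mathrm{IV}}$.
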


\begin{proof}
Set
\[
\mathbf{X}
= \mathbb{X}
- \bigl(Y_{12}^{\mathrm{I}}\wedge X_3^{\mathrm{I}} - Y_{13}^{\mathrm{I}}\wedge X_2^{\mathrm{I}}\bigr)
+ 2\,\del_\1\wedge (z^\1)^2\del_\1,
\]
\[
\begin{aligned}
\tilde{\mathbf{X}}
&= \tilde{\mathbb{X}}
- \bigl(Y_{12}^{\mathrm{II}}\wedge X_3^{\mathrm{II}} - Y_{13}^{\mathrm{II}}\wedge X_2^{\mathrm{II}}\bigr)
+ \bigl(Y_{12}^{\mathrm{III}}\wedge X_3^{\mathrm{III}} - Y_{13}^{\mathrm{III}}\wedge X_2^{\mathrm{III}}\bigr)\\
&\quad - \bigl(Y_{12}^{\mathrm{IV}}\wedge X_3^{\mathrm{IV}} - Y_{13}^{\mathrm{IV}}\wedge X_2^{\mathrm{IV}}\bigr)
+ 2\,\del_\1\wedge (z^\1)^2\del_\1 - 4\,\del_\2\wedge (z^\2)^2\del_\2.
\end{aligned}
\]

A direct evaluation of the Chern-character pairings gives:
\[
(\mathrm{ch}_1)^3(\mathbf{X}) = (\mathrm{ch}_1)^3\bigl(X_1^{\mathrm{I}}\wedge X_2^{\mathrm{I}}\wedge X_3^{\mathrm{I}}\bigr)
= -3!\cdot 2!,
\]
\[
(\mathrm{ch}_1\cdot \mathrm{ch}_2)(\mathbf{X})
= (\mathrm{ch}_1\cdot \mathrm{ch}_2)\bigl(X_1^{\mathrm{I}}\wedge X_2^{\mathrm{I}}\wedge X_3^{\mathrm{I}}\bigr)
= 3!\cdot 2!,
\]
and for the other summands one finds
\[
\begin{gathered}
(\mathrm{ch}_1)^3\bigl(X_1^{\mathrm{II}}\wedge X_2^{\mathrm{II}}\wedge X_3^{\mathrm{II}}\bigr) = -4, \quad
(\mathrm{ch}_1\cdot \mathrm{ch}_2)\bigl(X_1^{\mathrm{II}}\wedge X_2^{\mathrm{II}}\wedge X_3^{\mathrm{II}}\bigr)=8,\\[4pt]
(\mathrm{ch}_1)^3\bigl(X_1^{\mathrm{III}}\wedge X_2^{\mathrm{III}}\wedge X_3^{\mathrm{III}}\bigr) = 2, \quad
(\mathrm{ch}_1\cdot \mathrm{ch}_2)\bigl(X_1^{\mathrm{III}}\wedge X_2^{\mathrm{III}}\wedge X_3^{\mathrm{III}}\bigr) = -6,\\[4pt]
(\mathrm{ch}_1)^3\bigl(X_1^{\mathrm{IV}}\wedge X_2^{\mathrm{IV}}\wedge X_3^{\mathrm{IV}}\bigr) = 2, \quad
(\mathrm{ch}_1\cdot \mathrm{ch}_2)\bigl(X_1^{\mathrm{IV}}\wedge X_2^{\mathrm{IV}}\wedge X_3^{\mathrm{IV}}\bigr) = -2,
\end{gathered}
\]
which yields
\[
(\mathrm{ch}_1)^3(\tilde{\mathbf{X}}) = -4,\qquad
(\mathrm{ch}_1\cdot \mathrm{ch}_2)(\tilde{\mathbf{X}}) = 12.
\]
This completes the proof.
\end{proof}

We have shown.

\begin{corollary}
\begin{equation}
\dim \mathbb{H}^2_{\mathrm{Lie}}\left(\lie{witt}_{2}\right)\geq 2.
\end{equation}
\end{corollary}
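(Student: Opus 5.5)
The corollary is a formal consequence of the preceding proposition together with Theorem~\ref{thm:weakchern}, and the plan is to make the duality between chains and cochains explicit. By Theorem~\ref{thm:weakchern}, $\op{ch}_1^3$ and $\op{ch}_1\op{ch}_2$ are cocycles of total degree $2$ in $C^\bu(\lie{witt}_2;\C)$, so they define classes in $\HH^2_{Lie}(\lie{witt}_2)$. The preceding proposition supplies chains $\mathbf{X},\tilde{\mathbf{X}}$ of total degree $-2$ in the Chevalley--Eilenberg chain complex $\bigwedge^\bu \lie{witt}_2$, closed for $\dbar+d^{\rm Lie}$.

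The key step is to note that evaluation
\[
\langle \varphi, \mathbf{Y}\rangle = \varphi(\mathbf{Y})
\]
descends to a well-defined pairing between total cohomology and total homology. The Chevalley--Eilenberg cochain differential (trivial coefficients) is, by definition, the transpose of the chain differential $\dbar+d^{\rm Lie}$. Hence if $\varphi=(\dbar+\d^{\rm Lie})\psi$ is a coboundary and $\mathbf{Y}$ is a cycle, then
\[
\varphi(\mathbf{Y})=\pm\psi\bigl((\dbar+d^{\rm Lie})\mathbf{Y}\bigr)=0 .
\]
One has to make sure continuity/completion issues do not interfere. The chains $\mathbf{X},\tilde{\mathbf{X}}$ are finite sums of wedges of honest elements, and the cochains are continuous (residue) functionals, so the evaluation is a finite computation and the transpose identity holds termwise. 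This is the only point needing care, and I expect it to be routine.

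Now suppose a linear combination $a\,\op{ch}_1^3+b\,\op{ch}_1\op{ch}_2$ is zero in $\HH^2_{Lie}(\lie{witt}_2)$. Pairing it with $\mathbf{X}$ and with $\tilde{\mathbf{X}}$ gives the linear system
\[
\begin{pmatrix}
\op{ch}_1^3(\mathbf{X}) & \op{ch}_1\op{ch}_2(\mathbf{X})\\
\op{ch}_1^3(\tilde{\mathbf{X}}) & \op{ch}_1\op{ch}_2(\tilde{\mathbf{X}})
\end{pmatrix}
\begin{pmatrix} a\\ b\end{pmatrix}=0 .
\]
The determinant of this matrix is nonzero by the proposition; explicitly it is $(-12)(12)-(12)(-4)=-96$. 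Therefore $a=b=0$, so the two classes are linearly independent and $\dim \HH^2_{Lie}(\lie{witt}_2)\geq 2$. This also establishes part (1) of Theorem~\ref{thm:2dgf}, since $\C[\op{ch}_1,\op{ch}_2]_6$ is spanned by $\op{ch}_1^3$ and $\op{ch}_1\op{ch}_2$.
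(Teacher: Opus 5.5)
Your proposal is correct and takes the same approach as the paper. The paper deduces the corollary directly from the preceding proposition (the $(\dbar+d^{\mathrm{Lie}})$-closed chains $\mathbf{X},\tilde{\mathbf{X}}$ whose matrix of pairings with $\mathrm{ch}_1^3$ and $\mathrm{ch}_1\mathrm{ch}_2$ has nonzero determinant) together with Theorem~\ref{thm:weakchern}; you additionally spell out the standard fact that evaluating cocycles on cycles descends to cohomology, and your determinant $-96$ agrees with the paper's values.
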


This already indicates a departure from the $d=1$ case where central extensions of $\lie{witt}_1 = \C(\!(z)\!) \del_z$ are
unique up to isomorphism; the unique central extension $H^2_{Lie}(\lie{witt}_1) = \C$ being the famous Virasoro.
In the remainder of this section we prove that the above inequality is an equality.

\subsection{Second cohomology of $\lie{witt}_2$}

Given any (non-negatively graded) dg Lie algebra $\lie{g}$, whose internal differential is denoted $\dbar$, there is a spectral sequence 
\begin{equation}\label{}
  H_{Lie}^\bu (H^\bu_{\dbar}(\lie{g})) \; \implies \; \HH_{Lie}^\bu(\lie{g})  
\end{equation}
where the second page is the Lie algebra cohomology of the \textit{graded} Lie algebra $H^\bu_{\dbar}(\lie{g})$---the
one we get from taking the internal cohomology of $\lie{g}$.
The spectral sequence converges to the Lie algebra cohomology of the dg Lie algebra.

\begin{lemma}\label{lem:sswitt}
    The cohomology $\mathbb{H}^{\bullet}_{\mathrm{Lie}}\left(\lie{witt}_{2}\right)$ is computed by a spectral sequence
    whose second page can be identified with the Lie algebra 
    cohomology of formal vector fields $\lie{w}_2$ on the $2$-disk with coefficients in a particular $\lie{w}_2$-module:
    \begin{equation}\label{}
      H_{Lie}^\bu\left(\lie{w}_2 \, ; \, \op{S}^\bu(\Omega^1 \otimes_{\C\llbracket z^1, z^2\rrbracket} \Omega^2)\right) \implies \HH_{Lie}^\bu(\lie{witt}_2).
    \end{equation}
    Here, $\op{S}^\bu(-)$ is the ordinary (not graded) symmetric algebra functor.
    This spectral sequence respects the ring structure.
\end{lemma}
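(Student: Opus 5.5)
We plan to use the filtration of the Chevalley--Eilenberg complex $C^\bu(\lie{witt}_2)$ coming from the internal grading, i.e.\ the spectral sequence of the double complex whose horizontal differential is the linear dual of $\dbar$ and whose vertical differential is $\d^{\rm Lie}$. First we take $\dbar$-cohomology. Since $C^\bu(\lie{witt}_2) = \Hat{\op{S}}(\lie{witt}_2^*[-1])$ (continuous duals) and we work over $\C$, the Künneth theorem identifies the first page with $\Hat{\op{S}}(H^\bu_{\dbar}(\lie{witt}_2)^*[-1])$. The differential induced on this page is the Chevalley--Eilenberg differential of the graded Lie algebra $H^\bu_{\dbar}(\lie{witt}_2)$, so the second page is $H^\bu_{Lie}(H^\bu_{\dbar}(\lie{witt}_2))$. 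This is the general spectral sequence quoted just before the lemma. Convergence holds because $\lie{witt}_2$ is concentrated in degrees $0,1$, so in each fixed total degree the filtration is bounded once we also filter by polynomial degree. We will address this in the second step.

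Next we invoke Lemma \ref{prop:dbarwitt}, which gives $H^\bu(\lie{witt}_2)\cong \lie{w}_2\ltimes \sP[-1]$, where $\sP[-1]$ is an abelian ideal placed in degree one. For a semidirect product $\lie{h}\ltimes M$ with $M$ abelian, the Chevalley--Eilenberg complex is
\[
C^\bu(\lie{h}\ltimes M)\cong C^\bu(\lie{h};\,C^\bu(M)) = C^\bu(\lie{h};\,\Hat{\op{S}}(M^*[-1])).
\]
The internal differential of $C^\bu(M)$ vanishes because the bracket on $M$ is zero and $\d_M=0$ after taking cohomology. Here $M=\sP[-1]$, so $M^*[-1]=\sP^*[1-1]=\sP^*$ sits in degree $0$. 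The graded symmetric algebra on a degree-$0$ space is the ordinary symmetric algebra, and this is exactly why $\op{S}^\bu$ in the statement is non-graded. By the computation following Lemma \ref{prop:dbarwitt}, we have $\sP^*\cong \Omega^2\otimes_{\C\llbracket z^\1,z^\2\rrbracket}\Omega^1$ as a $\lie{w}_2$-module. This yields the stated $E_2$ page, $H^\bu_{Lie}(\lie{w}_2;\op{S}^\bu(\Omega^1\otimes\Omega^2))$.

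Compatibility with products follows because the filtration by internal degree is multiplicative on the commutative dg algebra $C^\bu(\lie{witt}_2)$. The identifications above are algebra isomorphisms: Künneth is multiplicative, and the semidirect-product decomposition is an isomorphism of cdgas.

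The main obstacle will be the topological and convergence issues. The duals must be continuous, and symmetric powers must be completed appropriately, so that Künneth holds and $(\Omega^1\otimes\Omega^2)^{\otimes n}$ is the correct continuous dual of $\sP^{\otimes n}$. We will handle this by using the additional grading by symmetric degree $n$ in $\op{S}^n(\sP^*)$, together with the Euler vector field grading (weight) in $\lie{w}_2$. Each weight space is finite dimensional, so in each weight and total degree the complexes are finite and the spectral sequence degenerates after finitely many pages. Convergence then follows without requiring any further input.
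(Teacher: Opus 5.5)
Your identification of the $E_2$ page follows the paper's argument exactly. You take $\dbar$-cohomology first, so that $E_1=C^\bu(\lie{w}_2\ltimes\sP[-1])$. You use that $\sP[-1]$ is an abelian ideal to rewrite this as $C^\bu(\lie{w}_2;\op{S}(\sP^*))$ with $\sP^*$ in degree $0$; your remark that this is why $\op{S}^\bu$ is ungraded is a point the paper leaves implicit. Finally you invoke $\sP^*\cong\Omega^2\otimes\Omega^1$.

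One correction is needed in this part. The filtration that puts $\dbar$-cohomology on $E_1$ is the one by symmetric (Chevalley--Eilenberg) degree, as in the paper: $\dbar^*$ preserves it and $\d^{\rm Lie}$ raises it by one. Filtering by internal degree does the reverse, because $\d^{\rm Lie}$ preserves internal degree while $\dbar^*$ raises it. That filtration has $E_0$-differential $\d^{\rm Lie}$ and gives the other spectral sequence of the double complex. Your ``first we take $\dbar$-cohomology'' shows what you intend, and your multiplicativity argument works verbatim for the symmetric-degree filtration. But the label ``internal degree'' is wrong in both places you use it.

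The convergence paragraph, which goes beyond the paper (the paper does not discuss convergence at all), fails as written. The weight spaces are not finite dimensional.
\begin{itemize}
\item At chain level, the torus weight $(0,0)$ part of $\cJ_2^0$ contains every power $(z^\1x^\1)^m$.
\item Even on $E_1$, total degree $k$ contains $\op{S}^k(\lie{w}_2^*[-1])\otimes\op{S}^j(\sP^*)$ for every $j\ge 0$. The weights of $\lie{w}_2^*$ are unbounded below and those of $\sP^*$ unbounded above, so each fixed weight occurs infinitely often, already for $j=1$.
\end{itemize}
Nor is the filtration bounded in a fixed total degree $k$: the symmetric degree is $k+j$ with $j$ unbounded, and $d_r$ raises $j$ by $r-1$. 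So ``finite complexes, degeneration after finitely many pages'' is not available.

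What does hold is the following. Cochains of fixed total degree form a product over $j$, so the filtration is complete, Hausdorff and exhaustive, and the spectral sequence converges conditionally. For the use the paper makes of the lemma, namely bounding $\dim\HH^2_{Lie}(\lie{witt}_2)$, this suffices once the $E_2$-terms in total degree $2$ are finite dimensional and vanish for all but finitely many $j$. In that case any total-degree-$2$ cocycle lying deep enough in the filtration can be pushed to a coboundary, with the corrections converging by completeness. The paper's later computations supply exactly this finiteness.
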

\begin{proof}
  Consider the filtration on the Chevalley--Eilenberg complex $C^\bu(\lie{witt}_2 ; \C)$ by graded symmetric degree.
  The resulting spectral sequence has $E_1$-page 
  \begin{equation}\label{}
    C^\bu  \left( H^\bu (\lie{witt}_2) \right) .
  \end{equation}
  This is the Chevalley--Eilenberg complex of the graded Lie algebra $H^\bu(\lie{witt}_2)$.
  We have described the (internal) cohomology of $\lie{witt}_2$ in proposition \ref{prop:dbarwitt}.
  It is a semi-direct product graded Lie algebra concentrated in degrees zero and one:
  \begin{equation}\label{}
    H^\bu(\lie{witt}_2) \simeq \lie{w}_2 \ltimes \sP[-1]
  \end{equation}
  The $\lie{w}_2$-module structure on $\sP = H^1_{\dbar}(\lie{witt}_2)$ is presented in proposition \ref{prop:dbarwitt}. 

  The graded vector space underlying the $E_1$-page is therfore
  \begin{align*}
    \op{Sym}\left(\left(H^\bu_{\dbar}(\lie{witt}_2)[1]\right)^* \right) & \simeq \op{Sym}\left((\lie{w}_2[1])^*\right)
    \otimes \op{Sym}(\sP^*) .
  \end{align*}
  Frurthermore, as $\lie{w}_2$-modules, $\sP^*$ is the tensor module $\Omega^2 \otimes_{\C\llbracket z^1,z^1\rrbracket} \Omega^1$.
  Thus, the $E_1$-page is of the form $C_{Lie}^\bu(\lie{w}_2 \, ; \, \Omega^2 \otimes_{\C\llbracket z^1 , z^2\rrbracket}
  \Omega^1)$.
  The $E_1$-differential is readily seen to be the Gelfand--Fuks differential with coefficients.
  The assertion follows.
\end{proof}

As a corollary of this result, and the form of the spectral sequence, we see that 
\begin{equation}\label{eq:decomp}
  \dim \HH^2_{Lie}(\lie{witt}_2)  \leq \dim \left(H^2(\lie{w}_2) \oplus H^2(\lie{witt}_2 ; \Omega^1 \otimes \Omega^2) \oplus \bigoplus_{j >
  1} H^2(\lie{w}_2 ; \op{S}^j (\Omega^1 \otimes_{\sO} \Omega^2) \right) .
\end{equation}
It is a classic result of Gelfand and Fuks that $H^j_{Lie}(\lie{w}_2) = 0$ for $j \leq 4$ \cite{FuksBook}.
In particular, the first summand on the right hand side of \eqref{eq:decomp} vanishes.

The last summand is also zero, as we will now prove.
We find it easier to use Lie algebra \textit{homology} rather than cohomology.
We observe that 
\begin{equation}\label{}
  H^2_{Lie} (\lie{w}_2 ; \sM) = \left(H_2^{Lie} (\lie{w}_2^{poly} ; \sM^*)\right)^* 
\end{equation}
for any module $\sM$.
Thus, to show that the last term in \eqref{eq:decomp} vanishes it suffices to prove:
\begin{lemma}
  For $j>1$ one has
  \[
    H^{Lie}_2\left(\lie{w}^{poly}_2;\op{S}^j(\sP) \right)=0 
  \]
\end{lemma}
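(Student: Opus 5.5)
The plan is to prove the vanishing by a weight argument, using the Euler vector field $E = z^\1\del_\1 + z^\2\del_\2 \in \lie{w}^{poly}_2$ together with the $\lie{gl}_2$ subalgebra of linear vector fields. The key fact to invoke is the standard one: a Lie algebra element acts trivially on Lie algebra homology, since $L_X = [d, \iota_X]$ on Chevalley--Eilenberg chains. It follows that $H^{Lie}_\bu(\lie{w}^{poly}_2;\sM)$ is computed by the subcomplex of $E$-weight zero chains. More strongly, by the Hochschild--Serre argument relative to $\lie{gl}_2$ (which is reductive and acts semisimply on the polynomial chains), it is computed by the relative complex $C_\bu(\lie{w}^{poly}_2,\lie{gl}_2;\sM)$. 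Concretely, this is the complex of $\lie{gl}_2$-coinvariants of $\wedge^\bu(\lie{w}^{poly}_2/\lie{gl}_2)\otimes\sM$, up to the usual correction by $H_\bu(\lie{gl}_2)$ in a Künneth-type decomposition. It therefore suffices to show that the weight-zero part of the relevant chain spaces vanishes in homological degrees $\le 3$; we need degree $3$ as well so that nothing survives in $H_2$.

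The next step is to record the $E$-weights. A vector field $(z)^\alpha\del_\ii$ has weight $|\alpha|-1\ge -1$. Only $\del_\1,\del_\2$ have weight $-1$; the span of $z^\jj\del_\ii$, which is $\lie{gl}_2$, has weight $0$; everything else has positive weight. On $\sP=\op{S}(V)\otimes\wedge^2V\otimes V$, Lemma~\ref{prop:dbarwitt} shows that the class $\del_\1^{(k)}\del_\2^{(\ell)}P\,\del_\ii$ has $E$-weight $-(k+\ell)-3$. This is consistent with $\sP^*\cong\Omega^2\otimes\Omega^1$, whose generators $\d z^\1\d z^\2\otimes \d z^\ii$ have weight $3$, with $z$-monomials raising it. Hence every element of $\op{S}^j(\sP)$ has weight $\le -3j\le -6$ for $j>1$. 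A chain in $\wedge^n\lie{w}^{poly}_2\otimes\op{S}^j(\sP)$ of total weight zero therefore needs the vector-field part to have weight $\ge 6$. With at most two factors of weight $-1$ (by antisymmetry, $\del_\1\wedge\del_\2$ is the only option), this is certainly possible, so pure weight counting alone does not finish the argument.

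To finish, I would refine the count using the $\lie{gl}_2$-coinvariants together with the $\lie{sl}_2$-structure. The substantive step is then to compute the relative homology $H_\bu(\lie{w}^{poly}_2,\lie{gl}_2;\op{S}^j(\sP))$ in low degrees. My first attempt would be to identify $\op{S}^j(\sP)$ as a coinduced (or induced) module from the subalgebra $\lie{w}_2^{\ge 0}$ of vector fields vanishing at $0$. Dually, $\op{S}^j(\sP)^*$ is a module over $\C\llbracket z\rrbracket$ that is a sum of tensor modules, namely symmetric powers of $\Omega^1$ tensored with $(\Omega^2)^{j}$. For such modules a Shapiro-type lemma reduces the computation to $H_\bu(\lie{w}_2^{\ge0};W)$ for a finite-dimensional $\lie{gl}_2$-module $W$, and this in turn reduces to $\lie{gl}_2$-invariant computations. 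A $\lie{gl}_2$-module $W$ built from $j\ge2$ copies of $\wedge^2V\otimes V$ carries central character (determinant weight) at least $3j\ge 6$. This cannot be cancelled in degree $\le 3$: the chains of $\lie{w}_2^{\ge 0}/\lie{gl}_2$ contribute determinant weight at most of degree one per factor with the appropriate sign, so by the time the degree reaches $2$ or $3$ the total is too small. Since the $\lie{gl}_2$-invariants vanish, the homology vanishes in those degrees.

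I expect the main obstacle to be this last bookkeeping. One must set up the Shapiro reduction correctly for the coinduced/tensor structure in the polynomial (rather than formal) setting. One must also check carefully that the determinant-weight deficit cannot be compensated by the two weight $-1$ generators $\del_\1,\del_\2$ entering the chains. That is, one has to verify that for $j>1$ and chain degree $\le 3$ there is no nonzero $\lie{gl}_2$-invariant in the relevant relative complex.
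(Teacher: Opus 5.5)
There is a genuine gap at the step you flag as substantive. Here $\op{S}^j(\sP)$ is the symmetric power over $\C$; it comes from $\op{Sym}(\sP^*)$ on the $E_1$-page. Its dual is therefore the completed $j$-fold symmetric tensor power over $\C$ of $\Omega^2\otimes\Omega^1$: symmetrized sections in $j$ independent sets of variables $z_{(1)},\dots,z_{(j)}$, with $\lie{w}_2$ acting diagonally. This is not a sum of tensor modules over the power series ring. The module $(\Omega^2)^{\otimes j}\otimes\op{S}^j\Omega^1$ you name is only its restriction to the diagonal. Already for $j=2$ they differ: the weight $-7$ part of $\op{S}^2\sP$ is $\sP_{-3}\otimes\sP_{-4}$, of dimension $8$, whereas the weight $7$ part of $(\Omega^2)^{\otimes 2}\otimes\op{S}^2\Omega^1$ has dimension $6$. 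So Shapiro's lemma does not bring you to a finite-dimensional $\lie{gl}_2$-module. What is true is that $\sP$ is induced from the two-dimensional $L_0$-module $W$ spanned by $P\del_\1,P\del_\2$, on which $L_1$ acts trivially. The tensor identity then gives $H_\bullet(\lie{w}_2^{poly};\sP^{\otimes j})\cong H_\bullet(L_0;W\otimes\sP^{\otimes(j-1)})$, and $\op{S}^j\sP$ is a direct summand of $\sP^{\otimes j}$. The coefficients here are infinite-dimensional and $L_1$ acts on them nontrivially, so this is not the $j=1$ situation.

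Second, even for a finite-dimensional $W$ of Euler weight $\le -6$, your determinant-weight bookkeeping fails at the chain level. A single $z^\alpha\del_\ii\in L_1$ has weight $|\alpha|-1$, which is unbounded. Hence weight-zero $\lie{gl}_2$-coinvariant chains do exist in degree $2$, for instance in $(L_1)_1\otimes(L_1)_5\otimes W$ with $W=\op{S}^2(\wedge^2V\otimes V)$. No count of chains can give the vanishing; it has to come from homology. The missing input is the weight range of $H_{\le 2}(L_1^{poly})$: generators in weight $1$, relations only in weights $2$ and $3$ (Fuks, App.~D).

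That is exactly what the paper uses. It restricts to biweight $(0,0)$ and removes the factors $\del_\ii$ and the $\lie{gl}_2$-factors by writing their partners as commutators in $L_1^{poly}$. It then inducts on the maximal weight of the $\op{S}^j\sP$-factor: the coefficient of that factor is a $2$-cycle of $L_1^{poly}$ of weight $\ge 3j\ge 6$, hence a boundary.

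Your Shapiro idea can be repaired in the same spirit. From $H_\bullet(L_0;W\otimes\sP^{\otimes(j-1)})$, pass to $L_1$ by Hochschild--Serre. Then filter by the weight of the coefficient module, which $L_1$ strictly raises. The resulting $E^1=H_\bullet(L_1)\otimes W\otimes\sP^{\otimes(j-1)}$ has no weight-zero part in degrees $\le 2$, since $H_{\le 2}(L_1)$ sits in weights $\le 3<3j$.
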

\begin{proof}
  The space of $2$-chains is 
  \begin{equation}\label{}
    \op{S}^j (\sP) \otimes (\wedge^2 \lie{w}^{poly}_2) .
  \end{equation}
  We will use the bi-weight decomposition with respect to the Euler vector
fields $(z^1 \del_{1}, z^2 \del_\2)$ which act on each individual tensor factor.
The subcomplex of biweight $(0,0)$:
\begin{equation}\label{}
  \left(\op{S}^j (\sP) \otimes (\wedge^2 \lie{w}^{poly}_2) \right)^{(0,0)}
\end{equation}
is quasi-isomorphic to the full chain complex.
We can therefore assume a $2$-cycle is a sum pure tensors of the form 
\begin{equation}\label{eq:cycle1}
  \PP^{(-m^1,-m^2)} \otimes 
\mathbb{v}^{(l^{\1},l^{\2})}\wedge\mathbb{w}^{(m^{\1}-l^{\1},m^{\2}
-l^{\2})} 
\end{equation}
where $\PP^{(-m^1,-m^2)} \in \op{S}^j(\sP)$ and the superscripts indicate bi-weight.
The first step in the proof is to reduce the possible values of $l^1,l^2$.

Note that $m^1,m^2 \geq j$, $m^1+m^2 \geq 3j$, and $l^1 + l^2 \geq -1$.
We will parametrize as 
\[
m^{\1}=j+s+a,\quad  m^{\2}=2j-s+b,\quad s=0,...,j,\quad j\geq 2,\quad a,b\geq 0.
\]

Suppose that 
\[
l^{\1}=-1,\quad l^{\2}=0,
\]
then $\mathbb{v}^{(-1,0)} = \del_\1$ and
\[
\mathbb{w}^{(j+s+1+a,2j-s+b)} =(z^{\1})^{j+s+1+a}(z^{\2})^{2j-s+b+1}\del_\2\quad \text{or}\quad (z^{\1})
^{j+s+2+a}(z^{\2})^{2j-s+b}\del_\1.
\]
In either case, we have
\[
  (z^{\1})^{j+s+1+a}(z^{\2})^{2j-s+b+1}\del_\2= - \frac{1}{j+s+a} \left[(z^{\1})^{j+s+a}(z^{\2})^{2j-s+b+1}\del_\2,
  (z^{\1})^{\2} \del_\1 \right]
\]
or
\[
  (z^{\1})^{j+s+2+a}(z^{\2})^{2j-s+b}\del_\1=-\frac{1}{j+s+a-1} \left[(z^{\1})^{j+s+1+a}(z^{\2})^{2j-s+b}\del_\1,
  (z^{\1})^{2} \del_\1 \right]
\]
In each of these expressions we are making use of the assumptions on our bounds for indices. 
In the second expression, the right hand side is well-defined since $j \geq
2$ and $s=0,\ldots,j$, $a \geq 0$.
Thus, when $l^1=-1, l^2=0$ our term \eqref{eq:cycle1} is of the form 
\begin{equation}\label{}
  \PP^{(-m^1,-m^2)} \otimes (\del_\1 \wedge [X,Y])
\end{equation}
where $X,Y$ both lie in the subalgebra: 
\begin{equation}\label{}
  L^{poly}_1 \subset \lie{w}_2^{poly}
\end{equation}
of polynomial vector fields with vanishing $1$-jet.
By applying the Chevalley--Eilenberg differential to the $3$-cycle 
\begin{equation}\label{}
\PP^{(-m^\1,-m^\2)} \otimes \del_\1 \wedge X \wedge Y ,
\end{equation}
we observe that all $2$-cycles which involve a weight $(-1,0)$ vector field can be replaced, up to exact terms, by
vector fields of weight $(l^1,l^2)$ where $l^1 \geq 0$.
Similarly, we can assume without loss of generality that $l^1 \geq 0$.

Next, we will argue that terms with $l^1+l^2 = 0$ do not contribute to cohomology.
Suppose that (note that $l^1+l^2 = 0, l^1 \geq 0$ implie that $l^{\1}=1,l^{\2}=-1$)
\[
\mathbb{v}^{(l^{\1}=1,l^{\2}=-1)}=z^{\1}\del_\2\in \mathfrak{gl}_2.
\]
then 
\[
\mathbb{w}^{(m^{\1}-l^{\1},m^{\2}-l^{\2})}=(z^{\1})^{j+s+a-1}(z^{\2})^{2j-s+b+2}\del_\2
\quad \text{or}\quad (z^{\1})^{j+s+a}(z^{\2})^{2j-s+b+1}\del_\1.
\]

If $j+s+a>2$, observe
\[
(z^{\1})^{j+s+a-1}(z^{\2})^{2j-s+b+2}\del_\2=-\frac{1}{j+s+a-2}[(z^{\1})^{j+s+a-2}(z^{\2})^{2j-s+b+2}\del_\2,(z^{\1})^{2}\del_\1]
\]
if $j+s+a=2$, then $a=0, j=2, s=0, m^{\2}=2j-s+b=4+b$.
In this case:
\[
(z^{\1})(z^{\2})^{6+b}\del_\2=-\frac{1}{3+b}[(z^{\1})(z^{\2})^{5+b}\del_\2,(z^{\2})^{2}\del_\2],
\]
On the other hand, for $j+s+a>3$ we have
\[
 (z^{\1})^{j+s+a}(z^{\2})^{2j-s+b+1}\del_\1=-\frac{1}{j+s+a-3}[ (z^{\1})^{j+s+a-1}(z^{\2})^{2j-s+b+1}\del_\1,(z^{\1})^{2}\del_\1],
\]
 Finally, if $j+s+a\leq 3$, then $j=2, s\leq 1$, we have $m^{\2}=2j-s+b=4-s+b$
$$
 (z^{\1})^3(z^{\2})^{5-s+b}\del_\1=-\frac{1}{4-s+b}[ (z^{\1})^{3}(z^{\2})^{4-s+b}\del_\1,(z^{\2})^{2}\del_\2].
$$

We conclude that without loss of generality that a cycle in $Z_2^{Lie}(\lie{w}_2^{poly} ; \op{S}^j \sP )
$ is of the form
\begin{equation}\label{eq:cycletriv}
\sum \mathbb{P}^{(-m^{\1},-m^{\2})}_i\wedge \mathbb{v}^{(l^{\1},l^{\2})}_i\wedge\mathbb{w}^{(m^{\1}-l^{\1},m^{\2}-l^{\2})}
_i,\quad \mathbb{P}^{(-m^{\1},-m^{\2})}_i\in(\op{S}^j \sP),\quad \mathbb{v}^{(l^{\1},l^{\2})}_i\wedge\mathbb{w}^{(m^{\1}-l^{\1},m^{\2}-l^{\2})}_i\in \wedge^2L_1,
\end{equation}
where
\[
m^{\1}=j+s+a,\quad  m^{\2}=2j-s+b,\quad s=0,...,j,\quad j\geq 2,\quad a,b\geq 0.
\]
Our strategy is to now show that every such cycle is exact. Each term in the cycle above is labeled by integers $(m_1,m_2,\ell_1,\ell_2)$. Let $m^*$ be the maximum of $m^{\1}+m^{\2}$. We will argue that one can find homologically equivalent replacements for the expression with strictly smaller $m^*$.
The key to the argument is that our vector fields are assumed to lie in $L_1^{poly}$, So, if we can show that one, or both,
are commutators of other vector fields in $L_1^{poly}$, then the form of the Chevalley--Eilenberg differential will create a homotopy equivalence between the original cycle and one where the integers $(m^1,m^2)$ which appear in the sum are
strictly smaller.
We then repeat this until we find that the cycle is exact, which happens in some finite stage since no term has $m^1 + m^2 < 3j$.

We begin with the maximal value of $a+b$ (and hence of \(m^{\1}+m^{\2}\)) appearing in the cycle \eqref{eq:cycletriv}. Since \(m^{\1}+m^{\2}\) is assumed to be maximal, the 2-chain
\[
\sum \mathbb{v}^{(l^{\1},l^{\2})}_i \wedge \mathbb{w}^{(m^{\1}-l^{\1},m^{\2}-l^{\2})}_i \in \wedge^2 L_1^{poly}
\]
is a cycle. By \cite[Prop.~A.9]{FuksBook}, a basis of $H_1(L_1^{poly})$ (respectively, $H_2(L_1^{poly}$)) may be viewed as a minimal system of generators of $L_1^{poly}$ (respectively, a minimal system of defining relations). According to \cite[Appendix.~D]{FuksBook}, $L_1^{poly}$ has 6 weight-1 generators, 7 weight-2 relations, and 18 weight-3 relations. Hence $H_2(L_1^{poly})$ has no component of weight $\ge 4$. Since \(m^{\1}+m^{\2}\ge 3j\ge 6>4\), it follows that the above 2-chain is exact. Therefore, we can replace
\[
\sum \mathbb{P}^{(m^{\1},m^{\2})}_i \wedge \mathbb{v}^{(l^{\1},l^{\2})}_i \wedge \mathbb{w}^{(m^{\1}-l^{\1},m^{\2}-l^{\2})}_i
\]
by terms involving \(\mathbb{P}^{(m^{'\1},m^{'\2})}_i\) with \(m^{'\1}+m^{'\2}<m^{\1}+m^{\2}\). Repeating this procedure completes the proof.
\end{proof}

Our final calculation is of the remaining summand on the right hand side of \eqref{eq:decomp}.
Let $L_k \subset \lie{w}_d$ denote the ideal of vector fields on the formal $d$-disk whose $k$-jet vanishes.
Observe that there is a natural isomorphism $L_0 \slash L_1 = \lie{gl}_d$.
  Let $\sM$ be any tensor module of $\lie{w}_2$.
  That is, a module which is the coinduction of a tensor $\lie{gl}_d$-module $M$ along
  \begin{equation}\label{}
    \lie{gl}_d \leftarrow L_0 \subset \lie{w}_d .
  \end{equation}
  Using the Hocschild--Serre spectral sequence it is shown \cite{Losik} how to express the Lie algebra cohomology of
  $\lie{w}_d$ with coefficients in such tensor modules in terms of the $GL_d$-representation theory of $H^\bu_{Lie}(L_1)
  $.
  Note that this cohomology is generally very difficult to compute as it does not contain a reductive subalgebra (like
  for $\lie{w}_d$).
  The result is:
    \begin{equation}\label{}
    H^\bu(\lie{w}_d \, ; \, \sM) \cong H^\bu(\lie{gl}_d) \otimes \left[H^\bu(L_1) \otimes M\right]^{GL_d}
    \end{equation}
    We use this result in our last lemma.
\begin{lemma}
  One has 
  \begin{equation}
    \dim H^2_{Lie} (\lie{w}_2 \, ; \, \Omega^1 \otimes \Omega^2) = 2 .
  \end{equation}
\end{lemma}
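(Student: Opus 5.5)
We will apply Losik's formula, just quoted, with $d=2$ and $\sM = \Omega^1\otimes_{\sO}\Omega^2$. This is the tensor module coinduced from the $\lie{gl}_2$-module $M = V^*\otimes \wedge^2 V^*$, so
\[
H^2(\lie{w}_2;\Omega^1\otimes\Omega^2) \cong \bigoplus_{p+q=2} H^p(\lie{gl}_2)\otimes \left[H^q(L_1)\otimes V^*\otimes\wedge^2V^*\right]^{GL_2}.
\]
Here $H^\bu(\lie{gl}_2)=\wedge(e_1,e_3)$ has $H^0=\C$, $H^1=\C e_1$ (the trace) and $H^2=0$. So only $(p,q)=(0,2)$ and $(1,1)$ contribute. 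For both we need the $GL_2$-decomposition of $H^1(L_1)$ and $H^2(L_1)$, restricted to the components pairing nontrivially with $V^*\otimes\wedge^2V^*$.

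The first key step is a weight argument. The central element $\mathrm{Id}=z^\1\del_\1+z^\2\del_\2$ acts on $V^*\otimes\wedge^2V^*$ with weight $-3$ (in the convention where $z^\ii$ has weight $+1$). Hence an invariant needs a component of $H^q(L_1)$ of total $z$-degree exactly $3$ relative to the $\lie{gl}_2$ grading, i.e.\ built from generators of $L_1$ of weight $1$ and relations of weight $2$. Concretely, $H^1(L_1)$ is dual to $H_1(L_1)$, the generators, which sit in weight $1$. As cited from \cite[Appendix D]{FuksBook}, these are the $6$-dimensional $\op{S}^2V^*\otimes V$. Similarly, $H^2(L_1)$ in the lowest weight is dual to the $7$-dimensional space of weight-$2$ relations, and the weight-$3$ relations are irrelevant here.

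I will make all weights consistent with the grading in which $H^1(L_1)$ has weight $1$ and $H^2(L_1)$ has weights $2,3$. Then the $(1,1)$ term needs $\op{Hom}_{GL_2}(\op{S}^2V\otimes V^*, V^*\otimes\wedge^2V^*)$, up to the dualization conventions. Decomposing gives $\op{S}^2V^*\otimes V = \op{S}^3V^*\otimes\det V\ \oplus\ V^*$ (twisted appropriately), so this contributes one dimension, spanned by $e_1$ times the trace (divergence) part. The $(0,2)$ term needs the weight-$3$ part of $H^2(L_1)$, namely the $18$ weight-$3$ relations. I would decompose these $18$ dimensions into irreducibles and expect exactly one copy of the relevant $2$-dimensional irreducible $V\otimes\det^{-2}$ (or its dual, depending on convention). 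Combining the two terms gives dimension $2$.

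The main obstacle is pinning down the $GL_2$-module structure of $H^2(L_1)$ in the relevant weight, and in particular checking that the multiplicity of the needed $2$-dimensional irreducible is exactly one, so the count is neither $0$ nor $2$. I would first try to read it off Goncharova/Fuks-type tables in \cite{FuksBook}. Failing that, I would compute it via the Euler characteristic of the weight-$3$ piece of $\wedge^\bu L_1^*$ as $GL_2$-characters: the weight-$3$ part of $\wedge^2$ of the generators, minus $\wedge^1$ of the weight-$3$ part of $L_1$, etc. This uses that $H^1$ vanishes in weight $3$ and $H^3$ vanishes in weight $3$. Finally, as a consistency check, the two resulting classes should be compatible with the $\geq 2$ lower bound from the injectivity section: that is, both $\op{ch}_1^3$ and $\op{ch}_1\op{ch}_2$ should map to nonzero classes in this group.
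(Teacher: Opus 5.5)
Your framework matches the paper's: Losik's formula, $H^2(\lie{gl}_2)=0$, the Euler-weight selection rule, and counting $\lie{sl}_2$-multiplicities in $H^\bullet(L_1)$. However, both of your term-by-term counts are wrong. The total comes out to $2$ only because the two errors cancel, so the argument as written does not prove the lemma.

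\textbf{The $(p,q)=(1,1)$ term is zero, not one-dimensional.} By your own selection rule, only the part of $H^q(L_1)$ on which $E=z^1\partial_1+z^2\partial_2$ acts by $-3$ can pair with $M=V^*\otimes\wedge^2V^*$. But $H^1(L_1)$ is dual to the weight-one generators $\op{S}^2V^*\otimes V$, so it sits entirely in weight $-1$. In $GL_2$-terms, $H^1(L_1)\cong\op{S}^3V\otimes\det V^*\oplus V$ while $M^*\cong V\otimes\det V$, so $\op{Hom}_{GL_2}(M^*,H^1(L_1))=0$. The divergence copy $V$ has the right $\lie{sl}_2$-type but the wrong determinant character, and ``twisted appropriately'' hides exactly this. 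For the same reason the seven weight-two relations, which you first single out, play no role; only the eighteen weight-three relations matter.

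\textbf{The $(0,2)$ term is two-dimensional, not one-dimensional.} As $\lie{sl}_2$-modules the weight-$k$ piece $L_1^{(k)}=\op{S}^{k+1}V^*\otimes V$ is $\op{S}^{k+2}\oplus\op{S}^k$, where $\op{S}^n$ is the $(n+1)$-dimensional irreducible. In total weight three the chain groups $L_1^{(3)}$, $L_1^{(1)}\otimes L_1^{(2)}$, $\wedge^3L_1^{(1)}$ contain $\op{S}^1$ with multiplicities:
\begin{itemize}
\item[] $0$ in $L_1^{(3)}$;
\item[] $3$ in $L_1^{(1)}\otimes L_1^{(2)}$, from $\op{S}^1\otimes\op{S}^2$, $\op{S}^3\otimes\op{S}^2$ and $\op{S}^3\otimes\op{S}^4$;
\item[] $1$ in $\wedge^3L_1^{(1)}$, from $\op{S}^1\otimes\wedge^2\op{S}^3\supset\op{S}^1\otimes\op{S}^0$.
\end{itemize}
The degree-three copy is the paper's $\eta$: $z^1E$ wedged with the invariant in $\wedge^2$ of the divergence-free quadratic fields. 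Its boundary is nonzero, since already the term $[z^1E,(z^1)^2\partial_2]\wedge(z^2)^2\partial_1=(z^1)^3\partial_2\wedge(z^2)^2\partial_1$ cannot cancel. Hence $\op{S}^1$ occurs in weight-three $H_2(L_1)$ with multiplicity exactly $2$. In fact the full $18$-dimensional space is $\op{S}^7\oplus\op{S}^5\oplus 2\,\op{S}^1$. This is what the paper establishes with the explicit cycles $\alpha,\beta,\gamma$ and $\partial\eta=\tfrac14(-\beta+\gamma)$.

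Your own fallback, the character-level Euler characteristic with $H_1=H_3=0$ in weight three, would itself return multiplicity $2$. Combined with your claimed $(1,1)$ contribution, that gives $3$. The correct bookkeeping is $0+2$: both classes come from $[H^2(L_1)\otimes M]^{GL_2}$, and neither involves $e_1$.
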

\begin{proof}
  It suffices to show:
\begin{align*}
  \dim \left(H^q_{Lie}(L_1) \otimes V^* \otimes \wedge^2 V^*\right)^{GL_2} & = 0, \quad q=0,1 \\
  \dim \left(H^2_{Lie} (L_1) \otimes V^* \otimes \wedge^2 V^* \right)^{GL_2} & = 2 .
  \end{align*}
  
  We consider the $\lie{sl}_2$ representation type and scaling dimension. 
  Keeping the notation of the previous lemma, we have the decomposition 
  \begin{equation}\label{}
    V^* \otimes \wedge^2 V^* = S \otimes U^3   
  \end{equation} 
    Thus, scaling invariants will arise completely from $U^{-3}$ summands in $H^q_{Lie}(L_1)$.
    The decomposition of the continuous linear dual of $L_1$ 
  \begin{align}\label{eq:L1sl2}
    L_1^* & = \op{S}^{\geq 2} (S \otimes U^{-1}) \otimes \op{S} \otimes U \notag \\ & = \op{S} \otimes U^{-1} \oplus \op{S}^2 \otimes
    U^{-2} \oplus
    \bigoplus_{m \geq 3} \op{S}^m \otimes (U^{-m+2} \oplus U^{-m})
  \end{align}
  
  When $q=0$ there are obviously no invariants.
  When $q=1$ we need to consider the $U^3$ part of $L_1^*$ which is $\op{S}^3 \oplus \op{S}^5$.
  Therefore the $U^{-3}$ component of $H^1(L_1) \otimes \op{S}$ will not contain any $\lie{sl}_2$-invariant summands.

  Finally, we need to consider $H^2_{Lie}(L_1)$.
  There are the following components to $U^{-3}$ in~$\wedge^2 L_1^*$:
  \begin{itemize}
    \item[(1)] $\op{S} \otimes \op{S}^2$.
    \item[(2)] $\op{S} \otimes \op{S}^4$.
    \item[(3)] $\op{S}^2 \otimes \op{S}^3$.
    \item[(4)] $\op{S}^3 \otimes \op{S}^4$.
  \end{itemize}
  The multiplicity of $\op{S}$ in the space of $2$-cochains is therefore three: one from (1), one from (3), and one
  from (4).
  We can identify these cochains explicitly. 
  We work with Lie algebra homology, and chains, for convenience.
  
  The highest-weight vector in the $\op{S}$ summand in (1) is dual to the $2$-chain 
  \begin{equation}\label{}
    \alpha \define z^{[}\sfE\sfu \wedge z^\1 z^{]} \sfE\sfu = z^\1 \sfE\sfu \wedge z^\1 z^\2  \sfE\sfu - z^\2 \sfE\sfu
    \wedge (z^\1)^2
    \sfE\sfu . 
  \end{equation} 
  The highest-weight vector in the $\op{S}$ summand in (3) is dual to the $2$-chain: 
  \begin{multline}
    \beta \define
    3 (z^\1)^2 \del_\2 \wedge (z^\2)^2 \sfE \sfu + 2 ((z^\1)^2 \del_\1 - 2 z^\1 z^\2 \del_\2) \wedge z^\1 z^\2 \sfE \sfu - (2 z^\1
    z^\2 \del_\1 - (z^\2)^2 \del_\2) \wedge (z^\1)^2 \sfE \sfu .
  \end{multline}
  Finally, the highest-weight vector in the $\op{S}$ summand in (4) is dual to the $2$-chain:
  \begin{align}\label{}
    \gamma & 
    \define 3 (z^\1)^2 \del_\2 \otimes (3 z^\1 (z^\2)^2 \del_\1 - (z^\2)^3 \del_\2) \\ & + 3 ((z^\1)^2 \del_\1 - 2 z^\1
    z^\2 \del_\2) \wedge ((z^\1)^2 z^\2 \del_\1 - z^\1 (z^\2)^2 \del_\2)  \\
           &- (2 z^\1 z^\2 \del_\1 - (z^\2)^2 \del_\2) \wedge ((z^\1)^3 \del_\1 - 3 (z^\1)^2 z^\2 \del_\2) \\ 
           & - 4 (z^\2)^2 \del_\1 \otimes (z^\1)^3 \del_\1 .
\end{align}

Note that at the appropriate scaling dimension, which is $3$ at the level of chains, there are no $\op{S}$ summands in $\wedge^1 L^{poly}_1 = L^{poly}_1$.
Therefore, these cochains are automatically closed.
It is also instructive to check this by direct calculation.
Thus, there are at most three copies of $\sfS$ which survive in cohomology.

On the other hand, there is a unique copy of $\op{S}$ in $\wedge^3 L_1^{poly}$ of scaling dimension $3$.
It comes from $\op{S} \otimes U \otimes \wedge^2 (\op{S}^3 \otimes U)$.
Its highest-weight vector is 
  \begin{align}\label{}
    \eta & \define  
    z^\1 \sfE \sfu \wedge (z^\1)^2 \del_\2 \wedge (z^\2)^2 \wedge \del_\1 + \frac13 z^\1 \sfE \sfu \wedge ((z^\1)^2
    - 2 z^\1 z^\2 \del_\2) \wedge (2 z^\1 z^\2 \del_\1 - (z^\2)^2 \del_\2) . 
  \end{align}
  It is easy to see that $\d_{CE} \eta \ne 0$, so by Schur's lemma the result follows.
  (In fact, direct calculation shows $\d_{CE} \eta = \frac14 (-\beta + \gamma)$.
  So, nice nontrivial homology representatives are $[\alpha], [\beta + \gamma] \in H_2(L_1)$).
\end{proof}

We have finally completed the proof of theorem \ref{thm:2dgf}.


\subsection{Homology of differential operators}

We will need the next result in our proof of the local, universal form of the Grothendieck--Riemann--Roch theorem
contained in the next section.

We consider the Jouanolou dg algebra of (polynomial) differential operators on the punctured affine space $\cD^{poly}_2$ as
defined in \S \ref{s:weyl}.
We refer to this section for the notations used here.

The main tool we use to prove the following theorem is a similar spectral sequence as we used to compute the second Lie algebra (co)
homology of $\lie{witt}_2$.

\begin{theorem}\label{thm:diffops}
  The second hyperhomology $\mathbb{H}_2^{Lie}(\cD_2^{poly})$ is one-dimensional and generated by the following class
  \[
    [P\wedge z^{\1}\wedge z^{\2}].
  \]
\end{theorem}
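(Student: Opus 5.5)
We will follow the same template as for $\lie{witt}_2$: filter the Chevalley--Eilenberg chains of $\cD^{poly}_2$ (viewed as a dg Lie algebra under the commutator) by symmetric degree. The resulting spectral sequence has $E^1$-page the Lie algebra homology of the graded Lie algebra
\[
H^\bu_{\dbar}(\cD_2^{poly}) \cong \fD_2^{poly} \ltimes \til\sP[-1],
\]
where $\fD_2^{poly}$ is the Weyl algebra of polynomial differential operators and $\til \sP = H^1_{\dbar}(\cD^{poly}_2)$ is an abelian ideal. Here $\til\sP\cdot\til\sP=0$, since $\cJ_2$ has no cohomology in degree $2$. The total degree $-2$ part of the $E^1$-page then splits as
\[
H_2(\fD_2^{poly}) \;\oplus\; H_1\bigl(\fD_2^{poly};\til\sP\bigr) \;\oplus\; H_0\bigl(\fD_2^{poly};\op{S}^2\til\sP\bigr),
\]
with the total degree of a chain in $\op{S}^j\til\sP\otimes\wedge^k\fD_2$ equal to $-(j+k)+j\cdot$(shift). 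More precisely, elements of $\til\sP[-1][1]$ sit in degree $0$, so a total degree $-2$ chain involves $\wedge^2$ of degree-zero operators tensored with an arbitrary symmetric power of $\til\sP$. The relevant pieces are therefore $H_2(\fD_2^{poly};\op{S}^j\til\sP)$ for $j\ge0$. I will identify these and show that exactly one class survives, namely the one with $j=1$.

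For $j=0$, I use the classical computation of Lie algebra homology of the Weyl algebra (Feigin--Tsygan / Gelfand--Fuks type): $H_\bu^{Lie}(\fD_2^{poly})$ is controlled by the Hochschild/cyclic homology of the Weyl algebra $A_2$, which is concentrated in degree $4$. Hence $H_2(\fD_2^{poly})=0$ (the low-degree part is killed by the $\lie{sp}_4$-weight argument). For $j\ge2$, I repeat the weight reduction of the preceding lemma for $\op{S}^j\sP$. Using the Euler operators $z^\ii\del_\ii$, I restrict to biweight $(0,0)$ and use commutators to write high-weight operators as brackets, which lowers the weight until no invariant chain remains. The fact that $\til\sP$ has negative weights bounded by $-(k+\ell+2)$ per factor makes this easier than for $\lie{witt}_2$, because $\fD_2$ contains $\lie{gl}_2$ and the multiplication operators $z^\ii$, which act freely.

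The $j=1$ piece is the main computation. Since $\til\sP^*$ is the $\fD_2$-module coinduced from $\wedge^2V^*\otimes\op{S}(V^*)$, Shapiro's lemma reduces $H_\bu(\fD_2^{poly};\til\sP)$ to homology of the subalgebra of operators preserving the point, with coefficients in a finite piece. Explicitly, the chain $P\otimes z^\1\wedge z^\2$ is a cycle: $[z^\1,z^\2]=0$ and $z^\ii\cdot P=0$ in $H^1$, since $z^\ii P$ is $\dbar$-exact. It pairs nontrivially with the dual cocycle $\mathfrak t$, so it is nonzero. I expect the main obstacle to be showing that every other cycle in $\til\sP\otimes\wedge^2\fD_2$ is homologous to a multiple of it. My first attempt will be the biweight-$(0,0)$ reduction together with a $GL_2$-invariant count, in the spirit of the last lemma, applied to $\til\sP\otimes\wedge^2\fD_2$ and $\til\sP\otimes\wedge^3\fD_2$. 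If that becomes unwieldy, I will instead use a Hochschild--Serre argument relative to $\lie{gl}_2\subset\fD_2$.

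Finally, I need the higher differentials $d_r$ for $r\ge2$ to neither kill nor create this class. For degree reasons, $d_2$ maps between the $j$ and $j\pm1$ pieces. The pairing of $[P\wedge z^\1\wedge z^\2]$ with $\mathfrak t$ is defined at the cochain level, which shows the class is not a boundary. Since the neighbouring pieces vanish, it is also a permanent cycle, and the result follows.
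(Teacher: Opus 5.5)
There is a genuine gap: the two computations that carry the theorem are not done. Your architecture agrees with the paper's. You use the same symmetric-degree spectral sequence, identify the total degree $-2$ part of $E^2$ as $\bigoplus_{j\ge0}H_2(\fD_2^{poly};\op{S}^j\til\sP)$, and exhibit the explicit cycle $P\wedge z^\1\wedge z^\2$.

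In two places you are more careful than the paper:
\begin{itemize}
\item You include the $j=0$ term $H_2(\fD_2^{poly})$, which the paper's list of claims omits. Your justification is loose, though: the Feigin--Tsygan/Loday--Quillen--Tsygan link to cyclic homology concerns $\mathfrak{gl}_\infty$ of the Weyl algebra, not the Weyl algebra itself. A Hochschild--Serre argument relative to $\mathfrak{sp}_4$ is the natural way to get this vanishing.
\item You detect nontriviality by pairing an honest total cycle with $\mathfrak t$, which also makes permanence automatic.
\end{itemize}

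The gap is that $H_2(\fD_2^{poly};\til\sP)\cong\C$ and $H_2(\fD_2^{poly};\op{S}^j\til\sP)=0$ for $j\ge2$ are not proved. For $j=1$ you list candidate strategies, and for $j\ge2$ you give a one-sentence sketch. The strategies also do not transfer as stated:
\begin{itemize}
\item The $GL_2$-multiplicity count worked for $L_1\subset\mathfrak{w}_2$ because its biweight spaces are finite-dimensional and Fuks's tables for $H_\bu(L_1)$ exist. In $\fD_2$ every $(z^\1)^a(z^\2)^b\del_\1^a\del_\2^b$ has biweight $(0,0)$. The weight spaces are therefore infinite-dimensional, and no finite count is possible without a further filtration.
\item Shapiro's lemma would need $\til\sP$ to be induced as a module over the Lie algebra $\fD_2^{poly}$ acting by commutators. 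The coinduced description of $\til\sP^*$ refers to a different module structure. Under the adjoint action, $[z^\1,\phi\,\del_\1^a\del_\2^b]=-a\,\phi\,\del_\1^{a-1}\del_\2^b$, so $z^\1$ acts locally nilpotently, not freely.
\item The same conflation appears in your cycle check. $P\wedge z^\1\wedge z^\2$ is closed because $[z^\ii,P]=0$ (commuting multiplication operators), not because $z^\ii P$ is $\dbar$-exact.
\end{itemize}

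What is missing is an explicit chain-level reduction, which is what the paper's proof consists of. For $j=1$ the steps are:
\begin{itemize}
\item Remove the derivatives on $P$.
\item Use $P\,\del_\1^{m^1-1}\del_\2^{m^2-1}=\frac{1}{m^1}[P,\,z^\1\del_\1^{m^1}\del_\2^{m^2-1}]$ to replace every coefficient by $P$ itself.
\item Write one of the two operators as a commutator with $z^\1$ (or $z^\2$), reducing to $P\wedge z^\1\wedge(\cdots)$.
\item The cycle condition then forces the last factor to be a combination of $(z^\2)^k\del_\2^{k-1}$.
\item Finally, $(z^\2)^k\del_\2^{k-1}=-\frac1k[z^\2,(z^\2)^k\del_\2^k]$ brings everything to a multiple of $P\wedge z^\1\wedge z^\2$.
\end{itemize}
For $j\ge2$, the same reductions bring a cycle to the form $\sum\mathbb{P}\wedge z^\1\wedge(\cdots)$. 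One then inducts downward on the maximal $m^1+m^2+p^1+p^2$, using $m^2\ge j\ge2$ to push the top term to strictly lower weight.

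Without some version of these identities, or a genuinely worked-out replacement, your argument gives only the lower bound $\dim\mathbb{H}_2^{Lie}(\cD_2^{poly})\ge1$, not one-dimensionality.
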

\begin{proof}
  We use the same type of spectral sequence as we used in the case of $\lie{witt}_2$ in the proof of lemma \ref{lem:sswitt}.
  Namely, the spectral sequence whose first page takes the internal, or $\dbar$, cohomology. We will prove the following:

    \[
    H^{Lie}_1\left(\fD_2^{poly};\op{S}^j(\til \sP) \right)=0,\quad j\geq 0, 
  \]
      \[
    H^{Lie}_2\left(\fD_2^{poly};\til \sP \right)\simeq \C, 
  \]
      \[
    H^{Lie}_2\left(\fD_2^{poly};\op{S}^j(\til \sP) \right)=0, \quad j\geq 2.
  \]
Where $\til \sP = H^1_{\dbar}(\cD_2^{poly})$ which carries a natural $H^0_{\dbar}(\cD_2^{poly})=\fD_2^{poly}$-action. A typical element $\mathbb{P}^{-m^{\1},-m^{\2}}_{-p^{\1},-p^{\2}}\in \op{S}^j(\til \sP)$ can be written as
\[
{P}^{-m^{\1}_1,-m^{\2}_1}_{-p^{\1}_1,-p^{\2}_1}\cdots {P}^{-m^{\1}_j,-m^{\2}_j}_{-p^{\1}_j,-p^{\2}_j},\quad  {P}^{-m^{\1}_i,-m^{\2}_i}_{-p^{\1}_i,-p^{\2}_i}=\left((\del_\1)^{p^{\1}_i}(\del_\2)^{p^{\2}_i}P\right)\cdot (\del_\1)^{m^{\1}_i-1}(\del_\2)^{m^{\2}_i-1},
\]
\[
\sum^j_{i=1}m^{\s}_i=m^{\s},\quad \sum^j_{i=1}p^{\s}_i=p^{\s},\quad \s=1,2.
\]
  
  For the first claim, we only need the following
  \[
  (z^{\1})^{m^{\1}+p^{\1}+l^{\1}}  (z^{\2})^{m^{\2}+p^{\2}+l^{\2}}(\del_\1)^{l^{\1}}(\del_\2)^{l^{\2}}=\frac{1}{l^{\1}+1}[(z^{\1})^{m^{\1}+p^{\1}+l^{\1}}  (z^{\2})^{m^{\2}+p^{\2}+l^{\2}}(\del_\1)^{l^{\1}+1}(\del_\2)^{l^{\2}},z^{\1}].
  \]
  Then 
  $$
 \sum\mathbb{P}^{-m^{\1},-m^{\2}}_{-p^{\1},-p^{\2}}\wedge   (z^{\1})^{m^{\1}+p^{\1}+l^{\1}}  (z^{\2})^{m^{\2}+p^{\2}+l^{\2}}(\del_\1)^{l^{\1}}(\del_\2)^{l^{\2}}
  $$
  can be reduced to zero as $\op{S}^j(\til \sP)$ only contains elements with weight $\geq 2$.
  For the second claim, we first reduce the following
  $$
  \sum{P}^{-m^{\1},-m^{\2}}_{-p^{\1},-p^{\2}}\wedge   (z^{\1})^{l^{\1}}  (z^{\2})^{l^{\2}}(\del_\1)^{r^{\1}}(\del_\2)^{r^{\2}}\wedge (z^{\1})^{k^{\1}}  (z^{\2})^{k^{\2}}(\del_\1)^{-m^{\1}-p^{\1}+l^{\1}-r^{\1}+k^{\1}}(\del_\2)^{-m^{\2}-p^{\2}+l^{\2}-r^{\2}+k^{\2}}
  $$
  to
    $$
  \sum{P}^{-m^{\1},-m^{\2}}_{0,0}\wedge   (z^{\1})^{l^{\1}}  (z^{\2})^{l^{\2}}(\del_\1)^{r^{\1}}(\del_\2)^{r^{\2}}\wedge (z^{\1})^{k^{\1}}  (z^{\2})^{k^{\2}}(\del_\1)^{-m^{\1}+l^{\1}-r^{\1}+k^{\1}}(\del_\2)^{-m^{\2}+l^{\2}-r^{\2}+k^{\2}}.
  $$
  Then use
  $$
  P(\del_\1)^{m^{\1}-1}(\del_\2)^{m^{\2}-1}=\frac{1}{m^{\1}}[  P,z^{\1}(\del_\1)^{m^{\1}}(\del_\2)^{m^{\2}-1}]
  $$
  to reduce to
    $$
P\wedge  \sum (z^{\1})^{l^{\1}}  (z^{\2})^{l^{\2}}(\del_\1)^{r^{\1}}(\del_\2)^{r^{\2}}\wedge (z^{\1})^{k^{\1}}  (z^{\2})^{k^{\2}}(\del_\1)^{-1+l^{\1}-r^{\1}+k^{\1}}(\del_\2)^{-1+l^{\2}-r^{\2}+k^{\2}}.
  $$
If that $l^{\1}\geq r^{\1}+2$, or $l^{\1}= r^{\1}+1, l^{\2}\geq r^{\2}$ we can use
$$
(z^{\1})^{l^{\1}}  (z^{\2})^{l^{\2}}(\del_\1)^{r^{\1}}(\del_\2)^{r^{\2}}=[z^{\1},(z^{\1})^{l^{\1}}  (z^{\2})^{l^{\2}}(\del_\1)^{r^{\1}+1}(\del_\2)^{r^{\2}}]
$$
to reduce to
$$
P\wedge z^{\1}\wedge \sum  (z^{\1})^{k^{\1}}  (z^{\2})^{k^{\2}}(\del_\1)^{k^{\1}}(\del_\2)^{k^{\2}-1}.
$$
Suppose that $l^{\1}= r^{\1}+1, l^{\2}<r^{\2}$, then we reduce to
$$
P\wedge z^{\2}\wedge \sum  (z^{\1})^{k^{\1}}  (z^{\2})^{k^{\2}}(\del_\1)^{k^{\1}-1}(\del_\2)^{k^{\2}}.
$$
which again reduce to $
P\wedge z^{\1}\wedge \sum  (z^{\1})^{k^{\1}}  (z^{\2})^{k^{\2}}(\del_\1)^{k^{\1}}(\del_\2)^{k^{\2}-1}.
$ The cycle condition implies that
$$
P\wedge z^{\1}\wedge \sum   (z^{\2})^{k^{\2}}(\del_\2)^{k^{\2}-1}.
$$
Now use
$$
 (z^{\2})^{k^{\2}}(\del_\2)^{k^{\2}-1}=-\frac{1}{k^{\2}}[(z^{\2}), (z^{\2})^{k^{\2}}(\del_\2)^{k^{\2}}]
$$
we finally arrive at 
$$
P\wedge z^{\1}\wedge z^{\2}.
$$
We turn to the third claim
      \[
    H^{Lie}_2\left(\mathcal{D}_2;\op{S}^j(\til \sP) \right)=0, \quad j\geq 2.
  \]
  Consider a cycle in the form
  the following
  $$
  \sum\mathbb{P}^{-m^{\1},-m^{\2}}_{-p^{\1},-p^{\2}}\wedge   (z^{\1})^{l^{\1}}  (z^{\2})^{l^{\2}}(\del_\1)^{r^{\1}}(\del_\2)^{r^{\2}}\wedge (z^{\1})^{k^{\1}}  (z^{\2})^{k^{\2}}(\del_\1)^{-m^{\1}-p^{\1}+l^{\1}-r^{\1}+k^{\1}}(\del_\2)^{-m^{\2}-p^{\2}+l^{\2}-r^{\2}+k^{\2}}
  $$
  We can use the same method to reduce to
  $$
  \sum\mathbb{P}^{-m^{\1},-m^{\2}}_{-p^{\1},-p^{\2}}\wedge   z^{\1}\wedge (z^{\1})^{k^{\1}}  (z^{\2})^{k^{\2}}(\del_\1)^{-m^{\1}-p^{\1}+1+k^{\1}}(\del_\2)^{-m^{\2}-p^{\2}+k^{\2}}.
  $$
  Consider the $\mathbb{P}^{-m^{\1},-m^{\2}}_{-p^{\1},-p^{\2}}$, for which $m^{\1}+m^{\2}+p^{\1}+p^{\2}$ is maximum, we get the following from the cycle condition
    $$
  \sum\mathbb{P}^{-m^{\1},-m^{\2}}_{-p^{\1},-p^{\2}}\wedge   z^{\1}\wedge (z^{\1})^{m^{\1}+p^{\1}-1}  (z^{\2})^{k^{\2}}(\del_\2)^{-m^{\2}-p^{\2}+k^{\2}}.
  $$
  Note that $m^{\2}\geq 2$, this term can be reduce to lower $\mathbb{P}^{-m^{\1},-m^{\2}}_{-p^{\1},-p^{\2}}$. Repeating this, we get the claim.
\end{proof}

The Jouanolou model $\cJ_2$ has the structure of a (dg) Tate vector space as shown in \cite{FHK}. 
It follows, see \cite{FHK}, that the second Lie algebra homology
of the Lie algebra $\mathrm{End}(\cJ_2)$ of continuous endomorphisms of $\cJ_2$ is one-dimensional. 
The dg Lie algebra~$\mathcal{D}^{poly}_{2}$ naturally maps to $\mathrm{End}(\cJ_2)$ and we have the following corollary.
\begin{corollary}
    The natural map 
    \[
      \mathbb{H}_2^{\mathrm{Lie}}(\mathcal{D}^{poly}_{2})\rightarrow \mathbb{H}_2^{\mathrm{Lie}}\big(\mathrm{End}
      (\cJ_2)\big)\simeq \C 
    \]
    is an isomorphism.
\end{corollary}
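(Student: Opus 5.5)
Both sides are one-dimensional: the source by Theorem \ref{thm:diffops}, the target by \cite{FHK}. So the natural map $\mathbb{H}_2^{\mathrm{Lie}}(\mathcal{D}^{poly}_2)\to \mathbb{H}_2^{\mathrm{Lie}}(\mathrm{End}(\cJ_2))$ is an isomorphism as soon as it is nonzero. The plan is to show that the generator $[P\wedge z^\1\wedge z^\2]$ maps to a nonzero class. I will do this by pairing its image against the distinguished generator $\mathfrak{t}\in \HH^2_{Lie}(\mathrm{End}\,\cJ_2)$ and checking that the pairing is nonzero.

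First I would recall the cocycle representing $\mathfrak{t}$ from \cite{FHK}. It is the higher Tate (Japanese/Kac--Peterson type) cocycle built from the dg Tate structure: choose a lattice splitting $\cJ_2 = \cJ_2^+\oplus \cJ_2^-$, with $\cJ_2^+$ the $\C\llbracket z^\1,z^\2\rrbracket$ part and $\cJ_2^-$ the polar part spanned by the $(x^\1)^k(x^\2)^\ell P$. Then take the $3$-linear expression
\[
\op{Tr}\bigl(\pi^{\pm}A_1\,\pi^{\pm}A_2\,\pi^{\pm}A_3\bigr),
\]
suitably antisymmetrized, with the projector commutators $[\pi^+,A_i]$ appearing so that the operators become trace class. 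This is consistent with the normalization stated in the introduction, namely that $\mathfrak{t}$ is dual to $z^\1\wedge z^\2\wedge P$.

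Second, I evaluate this cocycle on the chain $P\wedge z^\1\wedge z^\2$, where all three entries act on $\cJ_2$ by multiplication. The relevant action is the module structure on $H^\bu_{\dbar}(\cJ_2)$ described earlier. Multiplication by $z^\1$ and by $z^\2$ lowers the indices of $\del_\1^{(k)}\del_\2^{(\ell)}P$. Multiplication by $P$ sends $1\in\cJ_2^+$ to the class $P\in H^1$ and kills the rest of $\C\llbracket z\rrbracket$ modulo exact terms. The composite of $z^\1$, $z^\2$ and $P$ therefore has a finite-rank corner: it sends the constant $1$ to $P$, and $z^\1 z^\2\cdot P$ returns the corner through the relation $z^\1x^\1+z^\2x^\2=1$. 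Its supertrace should come out to $\pm1$, or possibly $\pm\tfrac12$. In particular it is nonzero, which is exactly the statement that $\mathfrak{t}$ is dual to $z^\1\wedge z^\2\wedge P$.

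The main obstacle is making this evaluation honest at the chain level. The chain $P\wedge z^\1\wedge z^\2$ is a cycle in $\mathcal{D}^{poly}_2$ only up to the spectral-sequence corrections of Theorem \ref{thm:diffops}. We have $[P,z^\1]=[P,z^\2]=[z^\1,z^\2]=0$, and $\dbar$ of each entry vanishes, since $\dbar P=0$ in the top degree. So it is in fact an honest cycle, and no correction terms are needed. The remaining work is bookkeeping: fix signs and the normalization of the Tate cocycle, and reduce the infinite-dimensional trace to the finite-rank corner using the explicit basis $\del_\1^{(k)}\del_\2^{(\ell)}P$. If the direct trace is awkward, I would instead use the residue description, $\op{Res}(P\,\d z^\1\d z^\2)=1$, which via \cite{FHK} computes the same pairing as $\op{Res}\bigl(P\,\partial z^\1\,\partial z^\2\bigr)=1\neq 0$.
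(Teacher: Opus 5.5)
Your proposal is correct and is essentially the paper's argument: the source is spanned by $[P\wedge z^1\wedge z^2]$ by Theorem~\ref{thm:diffops}, and this is an honest cycle, as you observe. The target is one-dimensional by \cite{FHK}, and the natural map sends the generator, up to sign, to the class $[z^1\wedge z^2\wedge P]$, which pairs to $1$ with $\mathfrak{t}$ by the normalization the paper takes from \cite{FHK}.

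Your attempted explicit evaluation of a Tate cocycle via a ``lattice splitting'' $\mathcal{J}_2=\mathcal{J}_2^+\oplus\mathcal{J}_2^-$ is not load-bearing. Such a splitting only exists on $\dbar$-cohomology, not on the complex itself. You can drop that paragraph and cite the normalization directly, or equivalently $\mathrm{Res}(P\,\partial z^1\,\partial z^2)=1$.
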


\section{Local Grothendieck-Riemann-Roch theorem}\label{s:2dGRR}

In this final section we formulate and prove a local, universal form of the Grothendieck-Riemann-Roch theorem for
formal families of complex surfaces.
It is a natural generalization, to complex dimension two, of the relationship of the Mumford determinant line bundle defined on the
moduli space of Riemann surfaces and the central charge. 
There is a natural extension to arbitrary dimensions which we will return to in future work.

The set up is as follows.
First, note that $\cJ_d$ is a representation for the dg lie algebra $\lie{witt}_d$
\[
 \sfL \colon \lie{witt}_d \xto{\simeq} \op{Der}( \cJ_d) \to \op{End} \cJ_d .
\]
Pulling back along $\sfL$ is a map in Lie algebra hyper Lie algebra cohomology 
\begin{equation}\label{}
  \sfL^* \colon \HH^2_{Lie}(\op{End} \cJ_d) \to \HH^2_{Lie} (\lie{witt}_d) . 
\end{equation}

More generally, let $\cV$ be any tensor $\lie{witt}_d$-module; that is, the Jouanolou model of some tensor bundle $V$ on the
punctured formal disk $\cV \simeq \RR \Gamma(\mathring{D}^d , V)$. 
Then, we obtain a homomorphism (essentially Lie derivative): 
\begin{equation}\label{}
  \sfL_{V} \colon \lie{witt}_d \to \op{End}(\cV)  
\end{equation}
and hence a map in cohomology 
\begin{equation}\label{}
  \sfL_V^* \colon \HH^2_{Lie}(\op{End} \cV) \to \HH^2_{Lie}(\lie{witt}_d) .
\end{equation}

By \cite[theorem 5.4.16]{FHK} the dg vector space $\op{End} \cV$ is a dg Tate vector space, and therefore its second Lie algebra
cohomology is one-dimensional, with generator we denote 
\begin{equation}\label{}
  \mathfrak{t} \in \HH^2_{Lie}(\op{End}\cV) \simeq \C.
\end{equation}
It is dual to the homology class
\begin{align}\label{eq:universalV}
  \left[z^\1 \wedge \cdots \wedge z^{\ddd} \wedge P \right]  &  \in 
  \HH_2^{Lie}( \op{End} \cV) \\ 
  \<\mathfrak{t}, [z^\1 \wedge \cdots \wedge z^{\ddd} \wedge P] \> & = 1 .
\end{align}
Here, we use that $z^\1, \ldots, z^{\ddd}, P \in \cJ_d$ acts purely through the $\cJ_d$-module structure.
With this setup, we have the following local, universal version of the Grothendieck-Riemann-Roch theorem which we state as a conjecture.

\begin{conjecture}[see also \cite{KapranovNotes}] \label{conj}
  The composition
\begin{equation}\label{}
  \HH_2^{Lie}(\lie{witt}_d) \xto{(\sfL_V)_*} \HH_2^{Lie}( \op{End} \cV) \xto{\tau} \C ,
\end{equation}
is the following universal class:
  \begin{equation}\label{}
    \op{Td} \op{ch}(V)|_{2d+2} \in \C[\op{ch}_1,\ldots,\op{ch}_d]_{2d+2}
  \end{equation}
  In other words, at the level of cohomology, one has 
  \begin{equation}\label{}
    \sfL^* [z^\1 \wedge \cdots \wedge z^{\ddd} \wedge P] = \left[\op{Td} \op{ch}|_{2d+2}\right] \in \HH^2_{Lie}(\lie{witt}_d) .
  \end{equation}
\end{conjecture}

In this section, we only consider the case $d=2$ and we will prove the following restatement of theorem \ref{thm:GRRglobal}.

\begin{theorem}\label{thm:grr2}
  The conjecture is true in the case $d=2$ and $\cV = \cJ_2$ (trivial tensor bundle).
\end{theorem}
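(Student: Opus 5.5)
The plan is to factor the action $\sfL$ through differential operators and reduce everything to the one‑dimensional homology computed in theorem \ref{thm:diffops}. Since vector fields act on $\cJ_2$ as first‑order differential operators, $\sfL$ factors as
\[
\lie{witt}_2 \longrightarrow \cD_2 \longrightarrow \op{End}(\cJ_2).
\]
By the corollary to theorem \ref{thm:diffops}, the second map induces an isomorphism on $\HH_2^{Lie}$, and both sides are spanned by $[P\wedge z^\1\wedge z^\2]$. So it suffices to produce an explicit cocycle $\Theta$ on $\cD_2$ (or $\cD_2^{poly}$) with two properties: it is normalized by $\langle \Theta, P\wedge z^\1\wedge z^\2\rangle = 1$, and its restriction along $\lie{witt}_2\to\cD_2$ equals the cocycle $\frac1{48}(\op{ch}_1^3-2\op{ch}_1\op{ch}_2)$ of definition \ref{dfn:chern}, at least up to coboundary. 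Then $\sfL^*\mathfrak{t}=[\Theta|_{\lie{witt}_2}]$ gives the result.

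To build $\Theta$ I would transport the algebraic index theorem of \cite{FFS} through the symbol isomorphism $\sigma\colon\cD_2\cong\cW_2$ of section \ref{s:weyl}. The steps, in order, are as follows.
\begin{enumerate}
\item Replace the formal Weyl algebra of \cite{FFS} by the dg Weyl algebra $\cW_2$. Build the Feigin--Felder--Shoikhet cocycle, now of total degree $2$ (rather than Hochschild degree $2d$), by using the residue $\op{Res}$ of section \ref{s:res} in place of the Berezin integral over $q$. This is the same mechanism by which $\rho$ produces degree‑two classes in definition \ref{dfn:chern}.
\item Pass from this Hochschild/cyclic cocycle to a Lie algebra cocycle on $\cW_2$ via the usual antisymmetrization (Loday--Quillen--Tsygan).
\item Check the normalization directly. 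The elements $z^\1,z^\2,P$ have symbols $q^\1,q^\2,P$, and $P$ is central. The pairing then reduces to $\op{Res}(P\,\d q^\1\d q^\2)=1$, up to a universal combinatorial constant that I will fix by this computation.
\item Restrict to $\lie{witt}_2$. A vector field $T=T^\ii\del_\ii$ has symbol $T^\ii(q)p^\ii-\frac12\del_\ii T^\ii$. The \cite{FFS} restriction theorem for the linear/Jacobian part then identifies the restricted cocycle with the degree‑$6$ part of $\hat A\cdot e^{\op{ch}_1/2}$ built from the Jacobian cocycle $\lie{c}$. The shift $-\tfrac12\op{Tr}JT$ coming from $\sigma$ is exactly what turns $\hat A$ into $\op{Td}=\hat A\,e^{c_1/2}$.
\end{enumerate}
Expanding to degree $6$ should give $\frac1{48}(\op{ch}_1^3-2\op{ch}_1\op{ch}_2)$, using the identity $\op{Td}_3=\frac1{24}c_1c_2$ rewritten in Chern characters.

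The main obstacle is the restriction step: the \cite{FFS} theorem is proved for the formal Weyl algebra with a flat $\lie{gl}$‑connection, and here it must be made to work in the Jouanolou dg setting, with $\dbar$ and the residue, while keeping track of all signs and normalizations. As an independent check, and as a fallback if this transport becomes unwieldy, I would use that $\dim\HH^2_{Lie}(\lie{witt}_2)=2$ by theorem \ref{thm:2dgf}, together with the explicit test cycles $\mathbf{X},\tilde{\mathbf{X}}$, which detect classes because their pairing matrix is nondegenerate. For those cycles it suffices to prove
\[
\sfL^*\mathfrak{t}(\mathbf{X})=\tfrac1{48}(-12-24)=-\tfrac34,
\qquad
\sfL^*\mathfrak{t}(\tilde{\mathbf{X}})=\tfrac1{48}(-4-24)=-\tfrac7{12}.
\]
To compute the left‑hand sides, I would push $\mathbf{X}$ and $\tilde{\mathbf{X}}$ into $\cD_2^{poly}$. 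I would then run the explicit reductions from the proof of theorem \ref{thm:diffops}, which bring any cycle to a multiple of $P\wedge z^\1\wedge z^\2$, and read off the coefficient.
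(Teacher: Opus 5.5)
Your plan is essentially the paper's own proof. You factor $\sfL$ through $\cD_2$ and use theorem \ref{thm:diffops} with its corollary to reduce to a cocycle on $\cD_2$ normalized on $[P\wedge z^\1\wedge z^\2]$; you take that cocycle to be the residue version of the Feigin--Felder--Shoikhet trace pulled back along $\sigma$; and you obtain $\op{Td}|_6=\frac1{48}(\op{ch}_1^3-2\op{ch}_1\op{ch}_2)$ from a one-loop expansion in which the $-\frac12\op{div}$ term of the symbol supplies $e^{\op{ch}_1/2}$ (the paper does this restriction step by an explicit Feynman-graph and Bernoulli-number computation rather than by citing the FFS restriction theorem, and your fallback values $-\frac34$, $-\frac7{12}$ agree with the paper's pairings). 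One small correction: $P$ is not central in $\cD_2$, since for example $[\del_\1,P]=\del_\1 P=-2x^\1P\neq 0$; the normalization reduces to $\op{Res}(P\,\d q^\1\d q^\2)$ because the symbols $q^\1,q^\2,P$ are all independent of $p$, so every $\Pi$-contraction vanishes.
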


\subsection{Differential operator formulation} 

We sketch our method of proof of theorem \ref{thm:grr2} before getting into details.
The main idea is to involve the (dg) algebra of differential operators $\cD_d$ on punctured affine space $\AA^d - \{0\}
$, see section \ref{s:weyl}.
By construction, $\sfL \colon \lie{witt}_d \to \op{End} \cJ_d$ factors through differential operators
\begin{equation}\label{}
  \lie{witt}_d \xto{i} \cD_d \xto{\til \sfL} \op{End} \cJ_d 
\end{equation}
where the first map simply views vector fields acting by first-order differential operators and the second map is the natural inclusion.
From here, we will use theorem \ref{thm:diffops} together with the existence of an algebraic trace in the spirit of
\cite{FFS}.
Indeed, in the next section we will define the ``residue trace" on our Jouanolou model for differential operators as a degree two class 
\begin{equation}\label{eq:trD}
  \op{ResTr}_{\cD}^{S^1} \in \HH^2_{Lie}(\cD_{d}) . 
\end{equation}
Consider the commutative diagram 
\begin{equation}\label{eq:diagram}
  \begin{tikzcd}
    \HH_2^{Lie}(\lie{witt}_d) \ar[rr,"\sfL_*"] \ar[dr,"i_*"] & & \HH_2^{Lie}(\op{End} \cJ_d) \ar[r,"\tau","\cong"'] & \C \\
                                                                     & \HH_2^{Lie}(\cD_d) \ar[rr,"\op{ResTr}^{S^1}_{\cD}",
    "\cong"'] \ar[ur,"\til \sfL_*"] & &
                                                                 \C 
                                                                 \end{tikzcd}
                                                               \end{equation}
By definition $\tau([z^1 \wedge z^2 \wedge \cdots \wedge z^d \wedge P]) = 1$
where $\tau$ is the universal Tate class of \cite{FHK}.
We will also show that $\op{ResTr}_{\cD}^{S^1}([z^1 \wedge \cdots \wedge z^d \wedge P]) = 1$.

\begin{theorem}\label{thm:grrD}
The composition 
\begin{equation}\label{}
  \HH_2^{Lie}(\lie{witt}_d) \xto{i_*} \HH_2^{Lie}(\cD_d) \xto{\op{ResTr}_{\cD}^{S^1}} \C .
\end{equation}
is $\op{Td}|_{2d+2}$.
\end{theorem}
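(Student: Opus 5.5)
The plan is to follow the Feigin--Felder--Shoikhet strategy for the algebraic index theorem, transported through the symbol map $\sigma\colon \cD_d \xto{\cong} \cW_d$ of section \ref{s:weyl}.

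\textbf{Step 1: build the residue trace on the Weyl side.} We first construct $\op{ResTr}^{S^1}_{\cD}$ as an explicit Lie cocycle on $\cW_d$. The natural route is the Hochschild cocycle of \cite{FFS} for the Weyl algebra, suitably relativized. In the $q$-directions we replace the integration over $\R^{2d}$ by the Jouanolou residue $\op{Res}$ of section \ref{s:res}. The $p$-directions are handled by the formal FFS trace.

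Pulled back along the map $\cW_d\to \lie{gl}_\infty$, this gives a degree-two Lie class. We then check that it is a cocycle, using:
\begin{itemize}
  \item Lemma \ref{lem:rho}, which gives $\lie{witt}_d$-invariance of $\rho$;
  \item the cyclic-cocycle property of $\rho$;
  \item Lemma \ref{Lem:KillbOp}.
\end{itemize}
As a normalization, we evaluate it on the cycle $[z^\1\wedge\cdots\wedge z^\ddd\wedge P]$ and get $1$. In the symbols $q^\ii$ and $P$ this reduces to $\op{Res}(P\,\d^dz)=1$.

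\textbf{Step 2: restrict along $i$ and localize.} Next we pull back along $i\colon \lie{witt}_d\to\cD_d$. The key point is the symbol of a vector field:
\[
\sigma(T^\ii\del_\ii)=T^\ii(q)p^\ii-\tfrac12\del_\ii T^\ii(q),
\]
which is linear in $p$. The FFS cocycle evaluated on elements linear in $p$ is a Gaussian/Wick computation in the $p$-variables. It produces a universal polynomial in the Jacobian matrices $JT_k$ and their $\del$-derivatives.

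This is exactly the structure of the cocycles $\op{Tr}(\lie{c}_k)$ and the $*$-products in Definition \ref{dfn:chern}, once the remaining $q$-integral is replaced by $\rho$. Following the standard FFS argument, we identify the resulting class with the $\hat A$-type, or Todd, genus of the $\lie{gl}_d$-valued "curvature" $\lie{c}$. The $-\tfrac12\del_\ii T^\ii$ shift accounts for the passage from $\hat A$ to $\op{Td}$, via the factor $e^{\op{ch}_1/2}$. Taking the degree $2d+2$ component gives $\op{Td}|_{2d+2}$ as a polynomial in $\op{ch}_1,\dots,\op{ch}_d$.

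\textbf{Step 3: pin down the constants (for $d=2$).} Since Theorem \ref{thm:chern} only makes the comparison rigid for $d=1,2$, for $d=2$ we can argue more cheaply, avoiding a full transgression. We evaluate both sides on the two test cycles $\mathbf{X},\tilde{\mathbf{X}}$ of the injectivity proposition. The values of $\op{ch}_1^3$ and $\op{ch}_1\op{ch}_2$ on these cycles are already known, and that $2\times 2$ matrix is invertible. So it suffices to compute
\[
\op{ResTr}^{S^1}_{\cD}\bigl(i_*\mathbf{X}\bigr)\quad\text{and}\quad \op{ResTr}^{S^1}_{\cD}\bigl(i_*\tilde{\mathbf{X}}\bigr)
\]
and check that they match $\tfrac1{48}(\op{ch}_1^3-2\op{ch}_1\op{ch}_2)$, namely $\tfrac1{48}(-12-24)$ and $\tfrac1{48}(-4-24)$.

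\textbf{Main obstacle.} The hardest step is Step 1: writing the relative FFS cocycle with the Jouanolou residue and proving it is closed for the total differential $\dbar+\d^{\rm Lie}$. In particular, the $\dbar$-terms coming from $P$ must cancel against the Hochschild boundary. I would first try to define it as $\rho$ composed with the FFS trace density, viewed as an element of $\br C^\lambda_\bu$. Closedness would then follow formally from Lemma \ref{Lem:KillbOp}, exactly as in Theorem \ref{thm:weakchern}.
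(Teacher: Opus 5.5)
\textbf{The gap is in Step 2.} Your Steps 1--2 follow the paper's route: an FFS/GLX-type trace on $\cW_d$ with the Jouanolou residue in the $q$-directions, pulled back along $\sigma$, then a Wick expansion using that $\sigma(\xi)=\xi^i(q)p^i-\tfrac12\op{div}\xi$ is linear in $p$, with the divergence term producing $e^{\op{ch}_1/2}$. The problem is that Step 2, which is the whole content of the theorem, is only asserted via ``the standard FFS argument''. That argument does not transfer verbatim. FFS pin down their class through relative Lie cohomology and Chern--Weil on $\lie{sp}_{2n}\oplus\lie{gl}_N$. Here the inputs are arbitrary $\cJ_d$-valued vector fields, so the identification has to be made at the cochain level.

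Concretely, your claim that the Wick expansion yields a polynomial in the Jacobians $J\xi$ is not true term by term. An edge of $\Pi$ puts $\del_q$ on one vertex and $\del_p$ on the other. Since $p=dp=0$ at the end, a vertex whose $p$-linear part is used absorbs exactly one $\del_p$ but arbitrarily many $\del_q$'s. So the graphs are cycles with trees attached, or trees rooted at a divergence vertex. Tree leaves contribute undifferentiated $\xi^i$'s contracted against higher derivatives, which are not Jacobian expressions. What is needed, and what the paper's proof supplies, is the following.
(i) After the sum over orderings coming from the Lie antisymmetrization, a leaf's angle enters only through one factor $\SP$, and $\int_{S^1}\SP=0$. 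So only disjoint oriented cycles (each vertex contributing an entry of $J\xi$) and isolated vertices carrying $-\tfrac12\op{div}\xi$ survive; this is the one-loop exactness.
(ii) The same symmetrization turns $\int_{S^1_{\mathrm{cyc}}[d+1]}$ into torus integrals. An $n$-cycle then has weight $\int_{(S^1)^n}\SP(\theta_1-\theta_2)\cdots\SP(\theta_n-\theta_1)=-B_n/n!$ for $n$ even and $0$ for $n$ odd.
(iii) With symmetry factors $1/l!$ for divergence vertices and $1/(i_n!\,n^{i_n})$ for cycles, the sum exponentiates to $\exp\bigl(\tfrac12\op{ch}_1-\sum_{r}\tfrac{B_{2r}}{2r}\op{ch}_{2r}\bigr)$. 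This requires matching the cycle traces with the $*$-product cochains of Definition~\ref{dfn:chern}.

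\textbf{Step 3 is a valid reduction but not carried out.} For $d=2$, $\HH^2_{Lie}(\lie{witt}_2)$ is spanned by $\op{ch}_1^3,\op{ch}_1\op{ch}_2$, and their pairing matrix with $\mathbf X,\tilde{\mathbf X}$ is invertible, so a class is determined by its two values. Since the pulled-back trace is $3$-linear, only $\mathbb X,\tilde{\mathbb X}$ contribute. Your targets $-\tfrac{36}{48}$ and $-\tfrac{28}{48}$ are correct. But you never compute $\op{ResTr}^{S^1}_{\cD}(i_*\mathbf X)$ or $\op{ResTr}^{S^1}_{\cD}(i_*\tilde{\mathbf X})$. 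Doing so requires the same three-point Wick and configuration-space integrals as (i)--(ii). It also says nothing for $d>2$, which the statement covers.

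\textbf{Your flagged ``main obstacle'' is misplaced, and your route for it fails.} The cocycle is constructed before this theorem, so its closedness is not part of it. Your suggested proof of closedness would also not work. Lemma~\ref{Lem:KillbOp} and the $*$-product live on cyclic chains of the commutative algebra $\cJ_d$, whereas the Hochschild differential of $\cW_d$ uses the Moyal product. Lemma~\ref{lem:rho} gives only $\lie{witt}_d$-invariance, not a cocycle on all of $\cD_d$. The paper instead uses a Stokes argument on $S^1_{\mathrm{cyc}}[d+2]$ together with $[d^{S^1}_{\mathrm{DR}}-\triangle,\Pi+D]=0$.
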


If, in addition, we know that $\HH_2^{Lie}(\cD_d) \cong \C$ and that $\til \sfL_*$ is an isomorphism, then from the commutative
diagram above we see that this theorem
immediately implies the $\cV=\cJ_d$ version of conjecture \ref{conj}.
This result is well-known when $d=1$, see \cite[Theorem.~2]{li1998central}.
In the present paper, we have only extended this result about Lie algebra cohomology of $\cD_d$ to dimension $d=2$, see theorem \ref{thm:diffops}. 
In the next section, we prove theorem \ref{thm:grrD} for general dimension $d$ and hence theorem \ref{thm:grr2} follows. 
We will return to the computation of $\HH_2^{Lie}(\cD_d)$, and hence the proof of the general form of conjecture
\ref{conj} in future work.

\subsection{Algebraic residue trace}

As we have already mentioned, this class \eqref{eq:trD} is inspired by the formal algebraic trace map of \cite{FFS}.
We introduce the dg Weyl algebra $\cW_d$ associated to the punctured formal disk, see section \ref{s:weyl} for definitions and
notations.
There is a dg algebra isomorphism from $\mathcal{D}_{d}$ to $\mathcal{W}_{d}$ given by the symbol map
\[
\sigma\left(f(z^{\1},...,z^{\dd})F(\partial_{z^{\1}},...,\partial_{z^{\dd}})\right) \define e^{-\frac{1}{2}\sum^d\limits_{\s=1}\partial_{p^{\s}}\partial_{q^{\s}}}(f(q^{\1},...,q^{\dd})F(p^{\1},...,p^{\dd})).
\]
In particular
\[
\sigma\left(T\right)=T-\frac{1}{2}\mathrm{div}(T)
\]
where $T\in \lie{witt}_d \subset \cD_d$ and $\op{div}(\sum_i f^i \del_i) = \sum_i \del_i f^i \in \cJ_d$.

The residue trace is most fundamentally defined at the level of cyclic cohomology.
It is a \textit{non-commutative} version of the cyclic class $\rho \in \HH_\lambda^1(\cJ_d)$ of \cite{FHK}. We follow the presentation in \cite[3]{GLX}, which gives a convenient setting to generalize \cite{FFS} to our case.

\begin{definition}
    Define the residue trace cochain
    \[
      \op{ResTr}^{S^1}_\lambda \in  \overline{C}_{\lambda}^{\bu}(\mathcal{W}_d)^1
  \]
   of total degree $+1$ by the formula
\begin{multline}
  \mathrm{ResTr}_\lambda^{S^1}\left(O_0\otimes \cdots\otimes O_k\right) =
  \mathrm{Res}_{q} \left({\int_{S^1_{\mathrm{cyc}}[k+1]}\mathbf{Mult}\left(e^{\Pi+D}(O_0\cdot d\theta_0\otimes O_1\otimes\cdots\otimes O_k)\right)}|_{p=0}\right),
\end{multline}
    where:
    \begin{itemize}
    \item[(i)] the operators $\Pi$ and $D$ are as follows:
      \[
\Pi\define \frac{1}{2}\sum_{i<j}\pi^*_{ij}\SP\cdot \Pi_{ij},\quad \Pi_{ij}:=\sum^d_{\s=1}(\partial_{q^{\s}})_i\otimes(\partial_{p^{\s}})_j-(\partial_{p^{\s}})_i\otimes(\partial_{q^{\s}})_j,
\]
\[
D(O_0\cdot d\theta_0\otimes O_1\otimes\cdots\otimes O_k)=\sum^{k}_{i=1}O_0\cdot d\theta_0\otimes \cdots\otimes D_i(O_i)\otimes\cdots\otimes O_k,
\]
    \[
    D_i(O_i)\define \sum^d_{\s=1}{\partial_{q^{\s}}}(O_i)dq^{\s}\cdot d\theta_i+\sum^d_{\s=1}{\partial_{p^{\s}}}(O_i)dp^{\s}\cdot d\theta_i.
  \]
      \item[(ii)] Let $\mathrm{Cyc}_{S^1}[k+1]$ be the configuration space of $k+1$ anti-clockwise cyclic ordered points on the circle $S^1$. We have a natural identification
      \[
      \mathrm{Cyc}_{S^1}[k+1]\simeq S^1\times \mathring{\Delta}_{k}
      \]
      \[
      (\theta_0,\dots,\theta_k)\rightarrow \{p_0\}\times (\theta_{0,1},\theta_{1,2},\dots,\theta_{k,0}).
      \]
      Here we denote $\Delta_k=\{(\theta_{0,1},\theta_{1,2},\dots,\theta_{k-1,k},\theta_{k,0})\in [0,1]^{k+1}| \theta_{0,1}+\theta_{1,2}+\dots+\theta_{k,0}=1\}$ viewed as a manifold with corners and $\mathring{\Delta}_{k}$ its interior. This allows us to compactify $\mathrm{Cyc}_{S^1}[k+1]$ by $S^1\times\Delta_{k}$, which will be denoted by $S^1_{\mathrm{cyc}}[k+1]$.  
      \item[(iii)] For $0\leq i<j\leq k$, we have the fogetful map
      \[
\pi_{ij}:      S^1_{\mathrm{cyc}}[k+1]\rightarrow       S^1_{\mathrm{cyc}}[2]
      \]
      \[
      (\theta_0,\dots,\theta_k)\rightarrow (\theta_i,\theta_j).
      \]
    Recall  that $S^1_{\mathrm{cyc}}[2]\simeq S^1\times \Delta_1=S^1\times [0,1]$. The function $\SP$ on $S^1_{\mathrm{cyc}}[2]$ is define to be $u-\frac{1}{2}$ where $u\in \Delta_1=[0,1]$.
    
      \item[(iv)] The notation $\mathbf{Mult}\left(-\right)|_{p=0}$ means that we first multiply together the tensor components and then set $p^{\s}=dp^{\s}=0$.
      \item[(v)] $\op{Res}_q$ is the same Jouanolou residue introduced in section \ref{s:res} except we use the
        variable $q$ instead of $z$.
      \end{itemize}
\end{definition}
From the definition, $\mathrm{ResTr}^{S^1}$ is zero except $k=d$.

Since the residue $\mathrm{Res}$ annihilates holomorphic total derivatives we see immediately that $\op{Tr}^{S^1}$ is
well-defined: 
\[
\op{ResTr}_\lambda^{S^1}\left(O_0\otimes \cdots\otimes O_k\right)= (-1)^{(|O_0|-1)\cdot(\sum^k\limits_{i=1}|O_i|-1)}\cdot
\mathrm{ResTr}_\lambda^{S^1}\left(O_1\otimes \cdots\otimes O_k\otimes O_0\right)
\]

Next, we show that $\op{ResTr}_\lambda^{S^1}$ is a cocycle.
\begin{lemma}
    $\op{ResTr}_\lambda^{S^1}$ is closed for both Jouanolou and Hochschild differentials.
\end{lemma}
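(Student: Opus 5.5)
The plan is to check the two closedness statements separately. Both reduce to properties of the configuration-space integral over $S^1_{\mathrm{cyc}}[k+1]$ and of the residue $\op{Res}_q$, following the pattern of the Feigin--Felder--Shoikhet trace in \cite{FFS} as presented in \cite[3]{GLX}.

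\emph{Jouanolou differential.} The differential $\dbar$ acts only on the $\cJ_d$-coefficients, through $x^\ii$ and $\dbar x^\ii$. It commutes with $\partial_{q^\s}$ and $\partial_{p^\s}$, since the derivatives $\partial_{q^\s}$ act on $\cJ_d$ by the rule $\del_\ii x^\jj=-x^\ii x^\jj$, which is compatible with $\dbar$. Hence $\dbar$ commutes with $\Pi$, with $D$, with $\mathbf{Mult}$, and with setting $p=0$. The integration over $S^1_{\mathrm{cyc}}[k+1]$ involves only the $\theta$ and $u$ variables, so it also commutes with $\dbar$ up to the Koszul sign. Therefore $\op{ResTr}^{S^1}_\lambda\circ\dbar$ equals $\pm\op{Res}_q\circ\dbar(\cdots)$, and this vanishes because $\op{Res}$ is a cochain map to $\C[-d+1]$ and so annihilates $\dbar$-exact elements.

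\emph{Hochschild differential.} Here the plan is Stokes' theorem on the compactified configuration space $S^1\times\Delta_k$. I would first show that the integrand $\omega=\mathbf{Mult}(e^{\Pi+D}(\cdots))$ is closed for the total de Rham differential $d_\theta+d_u$ up to holomorphic total derivatives in $q$. This works because $D$ produces $dq,dp$ paired with $d\theta_i$, while $d\SP=du$ paired with $\Pi_{ij}$ reproduces exactly the derivative terms. This is the same cancellation as in the FFS/GLX construction, with $\Pi_{ij}$ and $D_i$ combining into a flat connection-type form. Any $dq$-total derivative is killed by $\op{Res}_q$, which annihilates holomorphic total derivatives (Lemma~\ref{Lem:KillbOp} and its proof). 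Then $0=\int d\omega=\int_{\partial}\omega$. The boundary faces of $\Delta_k$ are $\theta_{i,i+1}=0$, where two consecutive points collide. On such a face $\pi_{i,i+1}^*\SP=\pm\tfrac12$, and $e^{\frac12\Pi_{i,i+1}}$ evaluated there gives exactly the Moyal product $O_i\star O_{i+1}$; the face $\theta_{k,0}=0$ gives $O_k\star O_0$ with the cyclic sign. Summing the faces with their orientation signs should give $\op{ResTr}^{S^1}_\lambda(b(O_0\otimes\cdots\otimes O_k))$. The collision of points $i,i+1$ leaves the remaining propagators $\SP$ restricted correctly, because $\pi_{j,i}$ and $\pi_{j,i+1}$ agree on that face.

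\emph{Main obstacle.} The hard step is the closedness of the integrand modulo total $q$-derivatives, together with the sign and degree bookkeeping on the boundary. The Jouanolou grading interacts with the form degrees $d\theta_i$, $dq$, and $dp$, and the face $\theta_{k,0}$ has to reproduce the cyclic term of $b$ with the sign stated in the cyclicity formula above. For this I would work with the $GL_d$-equivariant, $\dbar$-compatible structure and argue as in \cite[3]{GLX}. The identity needed is $d_{\mathrm{dR}}\,e^{\Pi+D}=e^{\Pi+D}\,(\text{terms of the form }\partial_{q}(\cdot)dq)$ after setting $p=dp=0$. To get it, I would first check the $k=1$ case by hand and then use that $\Pi$ is a sum of pairwise terms to reduce the general case to it.
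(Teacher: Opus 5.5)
Your proposal is correct and follows essentially the same route as the paper. The Jouanolou part is the same observation that $\dbar$ commutes with $\partial_{q^{s}}$ and $\partial_{p^{s}}$, and hence with $\Pi$, $D$ and $\mathbf{Mult}$, so that $\op{Res}$ kills the $\dbar$-exact terms; the Hochschild part is the same Stokes argument on $S^1_{\mathrm{cyc}}[d+2]$, where the collision faces produce $b$ and the bulk term $\int d^{S^1}_{\mathrm{DR}}\omega$ is killed by $\op{Res}_q$. The paper packages your ``closed up to total $q$-derivatives'' step as the identity $[d^{S^1}_{\mathrm{DR}}-\mathbf{\triangle},\Pi+D]=0$ from \cite[lemma 3.6]{GLX}, with $\mathbf{\triangle}=\sum_{s}\mathscr{L}_{\partial_{q^{s}}}\iota_{\partial_{p^{s}}}-\mathscr{L}_{\partial_{p^{s}}}\iota_{\partial_{q^{s}}}$, so that $d^{S^1}_{\mathrm{DR}}\omega=\mathbf{\triangle}\omega$ acts on the already multiplied form; note that the $\mathscr{L}_{\partial_{p}}\iota_{\partial_{q}}$ piece is not a total derivative but vanishes because it would need $d+1$ factors of $dq$.
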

\begin{proof}
  Since the residue map $\mathrm{Res}$ vanishes on Jouanolou exact forms and the operator $(\partial_{q^{\s}})_i
  \otimes (\partial_{p^{\s}})_j$ commutes with the Jouanolou differential, it follows that the trace $\mathrm{ResTr}^{S^1}$ also vanishes on Jouanolou boundaries. 

Employing integration by parts, we obtain
\begin{align*}
&\mathrm{ResTr}_\lambda^{S^1}\bigl(b(O_0\otimes \cdots\otimes O_{d+1})\bigr)\\
&=\mathrm{Res}_{q}\left(\left.{\int_{S^1_{\mathrm{cyc}}[d+2]}d^{S^1}_{\mathrm{DR}}\Bigl(\mathbf{Mult}\Bigl(e^{\Pi}\Bigl(\sum^{d+1}_{i=1}O_0d\theta_0\otimes D(O_1)\otimes\cdots\otimes O_i\otimes\cdots\otimes D(O_{d+1})\Bigr)\Bigr)\Bigr)}\right|_{p=0}\right)\\
&=\mathrm{Res}_{q} \left({\int_{S^1_{\mathrm{cyc}}[k+2]}d^{S^1}_{\mathrm{DR}}\mathbf{Mult}\left(e^{\Pi+D}(O_0\cdot d\theta_0\otimes O_1\otimes\cdots\otimes O_{d+1}\right)}|_{p=0}\right).
\end{align*}
We introduce the following operator
\[
\mathbf{\triangle} \define \sum_{\s=1}^d \mathscr{L}_{\partial_{q^{\s}}}\iota_{\partial_{p^{\s}}}-\sum_{\s=1}^d
\mathscr{L}_{\partial_{p^{\s}}}\iota_{\partial_{q^{\s}}}
\]
where $\sL$ denotes the Lie derivative.
By the same argument in \cite[lemma 3.6]{GLX}, we have
$$
[d^{S^1}_{\mathrm{DR}}-\mathbf{\triangle},\Pi+D]=0,
$$
\[
(d^{S^1}_{\mathrm{DR}}-\mathbf{\triangle})\left(O_0\cdot d\theta_0\otimes O_1\otimes\cdots\otimes O_{d+1}\right)=0.
\]
Thus,
\begin{align*}
    &\mathrm{Res}_{q} \left({\int_{S^1_{\mathrm{cyc}}[k+2]}d^{S^1}_{\mathrm{DR}}\mathbf{Mult}\left(e^{\Pi+D}(O_0\cdot d\theta_0\otimes O_1\otimes\cdots\otimes O_{d+1}\right)}|_{p=0}\right)\\
    &=\mathrm{Res}_{q} \left({\int_{S^1_{\mathrm{cyc}}[k+2]}\mathbf{\triangle}\circ \mathbf{Mult}\left(e^{\Pi+D}(O_0\cdot d\theta_0\otimes O_1\otimes\cdots\otimes O_{d+1}\right)}|_{p=0}\right)\\
    &=0,
\end{align*}
here we use that fact that $\mathrm{Res}_{q}$ annihilates holomorphic total derivatives and at most $d$ copy of $dq^{\s}$.
\end{proof}

Define the cocycle 
\begin{equation}\label{}
  \op{ResTr}^{S^1} \in C_{Lie}^\bu(\cW_d)^2
\end{equation}
of total degree two to be the image of $\op{ResTr}_\lambda^{S^1}$ under  
\begin{equation}\label{}
  C_\lambda^\bu(\cW_d )[1] \to C_{Lie}^\bu(\cW_d)
\end{equation}
And let $\op{ResTr}_{\cD}^{S^1}$ denote the further twist of this map by the symbol:
\begin{equation}\label{}
  \op{ResTr}^{S^1}_{\cD} \define \sigma^* \op{ResTr}^{S^1}. 
\end{equation}

\subsection{One-loop calculation}

Recall the logarithmic Todd polynomial is
\[
\log(\mathrm{Todd}(y))=\frac{y}{2}-\sum_{m\geq 1}\frac{B_{2m}}{2m}y^m
\]
where $B_{2m}$ is $2m$th Bernoulli number.
We proceed to the proof of theorem \ref{thm:grrD}.
\begin{theorem*}[\ref{thm:grrD}]
  One has 
  \begin{equation}\label{}
    \op{ResTr}_{\cD}^{S^1}|_{\lie{witt}_d} = \op{Td}|_{2d+2}
  \end{equation}
\end{theorem*}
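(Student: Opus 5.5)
We compute the pullback of $\op{ResTr}^{S^1}$ along $\sigma\circ i\colon \lie{witt}_d\to\cW_d$ at the cochain level, and then match the resulting cocycle with the universal Chern characters of Definition~\ref{dfn:chern}.

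\emph{Restriction to vector fields.} For $T\in\lie{witt}_d$ we have $\sigma(T)=T^\s p^\s-\tfrac12\op{div}(T)$, which is at most linear in $p$. Since $\op{ResTr}^{S^1}_\lambda$ is only nonzero on $(d+1)$-chains, we evaluate it on $\sigma(T_0)\otimes\cdots\otimes\sigma(T_d)$. In $\mathbf{Mult}(e^{\Pi+D}(\cdots))|_{p=0}$ every $p$ must be removed, either by a $\partial_p$ inside some $\Pi_{ij}$ or by a $dp$ from $D$, which is then set to zero. With each factor linear in $p$, the surviving terms are indexed by graphs on the $d+1$ marked points of $S^1_{\mathrm{cyc}}[d+1]$. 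In these graphs every vertex has at most one outgoing $p$-edge, while $q$-derivatives (Jacobian entries $\partial_{q^\s}T^\rrr$) attach arbitrarily. The only component types that survive are the following:
\begin{itemize}
\item[(a)] isolated vertices, which contribute $-\tfrac12\op{div}$ factors via $\sigma$, or $q$-derivatives via $D$, producing $\partial(\cdot)$ in the residue;
\item[(b)] oriented wheels $JT_{i_1}\cdots JT_{i_m}$, weighted by the integral over the configuration space of the product of the propagators $\SP(\theta_{i_a},\theta_{i_{a+1}})$.
\end{itemize}
The $D$-insertions supply exactly the $d$ holomorphic one-forms $dq$ that the residue needs. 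So every term has the shape $\op{Res}(f_0\,\partial f_1\cdots\partial f_d)$, where each $f_a$ is a product of traces of Jacobian words. By the definition of $\op{Tr}(\lie{c}_m)$ and of the $*$-product (the cyclic twist $B$ accounts for which vertex carries $d\theta_0$), each such term is $\rho$ applied to a $*$-product of $\op{Tr}(\lie{c}_m)$'s.

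\emph{Identifying the weights.} Following \cite{FFS} and \cite[3]{GLX}, the wheel with $m$ vertices integrates, after summing over orderings, to a multiple of $\op{Tr}(\lie{c}_m)$ times a universal constant. I would compute that constant as $\int$ of the cyclic product of $(u-\tfrac12)$-propagators. This is the standard Bernoulli integral, giving $-\tfrac{B_m}{m}\cdot(\text{normalization})$ for even $m$, with the odd wheels ($m\ge3$) vanishing by the $u\mapsto 1-u$ symmetry. The $m=1$ pieces combine the $-\tfrac12\op{div}$ from $\sigma$ with the one-vertex loop to give the $\tfrac y2$ term. Disconnected unions of wheels exponentiate, because the integration over $S^1\times\Delta_d$ factorizes up to the combinatorics already encoded in $*$ and in the $(k+1)^{-1}$ symmetrization in $\til{\lie{c}}$. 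The total is therefore $\rho$ applied to the degree-$(2d+2)$ part of $\exp\bigl(\tfrac12\op{ch}_1-\sum_m\tfrac{B_{2m}}{2m}\op{ch}_{2m}\cdots\bigr)$. By the logarithmic Todd formula recalled above, this is $\op{Td}|_{2d+2}$, expressed in the classes $\op{ch}_1,\dots,\op{ch}_{d+1}$ of Definition~\ref{dfn:chern}.

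\emph{Main obstacle.} The hard step is the second one: exact bookkeeping of the normalizations. Three things have to match on the nose: the factor $(-1)^k/2^k$ in $\til{\lie{c}}_{k+1}$, the shuffle and cyclic signs in $*$, and the factorization of configuration integrals into products of wheel integrals. The last point is delicate because the points lie on a single circle, not on independent circles. My first attempt would be to prove the factorization by pushing forward along the forgetful maps $\pi_{ij}$ and using that $\int_{\Delta}$ of a product over disjoint vertex sets reduces to convolution integrals of $u-\tfrac12$ on $S^1$. I would then fix the overall constants by checking the case $d=1$ against the known $\tfrac1{12}c_1^2$ (cf.\ \cite{li1998central}), and check $d=2$ against the explicit cocycles $\op{ch}_1^3,\op{ch}_1\op{ch}_2$ evaluated on the chains $\mathbf X,\tilde{\mathbf X}$ constructed above.
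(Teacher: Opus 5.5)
Your proposal is essentially the paper's proof: a Wick/Feynman expansion of $\mathrm{ResTr}^{S^1}$ on $\sigma(\xi_0)\otimes\cdots\otimes\sigma(\xi_d)$ into one-loop wheels plus $-\tfrac12\mathrm{div}$ insertions at isolated vertices, wheel weights $I_{2r}=-B_{2r}/(2r)!$, and reassembly into $\log\mathrm{Td}$. The paper handles the factorization you flag as the main obstacle differently, without pushforwards along the $\pi_{ij}$ or low-dimensional normalization checks: it moves the sign $(\pm)_\tau$ onto the algebraic factor, so that $\sum_{\tau\in S_{d+1}}\int_{S^1_{\mathrm{cyc}}[d+1-l]}$ becomes a single integral over the torus $(S^1)^{d+1-l}$, which splits by Fubini into products of wheel integrals (in that form graphs with a univalent vertex also vanish, since $\int_0^1(u-\tfrac12)\,du=0$, and this justifies your restriction to wheels and isolated vertices).
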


\begin{proof}
  The set up is similar to \cite{FFS} and \cite{GLX}, the major difference being that we twist by the symbol map (the
  secondary difference is that we
  use the residue instead of formal geometry).
  The key idea is that we can express the exponential in the style of Wick expansion which leads to a Feynman
  diagrammatic expansion for the residue trace.
  The next observation is that since we consider vector fields, as opposed to arbitrary order differential operators,
  only one-loop graphs appear in this expansion.

\begin{figure}[htbp]
    \centering
\begin{tikzpicture}[
    x=0.75pt,
    y=0.75pt,
    yscale=-1,
    xscale=1,
    line cap=round,
    line join=round,
    every node/.style={inner sep=0.75pt}
]

\tikzset{every picture/.style={line width=0.9pt}}

\definecolor{ptgreen}{RGB}{87,166,96}
\definecolor{ptpurple}{RGB}{137,92,191}
\definecolor{pathred}{RGB}{184,52,71}
\definecolor{pathorange}{RGB}{214,148,53}
\definecolor{framegray}{RGB}{55,61,68}
\definecolor{ellipsefill}{RGB}{250,250,251}
\definecolor{labelgray}{RGB}{38,43,48}

\tikzset{
  orbit/.style={draw=framegray, line width=1.15pt},
  redwiggle/.style={draw=pathred, line width=1.02pt, dash pattern={on 0.85pt off 0.85pt}, opacity=0.95},
  orangewiggle/.style={draw=pathorange, line width=1.02pt, dash pattern={on 0.85pt off 0.85pt}, opacity=0.95},
  greenpt/.style={draw=white, line width=0.95pt, fill=ptgreen},
  purplept/.style={draw=white, line width=0.95pt, fill=ptpurple},
  lab/.style={font=\scriptsize, text=labelgray, fill=white, fill opacity=0.96, text opacity=1, rounded corners=1.2pt, inner xsep=1.7pt, inner ysep=0.9pt},
  annobox/.style={draw=black!10, fill=white, rounded corners=2pt, inner xsep=7pt, inner ysep=4.5pt, text=labelgray, align=left}
}

\path[fill=ellipsefill]   (161.6,153.39) .. controls (145.57,124.52) and (156.06,88.17) .. (185.03,72.19) .. controls (214.01,56.22) and (250.5,66.67) .. (266.53,95.54) .. controls (282.57,124.41) and (272.08,160.76) .. (243.1,176.74) .. controls (214.13,192.71) and (177.64,182.26) .. (161.6,153.39) -- cycle ;
\draw[orbit]   (161.6,153.39) .. controls (145.57,124.52) and (156.06,88.17) .. (185.03,72.19) .. controls (214.01,56.22) and (250.5,66.67) .. (266.53,95.54) .. controls (282.57,124.41) and (272.08,160.76) .. (243.1,176.74) .. controls (214.13,192.71) and (177.64,182.26) .. (161.6,153.39) -- cycle ;

\filldraw[greenpt]   (177.35,77.98) circle [radius=3.75pt];
\filldraw[greenpt]   (274.17,117.7) circle [radius=3.75pt];
\filldraw[purplept] (203.8,65.82)  circle [radius=3.75pt];
\filldraw[purplept] (248.41,75.69) circle [radius=3.75pt];
\filldraw[purplept] (269.21,147.3) circle [radius=3.75pt];
\filldraw[purplept] (186.44,178.17) circle [radius=3.75pt];

\draw[redwiggle]  (177.35,77.98) .. controls (179.32,79.33) and (179.7,81.03) .. (178.51,83.08) .. controls (177.47,85.27) and (178.04,86.87) .. (180.22,87.9) .. controls (182.41,88.71) and (183.15,90.22) .. (182.43,92.44) .. controls (181.87,94.77) and (182.76,96.19) .. (185.09,96.7) .. controls (187.4,97.03) and (188.43,98.35) .. (188.18,100.67) .. controls (188.09,103.07) and (189.24,104.3) .. (191.64,104.37) .. controls (193.99,104.29) and (195.26,105.42) .. (195.43,107.77) .. controls (195.41,109.89) and (196.6,110.81) .. (199,110.52) .. controls (201.34,110.11) and (202.78,111.06) .. (203.31,113.38) .. controls (203.98,115.71) and (205.49,116.56) .. (207.84,115.95) .. controls (210.12,115.23) and (211.49,115.9) .. (211.95,117.95) .. controls (212.91,120.19) and (214.52,120.86) .. (216.78,119.97) .. controls (218.96,118.99) and (220.6,119.56) .. (221.7,121.69) .. controls (222.91,123.79) and (224.36,124.21) .. (226.06,122.94) .. controls (228.1,121.73) and (229.77,122.11) .. (231.06,124.09) .. controls (232.45,126.04) and (234.11,126.33) .. (236.04,124.94) .. controls (237.87,123.49) and (239.51,123.67) .. (240.96,125.48) .. controls (242.49,127.25) and (244.1,127.32) .. (245.77,125.71) .. controls (247.69,124.04) and (249.44,124) .. (251.02,125.6) .. controls (252.69,127.15) and (254.35,126.98) .. (256.02,125.1) .. controls (257.47,123.16) and (259.04,122.86) .. (260.73,124.19) .. controls (262.86,125.34) and (264.47,124.85) .. (265.54,122.72) .. controls (266.32,120.59) and (267.88,119.85) .. (270.23,120.52) .. controls (272.44,121.08) and (273.76,120.14) .. (274.17,117.7) -- (274.17,117.7) ;

\draw[redwiggle]  (177.35,77.98) .. controls (179.44,76.32) and (181.3,76.35) .. (182.92,78.07) .. controls (184.46,79.82) and (186.1,79.92) .. (187.85,78.37) .. controls (189.66,76.86) and (191.29,77.03) .. (192.75,78.88) .. controls (194.14,80.74) and (195.75,80.97) .. (197.6,79.57) .. controls (199.51,78.2) and (201.11,78.49) .. (202.41,80.44) .. controls (203.64,82.4) and (205.23,82.75) .. (207.16,81.48) .. controls (209.13,80.25) and (210.89,80.7) .. (212.42,82.83) .. controls (213.49,84.87) and (215.02,85.32) .. (217.01,84.19) .. controls (219.04,83.09) and (220.55,83.59) .. (221.52,85.68) .. controls (222.79,87.9) and (224.44,88.5) .. (226.48,87.49) .. controls (228.54,86.52) and (230.15,87.17) .. (231.3,89.45) .. controls (232.03,91.58) and (233.41,92.19) .. (235.46,91.29) .. controls (237.88,90.58) and (239.39,91.3) .. (240,93.45) .. controls (240.87,95.76) and (242.32,96.52) .. (244.37,95.71) .. controls (246.75,95.12) and (248.3,95.98) .. (249.01,98.31) .. controls (249.63,100.62) and (251.09,101.5) .. (253.4,100.97) .. controls (255.71,100.48) and (257.09,101.38) .. (257.54,103.68) .. controls (257.88,105.95) and (259.16,106.87) .. (261.39,106.42) .. controls (263.86,106.2) and (265.17,107.2) .. (265.3,109.43) .. controls (265.53,111.79) and (266.8,112.88) .. (269.12,112.7) .. controls (271.61,112.75) and (272.81,113.9) .. (272.72,116.15) -- (274.17,117.7) ;

\draw[orangewiggle]  (248.41,75.69) .. controls (249.52,73.03) and (251.15,72.04) .. (253.3,72.73) .. controls (255.35,73.54) and (256.65,72.88) .. (257.2,70.76) .. controls (258.52,68.38) and (260.13,67.72) .. (262.04,68.77) .. controls (264.44,69.73) and (266.04,69.23) .. (266.83,67.28) .. controls (268.4,65.21) and (269.97,64.87) .. (271.56,66.27) .. controls (273.61,67.66) and (275.46,67.45) .. (277.12,65.64) .. controls (278.31,63.93) and (279.82,63.91) .. (281.65,65.59) .. controls (283.28,67.36) and (285.03,67.52) .. (286.92,66.06) .. controls (288.95,64.72) and (290.65,65.06) .. (292,67.07) .. controls (293.15,69.12) and (294.5,69.53) .. (296.06,68.28) .. controls (298.26,67.33) and (299.81,67.95) .. (300.71,70.16) .. controls (301.42,72.37) and (303.11,73.28) .. (305.78,72.89) .. controls (308.09,72.4) and (309.43,73.31) .. (309.8,75.63) .. controls (309.99,77.91) and (311.21,78.93) .. (313.48,78.7) .. controls (315.8,78.63) and (316.9,79.76) .. (316.79,82.07) .. controls (316.5,84.3) and (317.63,85.72) .. (320.17,86.31) .. controls (322.42,86.68) and (323.23,87.97) .. (322.6,90.16) .. controls (322.01,92.65) and (322.77,94.21) .. (324.87,94.86) .. controls (326.96,95.69) and (327.5,97.31) .. (326.48,99.71) .. controls (325.17,101.44) and (325.47,103.09) .. (327.39,104.66) .. controls (329.17,106.01) and (329.23,107.66) .. (327.56,109.63) .. controls (325.79,110.85) and (325.59,112.49) .. (326.94,114.56) .. controls (328.19,116.41) and (327.71,118.01) .. (325.48,119.38) .. controls (323.39,120) and (322.61,121.55) .. (323.14,124.04) .. controls (323.64,126.29) and (322.73,127.56) .. (320.41,127.84) .. controls (318.12,127.91) and (316.97,129.11) .. (316.97,131.44) .. controls (316.68,133.87) and (315.29,134.99) .. (312.8,134.78) .. controls (310.85,134.07) and (309.5,134.93) .. (308.74,137.34) .. controls (308.33,139.46) and (306.79,140.24) .. (304.14,139.67) .. controls (302.17,138.7) and (300.81,139.26) .. (300.06,141.35) .. controls (298.37,143.7) and (296.49,144.32) .. (294.44,143.21) .. controls (292.41,142.04) and (290.77,142.46) .. (289.52,144.49) .. controls (288.11,146.5) and (286.8,146.77) .. (285.57,145.31) .. controls (283.47,143.99) and (281.61,144.3) .. (279.98,146.23) .. controls (278.2,148.14) and (276.71,148.32) .. (275.51,146.77) .. controls (273.31,145.3) and (271.75,145.44) .. (270.82,147.19) -- (269.21,147.3) ;

\draw[orangewiggle]  (248.41,75.69) .. controls (246.15,73.12) and (245.66,70.94) .. (246.94,69.14) .. controls (248.18,67.32) and (247.81,65.96) .. (245.82,65.06) .. controls (243.83,64.26) and (243.21,62.36) .. (243.94,59.35) .. controls (245.04,57.55) and (244.58,56.37) .. (242.56,55.82) .. controls (240.55,55.37) and (239.8,53.73) .. (240.31,50.91) .. controls (241.24,49.1) and (240.42,47.61) .. (237.84,46.46) .. controls (235.86,46.45) and (234.97,45.12) .. (235.18,42.47) .. controls (235.87,40.62) and (234.92,39.44) .. (232.33,38.91) .. controls (229.82,38.67) and (228.46,37.31) .. (228.27,34.84) .. controls (228.62,32.95) and (227.54,32.09) .. (225.03,32.26) .. controls (222.66,32.69) and (221.15,31.75) .. (220.5,29.45) .. controls (219.61,27.16) and (218.03,26.46) .. (215.75,27.33) .. controls (213.64,28.38) and (211.99,27.89) .. (210.8,25.85) .. controls (209.37,23.88) and (208.1,23.65) .. (206.97,25.16) .. controls (205.12,26.64) and (203.38,26.51) .. (201.75,24.77) .. controls (199.92,23.12) and (198.13,23.18) .. (196.4,24.95) .. controls (194.82,26.8) and (193.01,27.04) .. (190.98,25.68) .. controls (189.73,24.22) and (188.36,24.52) .. (186.87,26.57) .. controls (185.5,28.67) and (183.67,29.21) .. (181.36,28.19) .. controls (179.88,26.94) and (178.5,27.45) .. (177.23,29.72) .. controls (176.06,32.01) and (174.69,32.61) .. (173.1,31.5) .. controls (170.53,30.89) and (168.7,31.8) .. (167.63,34.24) .. controls (167.54,36.19) and (166.19,36.97) .. (163.58,36.56) .. controls (160.89,36.27) and (159.56,37.12) .. (159.57,39.1) .. controls (158.78,41.67) and (157.46,42.58) .. (155.63,41.84) .. controls (152.89,41.8) and (151.6,42.77) .. (151.77,44.76) .. controls (151.15,47.41) and (149.89,48.44) .. (147.99,47.87) .. controls (145.23,48.07) and (143.6,49.53) .. (143.11,52.26) .. controls (143.48,54.21) and (142.3,55.37) .. (139.59,55.73) .. controls (137.62,55.37) and (136.48,56.57) .. (136.19,59.33) .. controls (136.7,61.26) and (135.62,62.5) .. (132.94,63.06) .. controls (130.95,62.85) and (129.91,64.14) .. (129.84,66.91) .. controls (130.51,68.78) and (129.53,70.1) .. (126.91,70.85) .. controls (124.91,70.8) and (123.99,72.14) .. (124.16,74.89) .. controls (124.98,76.69) and (124.12,78.06) .. (121.59,79) .. controls (119.61,79.09) and (118.83,80.49) .. (119.24,83.18) .. controls (119.77,85.8) and (119.05,87.21) .. (117.1,87.41) .. controls (114.71,88.64) and (114.07,90.06) .. (115.18,91.68) .. controls (115.98,94.19) and (115.26,96.1) .. (113.01,97.43) .. controls (111.11,97.88) and (110.67,99.32) .. (111.68,101.75) .. controls (112.81,104.05) and (112.46,105.49) .. (110.62,106.07) .. controls (108.61,107.72) and (108.29,109.63) .. (109.65,111.82) .. controls (111.2,112.94) and (111.08,114.37) .. (109.27,116.1) .. controls (107.55,117.99) and (107.54,119.87) .. (109.25,121.75) .. controls (110.97,122.56) and (111.09,123.96) .. (109.61,125.93) .. controls (108.26,128.06) and (108.6,129.88) .. (110.63,131.39) .. controls (112.72,132.68) and (113.12,134.01) .. (111.81,135.39) .. controls (110.93,137.77) and (111.65,139.5) .. (113.96,140.58) .. controls (116.3,141.43) and (116.99,142.69) .. (116.02,144.34) .. controls (115.72,146.92) and (116.84,148.53) .. (119.39,149.17) .. controls (121.94,149.6) and (122.94,150.76) .. (122.4,152.63) .. controls (122.75,155.33) and (123.9,156.43) .. (125.84,155.94) .. controls (128.56,156.04) and (129.85,157.09) .. (129.71,159.09) .. controls (130.68,161.81) and (132.12,162.8) .. (134.04,162.06) .. controls (136.96,161.87) and (138.56,162.8) .. (138.84,164.85) .. controls (139.22,166.93) and (140.98,167.79) .. (144.11,167.44) .. controls (146,166.45) and (147.26,166.99) .. (147.89,169.05) .. controls (148.62,171.12) and (149.95,171.63) .. (151.9,170.57) .. controls (153.82,169.47) and (155.96,170.16) .. (158.33,172.65) .. controls (159.39,174.68) and (160.91,175.1) .. (162.9,173.91) .. controls (164.87,172.68) and (166.48,173.07) .. (167.71,175.06) .. controls (169.04,177.05) and (170.72,177.39) .. (172.76,176.09) .. controls (174.79,174.77) and (176.55,175.08) .. (178.05,177.01) .. controls (179.64,178.94) and (180.55,179.08) .. (180.78,177.43) .. controls (182.87,176.04) and (184.75,176.28) .. (186.44,178.17) -- (186.44,178.17) ;

\draw[orangewiggle]  (203.8,65.82) .. controls (205.87,67.41) and (206.21,69.09) .. (204.82,70.88) .. controls (203.36,72.62) and (203.55,74.39) .. (205.39,76.18) .. controls (207.14,77.48) and (207.21,79.06) .. (205.61,80.93) .. controls (203.96,82.79) and (203.97,84.48) .. (205.62,86.01) .. controls (207.25,87.63) and (207.2,89.21) .. (205.47,90.75) .. controls (203.72,92.28) and (203.62,93.93) .. (205.18,95.68) .. controls (206.71,97.5) and (206.56,99.19) .. (204.75,100.76) .. controls (202.92,102.31) and (202.73,104.04) .. (204.2,105.96) .. controls (205.64,107.93) and (205.42,109.69) .. (203.54,111.24) .. controls (201.7,112.34) and (201.48,113.9) .. (202.87,115.91) .. controls (204.24,117.96) and (203.99,119.52) .. (202.13,120.6) .. controls (200.18,122.1) and (199.87,123.88) .. (201.2,125.95) .. controls (202.59,127.59) and (202.29,129.14) .. (200.32,130.61) .. controls (198.33,132.04) and (197.98,133.79) .. (199.25,135.86) .. controls (200.6,137.51) and (200.27,139.02) .. (198.26,140.38) .. controls (196.25,141.69) and (195.85,143.37) .. (197.08,145.42) .. controls (198.29,147.47) and (197.89,149.09) .. (195.86,150.3) .. controls (193.84,151.45) and (193.42,153.02) .. (194.61,154.99) .. controls (195.79,156.96) and (195.31,158.62) .. (193.17,159.99) .. controls (191.15,160.91) and (190.66,162.47) .. (191.71,164.66) .. controls (192.75,166.82) and (192.21,168.4) .. (190.09,169.4) .. controls (188,170.25) and (187.41,171.78) .. (188.32,173.99) -- (186.44,178.17) ;

\draw[orangewiggle]  (203.8,65.82) .. controls (206.29,65.58) and (207.73,66.69) .. (208.14,69.16) .. controls (208.13,71.37) and (209.36,72.49) .. (211.83,72.53) .. controls (214.08,72.43) and (215.25,73.65) .. (215.33,76.18) .. controls (215.06,78.4) and (216.05,79.55) .. (218.3,79.64) .. controls (220.81,80.13) and (221.87,81.49) .. (221.48,83.74) .. controls (221.03,85.96) and (222.05,87.38) .. (224.52,88.01) .. controls (226.79,88.39) and (227.65,89.7) .. (227.12,91.93) .. controls (226.55,94.14) and (227.5,95.64) .. (229.95,96.42) .. controls (232.19,96.91) and (233,98.26) .. (232.38,100.47) .. controls (231.74,102.66) and (232.63,104.18) .. (235.05,105.03) .. controls (237.28,105.56) and (238.05,106.91) .. (237.38,109.08) .. controls (236.7,111.25) and (237.56,112.75) .. (239.95,113.58) .. controls (242.15,114.06) and (243,115.53) .. (242.51,117.98) .. controls (241.83,120.11) and (242.68,121.54) .. (245.06,122.25) .. controls (247.23,122.6) and (248.09,123.96) .. (247.62,126.34) .. controls (247.19,128.72) and (248.15,130.15) .. (250.5,130.62) .. controls (252.81,130.98) and (253.8,132.3) .. (253.45,134.58) .. controls (253.18,136.89) and (254.19,138.08) .. (256.48,138.16) .. controls (258.89,138.27) and (260.16,139.5) .. (260.27,141.87) .. controls (260.58,144.26) and (261.9,145.26) .. (264.25,144.86) .. controls (266.4,144.17) and (267.81,144.89) .. (268.48,147.02) -- (269.21,147.3) ;

\path (151.8,57.6) node [lab, anchor=north west] {$\mathcal{O}_{5}$};
\path (193.5,41.8) node [lab, anchor=north west] {$\mathcal{O}_{0}$};
\path (250.6,51.6) node [lab, anchor=north west, rotate=-9] {$\mathcal{O}_{1}$};
\path (287.4,113.4) node [lab, anchor=north west] {$\mathcal{O}_{2}$};
\path (278.1,156.9) node [lab, anchor=north west] {$\mathcal{O}_{3}$};
\path (164.0,184.4) node [lab, anchor=north west] {$\mathcal{O}_{4}$};

\path (377,97.5) node [lab, anchor=north west, font=\small] {One term in $\mathrm{ch}_{2} \ast \mathrm{ch}_{4}$};

\end{tikzpicture}
    \caption{Feynman graph integral example}
    \label{fig:ch2ch4}
\end{figure}

Suppose that $\xi_0, \ldots, \xi_d \in \lie{witt}_d$.
This graph expansion takes the following form
\begin{multline}
  \mathrm{ResTr}^{S^1}\left(\xi_0\wedge \cdots\wedge \xi_d\right) \\=
  \sum_{\permute\in S_{d+1}}\mathrm{Res}_{q} \left({\int_{S^1_{\mathrm{cyc}}[k+1]}\mathbf{Mult}\left(e^{\Pi+D}(\sigma(\xi_{\permute(0)})\cdot d\theta_0\otimes \sigma(\xi_{\permute(1)})\otimes\cdots\otimes \sigma(\xi_{\permute(d)}))\right)}|_{p=0}\right)\\
  =\sum_{\permute\in S_{d+1}}(\pm)_{\permute}\cdot\sum_{\substack{l ;\Gamma_1,\dots,\Gamma_k:\\
  |\Gamma_1|+\cdots |\Gamma_k|=d+1-l}} \frac{\int_{S^1_{\mathrm{cyc}}[d+1-l]}\omega^{\tau}_{\Gamma_1}\cdots\omega^{\tau}_{\Gamma_k} }{\#\op{Aut}(\cup^k_{i=1}\Gamma_i)}\cdot \rho\left(\tr^{\mathrm{c}_1^l}_{\Gamma_1,\dots,\Gamma_k}\left(\xi_{\permute(0)}\otimes\cdots \otimes\xi_{\permute(d)}\right)\right),
\end{multline}

We explain the notations and conventions:
\begin{itemize}
  \item[(i)] The inside sum is over integers $0 \leq l \leq d+1$ and graphs $\Gamma_1,\ldots,\Gamma_k$ that are connected, disjoint one-loop diagrams.
    Each one-loop diagram $\Gamma_i$ induces a cyclic ordering on the set of vertices denoted 
\[
  V(\Gamma_i)=\{j^i_1,\cdots,j^i_{|\Gamma_i|}\} 
\]
\item[(ii)] The differential forms which appear in the integral over $S^1_{cyc}[d+1-l]$ are defined by
\[
  \omega^{\tau}_{\Gamma_i} \define (-\pi^*_{\tau^{-1}(j^{i}_1)\tau^{-1}(j^{i}_2)}\SP)\cdots (-\pi^*_{\tau^{-1}(j^{i}_{|\Gamma_i|-1})\tau^{-1}(j^{i}_{|\Gamma_i|})}\SP)\cdot (-\pi^*_{\tau^{-1}(j^{i}_{|\Gamma_i|})\tau^{-1}(j^{i}_{1})}\SP)
\]
\item[(iii)] The trace symbol which appears inside $\rho$ is defined by
\[
\tr^{\mathrm{c}_1^l}_{\Gamma_1,\dots,\Gamma_k}\left(\xi_{\permute(0)}\otimes\cdots \otimes\xi_{\permute(d)}\right)
\]
\[
\define \tr_{\Gamma_1}\cdots \tr_{\Gamma_k}(\sigma(J\xi_{\permute(0)})\otimes\cdots \otimes (-\frac{1}{2}\mathrm{div}(\xi_{s_1}))\otimes \cdots  \otimes (-\frac{1}{2}\mathrm{div}(\xi_{s_l}))\otimes\cdots\otimes \sigma(J\xi_{\permute(d)}))
\]
where $\tau$ is a permuation such that
\[
\{\tau^{-1}(s_1),\dots,\tau^{-1}(s_l)\}=\{0,\dots,d\}-\cup^{k}_{i=1}V(\Gamma_i)
\]
and
\[
\tr_{\Gamma_1}\cdots \tr_{\Gamma_k}(M_{\permute(0)} \otimes a_{\permute(0)})\otimes\cdots \otimes (\mathrm{Id}\otimes a_{s_1})\otimes \cdots  \otimes (\mathrm{Id}\otimes a_{s_l})\otimes\ldots\otimes (M_{\permute(d)} \otimes a_{\permute(d)})
\]
\[
=\mathrm{Tr}(M_{j^1_1}\cdots M_{j^1_{|\Gamma_1|}})\cdots \mathrm{Tr}(M_{j^k_1}\cdots M_{j^k_{|\Gamma_k|}})\cdot \left(a_{\permute(0)}\otimes\cdots \otimes a_{\permute(d)}\right).
\]
\end{itemize}

Together with the Koszul rule of signs of the permuation $\tau$, the algebraic part of weight expression we have that
\[
(\pm)_{\permute}\cdot\rho\left(\tr^{\mathrm{c}_1^l}_{\Gamma_1,\dots,\Gamma_k}\left(\xi_{\permute(0)}\otimes\cdots \otimes\xi_{\permute(d)}\right)\right)=\rho\left(\tr^{\mathrm{c}_1^l}_{\Gamma_1,\dots,\Gamma_k}\left(\xi_{0}\otimes\cdots \otimes\xi_{d}\right)\right)
\]
Thus
\begin{multline}
 \sum_{\permute\in S_{d+1}}(\pm)_{\permute}\cdot\sum_{\substack{\Gamma_1,\dots,\Gamma_k:\\
  |\Gamma_1|+\cdots |\Gamma_k|=d+1-l}} \frac{\int_{S^1_{\mathrm{cyc}}[d+1-l]}\omega^{\tau}_{\Gamma_1}\cdots\omega^{\tau}_{\Gamma_k} }{\#\op{Aut}(\cup^k_{i=1}\Gamma_i)}\cdot \rho\left(\tr^{\mathrm{c}_1^l}_{\Gamma_1,\dots,\Gamma_k}\left(\xi_{\permute(0)}\otimes\cdots \otimes\xi_{\permute(d)}\right)\right)\\
 = \sum\limits_{\substack{\Gamma_1,\dots,\Gamma_k:\\
  |\Gamma_1|+\cdots |\Gamma_k|=d+1-l}} \frac{ \sum\limits_{\permute\in S_{d+1}}\int_{S^1_{\mathrm{cyc}}[d+1-l]}\omega^{\tau}_{\Gamma_1}\cdots\omega^{\tau}_{\Gamma_k} }{\#\op{Aut}(\cup^k_{i=1}\Gamma_i)}\cdot \rho\left(\tr^{\mathrm{c}_1^l}_{\Gamma_1,\dots,\Gamma_k}\left(\xi_{0}\otimes\cdots \otimes\xi_{d}\right)\right) \\
  = \sum\limits_{\substack{\Gamma_1,\dots,\Gamma_k:\\
  |\Gamma_1|+\cdots |\Gamma_k|=d+1-l}} \frac{ \int_{(S^1)^{|\Gamma_1|}}\omega_{\Gamma_1}\cdots\int_{(S^1)^{|\Gamma_k|}}\omega_{\Gamma_k} }{\#\op{Aut}(\cup^k_{i=1}\Gamma_i)}\cdot \rho\left(\tr^{\mathrm{c}_1^l}_{\Gamma_1,\dots,\Gamma_k}\left(\xi_{0}\otimes\cdots \otimes\xi_{d}\right)\right) ,
\end{multline}
Here we use the fact that the sum integral over $S^1_{\mathrm{cyc}}[d+1-l]$ over \textit{all} permutation $\permute$
simply reduces to an integral over $(S^1)^{d+1-l}$: 
\[
\sum\limits_{\permute\in S_{d+1}}\int_{S^1_{\mathrm{cyc}}[d+1-l]}\omega^{\tau}_{\Gamma_1}\cdots\omega^{\tau}_{\Gamma_k}=\int_{(S^1)^{|\Gamma_1|}}\omega_{\Gamma_1}\cdots\int_{(S^1)^{|\Gamma_k|}}\omega_{\Gamma_k}
\]
where
\[
\omega_{\Gamma_i}=\SP(\theta_{j^{i}_1}-\theta_{j^{i}_2})\cdots \SP(\theta_{j^{i}_{|\Gamma_i|-1}}-\theta_{j^{i}_{|\Gamma_i|}})\cdot \SP(\theta_{j^{i}_{|\Gamma_i|}}-\theta_{j^{i}_{1}}).
\]
\begin{multline}
  \sum\limits_{\substack{\Gamma_1,\dots,\Gamma_k:\\
  |\Gamma_1|+\cdots |\Gamma_k|=d+1-l}} \frac{ \int_{(S^1)^{|\Gamma_1|}}\omega_{\Gamma_1}\cdots\int_{(S^1)^{|\Gamma_k|}}\omega_{\Gamma_k} }{\#\op{Aut}(\cup^k_{i=1}\Gamma_i)}\cdot \rho\left(\tr^{\mathrm{c}_1^l}_{\Gamma_1,\dots,\Gamma_k}\left(\xi_{0}\otimes\cdots \otimes\xi_{d}\right)\right) \\
 = \sum_{  2\cdot i_2+\cdots+ (d+1)\cdot i_{d+1}=d+1-l}\frac{ (\int_{(S^1)^{2}}\omega_{2})^{i_2}\cdots(\int_{(S^1)^{d+1}}\omega_{d+1})^{i_{d+1}} }{\prod^{d+1}\limits_{r\geq 2}(i_r)!(2r)^{i_r}}\cdot\sum\limits_{\substack{\Gamma^r_{s}:2\leq r\leq d+1,1\leq s\leq i_r\\
 |\Gamma^r_1|=\cdots=|\Gamma^r_{i_r}|=r}}  \rho\left(\tr^{\mathrm{c}_1^l}_{\Gamma_1,\dots,\Gamma_k}\left(\xi_{0}\otimes\cdots \otimes\xi_{d}\right)\right)
\end{multline}

Recall, see for example \cite{FFS}, that
\[
I_{2r}=\int_{(S^1)^{2r}}\omega_{2r}=2\cdot (2\pi)^{-2r}(-1)^r\sum^{\infty}_{k=1}\frac{1}{k^{2r}}=2\cdot (2\pi)^{-2r}(-1)^r\cdot (-1)^{r+1}\frac{(2\pi)^{2r}}{2\cdot (2r)!}\cdot B_{2r}=-\frac{B_{2r}}{(2r)!}
\]
Thus, we finally conclude
\begin{align*}
   & \sum_{  2\cdot i_2+\cdots+ (2\lfloor\frac{d+1}{2}\rfloor)\cdot i_{2\lfloor\frac{d+1}{2}\rfloor}=d+1-l}\frac{ (-B_{2})^{i_2}\cdot (-B_{4})^{i_4}\cdots(-B_{2\lfloor\frac{d+1}{2}\rfloor})^{i_{2\lfloor\frac{d+1}{2}\rfloor}} }{l!\cdot \prod^{\lfloor\frac{d+1}{2}\rfloor}\limits_{r\geq 2}i_{2r}!\cdot (2r)^{i_{2r}}}\cdot (\frac{\mathrm{ch}_1}{2})^{l}(\mathrm{ch}_2)^{i_2}(\mathrm{ch}_4)^{i_4}\cdots (\mathrm{ch}_{2\lfloor\frac{d+1}{2}\rfloor})^{2\lfloor\frac{d+1}{2}\rfloor}\\
   & \hspace{5cm} =\op{Td}|_{2d+2}.
\end{align*}

\end{proof}

\newpage 

\printbibliography
\end{document}